\newtheorem{theorem}{\sc Theorem}[section]
\newtheorem{lemma}{\sc Lemma}[section]
\newtheorem{proposition}{\sc Proposition}[section]
\newtheorem{remark}{Remark}
\newcommand{\dis}{\displaystyle}
\newcommand{\eps}{\varepsilon}
\newcommand{\om}{\omega}
\newcommand{\Om}{\Omega}
\newcommand{\ph}{\varphi}
\newcommand{\phbar}{\overline{\varphi}}
\newcommand{\lambar}{\overline{\lambda}}
\newcommand{\psibar}{\overline{\psi}}
\newcommand{\N}{\mbox{$I \kern -4pt N$}}
\newcommand{\Q}{\mbox{$Q \kern -8pt I$}}
\newcommand{\R}{\mbox{$I \kern -4pt R$}}
\newcommand{\C}{\mbox{$C \kern -8pt I$}}
\newcommand{\jnt}{\dis\int}
\newcommand{\jjntQT}{\jnt\!\!\!\!\jnt_{Q_{T}}}
\newcommand{\jjntqT}{\jnt\!\!\!\!\jnt_{q_{T}}}
\providecommand{\tabularnewline}{\\}
\newcommand{
  {\resizebox{}{!}{\input .pstex_t}}
}[2]{
  {\resizebox{#1}{!}{\input #2.pstex_t}}
}
\def\R{{\bf R}}
\title{\textbf{A mixed formulation for the direct approximation of $L^2$-weighted controls for the linear heat equation}}
\author{ 
	\textsc{Arnaud M\"unch}\thanks{Laboratoire de Math\'ematiques, Universit\'e Blaise Pascal (Clermont-Ferrand 2), UMR CNRS 6620, 
	Campus de C\'ezeaux, 63177, Aubi\`ere, France. E-mails: {\tt arnaud.munch@math.univ-bpclermont.fr.}}\quad 
	\and
	\textsc{Diego A. Souza}\thanks{Dpto.\ EDAN, University of Sevilla, 41080~Sevilla, Spain.
	E-mail: {\tt desouza@us.es}. Partially supported by grant~MTM2010-15592 (DGI-MICINN, Spain).}}
\begin{document}
\maketitle


\begin{abstract}
	This paper deals with the numerical computation of null controls for the linear heat equation.
	The goal is to compute approximations of controls that drive the solution from a prescribed initial state to 
	zero at a given positive time. In [Fernandez-Cara \& M\"unch, Strong convergence approximations of null controls for the 1D heat equation, 2013], a so-called primal method is described leading to a strongly convergent approximation of distributed control: the controls minimize quadratic weighted functionals involving both the control and the state and are obtained by solving the corresponding optimality conditions. In this work, we adapt the method to approximate the control of minimal square integrable-weighted norm. The optimality conditions of the problem are reformulated as a mixed formulation involving both the state and its adjoint. We prove the well-posedeness of the mixed formulation (in particular the inf-sup condition) then discuss 
	several numerical experiments. The approach covers both the boundary and the inner situation and is valid in any dimension.
\end{abstract}

\noindent
\textbf{Keywords:} Linear heat equation; Null controllability;  Finite element methods; Mixed formulation.
\vskip 0.25cm

\noindent
\textbf{Mathematics Subject Classification (2010)-}  35K35, 65M12, 93B40, 65K10

\tableofcontents


\section{Introduction. The null controllability problem}
\label{sec:intro}

	Let $\Om\subset \mathbb{R}^N$ be a bounded connected open set whose boundary $\partial\Om$ is regular enough 
	(for instance of class $C^2$). Let $\om\subset \Om$ be a (small) nonempty open subset and assume that $T>0$.  
	In the sequel, for any $\tau > 0$ 
	we denote by $Q_\tau$, $q_{\tau}$ and $\Sigma_{\tau}$ the sets $\Om\times(0, \tau)$, $\omega\times(0,\tau)$ and 
	$\partial\Om\times (0,\tau)$, respectively.

This work is concerned with the null controllability problem for the heat equation
\begin{equation}
\label{eq:heat}
	\left\{
		\begin{array}{lll}
   			y_{t} - \nabla\cdot(c(x) \nabla y) + d(x, t) y= v\,1_{\omega}, 		& \textrm{in}& Q_T,    \\
   			y  = 0,										  		&\textrm{on}& \Sigma_T, \\
   			y(x, 0) = y_0(x), 								  		& \textrm{in}& \Om.
   		\end{array} 
 	\right.
\end{equation}
	Here, we assume that $c:=(c_{i,j})\in C^1(\overline\Om;\mathcal{M}_{N}(\mathbb{R}))$ with $(c(x)\xi,\xi)\geq c_0|\xi|^2$ 
	in~$\overline\Om~(c_0>0)$, $d \in L^\infty(Q_T)$ and $y_0 \in L^2(\Om)$; $v = v(x,t)$ is the \textit{control} (a function 
	in~$L^2(q_T)$) and $y=y(x,t)$ is the associated state. Moreover, $1_{\omega}$ is the characteristic function associated to the set $\omega$.

In the sequel, we shall use the following notation :
\begin{equation}
	L\, y:=y_{t} - \nabla\cdot(c(x) \nabla y) + d(x, t) y, \qquad L^{\star}\ph:=-\ph_{t}- \nabla\cdot(c(x) \nabla \ph)+ d(x,t)\ph. \nonumber
\end{equation}
 
For any $y_0\in L^2(\Omega)$ and $v\in L^2(q_T)$, there exists exactly one solution $y$ to 
	\eqref{eq:heat}, with the regularity $y \in C^0([0, T]; L^2(\Om)) \cap L^2(0, T; H^{1}_0(\Om))$ (see \cite{LionsMagenes,HarauxCazenave}). Accordingly, for any final time 
	$T>0$, the associated null controllability problem at time $T$ is the following\,: for each $y_0\in L^2(\Omega)$, 
	find $v\in L^2(q_T)$ such that the corresponding solution to \eqref{eq:heat} satisfies
\begin{equation}\label{eq:nullT}
	y(\cdot\,,T) = 0 \quad \text{in }\ \Om.
\end{equation}

The controllability of PDEs is an important area of research and has been the subject of many papers in recent years.
Some relevant references are~\cite{LT1} and~\cite{russell78}. In particular, we refer to~\cite{FursikovImanuvilov} and \cite{LebRob} where the null controllability of (\ref{eq:heat}) is proved. 
  
  The numerical approximation is also a fundamental issue, since it is not in general possible to get explicit expression of controls. 
  Due to the strong regularization property of the heat kernel, numerical approximation of controls is a rather delicate issue. The same holds in inverse problems theory 
  when parabolic equations and systems are involved (see \cite{engl}). This has been exhibited numerically in~\cite{carthel} who made use of a duality argument and focused on the control of minimal square integrable norm: the problem reads   
\begin{equation}
\label{P-FI-L2}
\left\{
\begin{array}{l}
\dis \hbox{Minimize }\ J_1(y,v) := {1 \over 2} \jjntqT  |v(x,t)|^2 \,dx\,dt \\
\noalign{\smallskip}
\hbox{Subject to }\ (y,v) \in \mathcal{C}(y_0,T)
\end{array}
\right.
   \end{equation}
where $\mathcal{C}(y_0;T)$ denotes the linear manifold
   $$
\mathcal{C}(y_0;T) := \{\, (y,v) : v \in L^2(q_T),\ \hbox{$y$ solves (\ref{eq:heat})\, and\, satisfies (\ref{eq:nullT})} \,\}.
   $$
The earlier contribution is due to Glowinski and Lions in~\cite{GL96} (updated in \cite{Glo08}) and relies on duality arguments. Duality allows to replace the original constrained minimization problem by an unconstrained and \textit{a priori} easier minimization (dual) problem. The dual problem associated with (\ref{P-FI-L2}) is :
\begin{equation}\label{eq:J}
\min_{\ph_T\in \mathcal{H}} \; J_1^{\star}(\ph_T) := \frac{1}{2} \jjntqT  |\ph(x,t)|^2\, dx dt + \int_{\Omega} y_0(x) \ph(x, 0) dx 
\end{equation}
where the variable $\ph$ solves the backward heat equation : 
\begin{equation}\label{eq:wr}
L^{\star}\ph=0 \quad \textrm{in}\quad Q_T, \qquad \ph=0\quad \textrm{on}\quad \Sigma_T; \quad \ph(\cdot, T)=\ph_T \quad \textrm{in}\quad \Omega,
\end{equation}
and the Hilbert space $\mathcal{H}$ is defined as the completion of $\mathcal{D}(\Omega)$ with respect to the norm $\|\ph_T\|_{\mathcal{H}}:=\|\ph\|_{L^2(q_T)}$. In view of the unique continuation property  to \eqref{eq:wr}, the mapping $\ph_T\mapsto \|\ph_T\|_{\mathcal{H}}$ is a Hilbertian norm 
	in $\mathcal{D}(\Om)$. Hence, we can certainly consider the completion of $\mathcal{D}(\Om)$ for this norm. The coercivity of the functional 
	$J_1^{\star}$ in $\mathcal{H}$ is a consequence of the so-called \textit{observability inequality}
\begin{equation}
\label{eq:obs-ineq}
	\dis \|\ph(\cdot,0)\|^2_{L^2(\Om)}\leq C\jjntqT|\ph(x,t)|^2\,dx\,dt\quad \forall \ph_T\in \mathcal{H},
\end{equation}
where $\ph$	solves (\ref{eq:wr}). This inequality holds  for some constant $C = C(\om, T)$ and, in turn,  is a consequence of some appropriate global Carleman inequalities; 
	see \cite{FursikovImanuvilov}. The minimization of $J_1^{\star}$ is numerically ill-posed, essentially because of the hugeness 
	of the completed space $\mathcal{H}$. The control of minimal square integrable norm highly oscillates near the final time $T$, property which is hard to capture numerically. We refer to \cite{belgacem,kindermann,micu-zuazua,MunchZuazuaRemedies} where this phenomenon is highlighted under several perspectives.
	
Moreover, at the level of the approximation, the minimization of $J_1^{\star}$ requires to find a finite dimensional and conformal approximation of $\mathcal{H}$ such that the corresponding discrete adjoint solution satisfies (\ref{eq:wr}), which is in general impossible for polynomial piecewise approximations. In practice, the trick initially described in~\cite{GL96}, consists first to introduce a discrete and consistent approximation of (\ref{eq:heat}) and then to minimize the corresponding discrete conjugate functional. However, this requires to get some uniform discrete observability inequalities which is a delicate issue, strongly depend on the approximations used (we refer to \cite{boyer,ervedoza_valein,ZuazuaIcm} and the references therein) and is still open in the general case of the heat equation with non constant coefficients. This fact and the hugeness of $\mathcal{H}$ has raised many authors to relax the controllability problem: precisely, the constraint (\ref{eq:nullT}). We mention the references \cite{boyer,carthel,ZuazuaIcm} and notably \cite{boyercanum12,EFC-AM-dual,labbe_trelat} for some numerical realizations. 

In \cite{EFC-AM-sema} (see also \cite{EFC-AM-mcrf} in a semi-linear case), a different - so-called primal approach - allowing more general results has been used and consists to solve directly optimality conditions : specifically, the following general extremal problem (initially introduced by Fursikov and~Ima\-nu\-vi\-lov in~\cite{FursikovImanuvilov}) is considered : 
   \begin{equation}
\label{P-FI}
\left\{
\begin{array}{l}
\dis \hbox{Minimize }\ J(y,v) := {1 \over 2} \jjntQT \rho^2 |y|^2 \,dx\,dt + {1 \over 2} \jjntqT \rho_0^2 |v|^2 \,dx\,dt \\
\noalign{\smallskip}
\hbox{Subject to }\ (y,v) \in \mathcal{C}(y_0,T).
\end{array}
\right.
   \end{equation}
  The weights $\rho=\rho(x,t)$ and $\rho_0=\rho_0(x,t)$ are continuous, uniformly positive and are assumed to belong to $L^\infty(Q_{T-\delta})$ for any $\delta > 0$ (hence, they can blow up as $t \to T^-$). Under those conditions, the extremal problem (\ref{P-FI}) is well-posed (see \cite{EFC-AM-sema}). 
  
  Moreover, the explicit occurrence of the term $y$ in the functional allow to solve directly the optimality conditions associated with (\ref{P-FI}): defining the Hilbert space $P$ as the completion of the linear space $P_0=\{q\in C^{\infty}(\overline{Q_T}): q=0\,\,\textrm{on}\,\, \Sigma_T\}$ with respect to the scalar product 
 \begin{equation}
\left( p, q \right)_P:= \jjntQT \rho^{-2} L^{\star}p \, L^{\star}q \,dx\,dt + \jjntqT \rho_0^{-2} \, p \, q \,dx\,dt, 
\label{IP}
\end{equation}
the optimal pair $(y,v)$ for $J$ is characterized as follows
\begin{equation}\label{eq:yvp}
y = \rho^{-2} L^{\star}p \quad \textrm{in}\quad Q_T, \qquad v = - \rho_0^{-2}p\, 1_{\omega} \quad\textrm{in}\quad Q_T
\end{equation}
in term of an additional variable $p\in P$ unique solution to the following variational equality :
\begin{equation}\label{eq:varp}
(p,q)_P= \int_{\Omega} y_0(x)\,  q(x,0) \,dx, \quad \forall q \in P.
\end{equation}
The well-posedeness of this formulation is ensured as soon as the weights $\rho_0,\rho$ are of Carleman type (in particular $\rho$ and $\rho_0$ blow up exponentially as $t\to T^{-}$); this specific behavior near $T$ reinforces the null controllability requirement and prevents the control of any oscillations near the final time. 

The search of a control $v$ in the manifold 
$\mathcal{C}(y_0,T)$ is reduced to solve the (elliptic) variational formulation (\ref{eq:varp}). In \cite{EFC-AM-sema}, the approximation of (\ref{eq:varp}) is performed 
in the framework of the finite element theory through a discretization of the space-time domain $Q_T$. In practice, an approximation $p_h$ of $p$ is obtained in a direct way by inverting a symmetric positive definite matrix, in contrast with the iterative (and possibly divergent) methods used within dual methods. Moreover, a major advantage of this approach is that a conformal approximation, say $P_h$ of $P$, leads to the strong convergence of $p_h$ toward $p$ in $P$, and consequently from (\ref{eq:yvp}), to a strong convergence in $L^2(q_T)$ of 
$v_h:=- \rho_0^{-2}p_h 1_{\omega}$ toward $v$, a null control for (\ref{eq:heat}). It is worth to mention that, for any $h>0$, $v_h$ is not \textit{a priori} an exact control for any finite dimensional system (which is not necessary at all in practice) but an approximation for the $L^2$-norm of the control $v$. 

The variational formulation (\ref{eq:varp}) derived from the optimality conditions (\ref{eq:yvp}) is obtained assuming that the weights $\rho$ and $\rho_0$ are both strictly positive in $Q_T$ and $q_T$ respectively. In particular, this approach does not apply for the control of minimal $L^2$-norm, for which simply $\rho:=0$ and $\rho_0:=1$. The main reason of the present work is to adapt this approach to cover the case $\rho:=0$ and therefore obtain directly an approximation $v_h$ of the control of some minimal weighted $L^2$-norm. To do so, we adapt the idea developed in \cite{NC-AM-mixedwave} devoted to the wave equation.  
We also mention \cite{munch_EJAM_2014} where a different space-time variational approach (based on Least-squares principles) is used to approximate null controls for the heat equation.

The paper is organized as follows. In Section \ref{mixed_reformulation}, we associate to the dual problem (\ref{eq:J}) an equivalent mixed formulation
which relies on the optimality conditions associated to the problem (\ref{P-FI}) with $\rho=0$. In Section \ref{penalized_one}, we first address the penalization case and write the constraint $L^{\star}\ph=0$ as an equality in $L^2(Q_T)$. We then show the well-posedness of this mixed formulation, in particular we check the inf-sup condition (Theorem \ref{th:mf1}). The mixed formulation allows to approximate simultaneously the dual variable and the primal one, controlled solution of (\ref{eq:heat}). Interestingly, we also derive an equivalent extremal problem in the primal variable $y$ only (see Prop \ref{prop_equiv_dual}, Section \ref{duasectionmf1}). In Section \ref{second_mixed_form}, we reproduce the analysis relaxing the condition $L^{\star}\ph=0$ in the weaker space $L^2(0,T, H^{-1}(\Omega))$. Then, in Section \ref{third_mixed_form}, by using the Global Carleman estimate (\ref{crucial_estimate}), we show that a well-posed mixed formulation is also available for the limit and singular case for which $\eps=0$ leading to Theorem \ref{th:mf3}. Section \ref{numerics} is devoted to the numerical approximation of the mixed formulation (\ref{eq:mf1}) in the case $\eps>0$ (Section \ref{discretization_mf1}) and of the mixed formulation (\ref{eq:mf3}) in the case $\eps=0$ (section \ref{discretization_mf3}). Conformal approximations based on space-time finite elements are employed. In Section \ref{infsuptest}, we numerically check that the approximations used lead to discrete inf-sup properties, uniformly w.r.t. the discretization parameter $h$. Then the remaining of Section \ref{numerics} is devoted to some experiments which emphasize the remarkable robustness of the method. Section \ref{concluding_remarks} concludes with some perspectives.


\section{Control of minimal weighted $L^2$-norm~: mixed reformulations}
\label{mixed_reformulation}

In order to avoid the minimization of the conjugate functional $J^{\star}$ with respect to the final state $\ph_T$ by an iterative process, we now present a direct way 
to approximate the control of minimal square integrable norm in the spirit of the primal approach recalled in the introduction and developed in \cite{EFC-AM-sema}. We adapt the case of the wave equation studied in \cite{NC-AM-mixedwave}.

\subsection{The penalized case: Mixed formulation I} \label{penalized_one}

Let $\rho_{\star}\in \mathbb{R}^+_{\star}$ and let $\rho_0 \in \mathcal{R}$ defined by 
\begin{equation}
\mathcal{R}:=\{w: w\in C(Q_T); w\geq \rho_{\star}>0 \,\, \textrm{in}\,\, Q_T; w\in L^{\infty}(Q_{T-\delta}) \,\, \forall \delta>0\}   \label{def_spaceR}
\end{equation}
so that in particular, the weight $\rho_0$ may blow up as $t\to T^{-}$. We first consider the approximate controllability case. For any $\varepsilon>0$, the problem reads as follows: 
\begin{equation}
\label{P-FI-L2-eps}
\left\{
\begin{array}{l}
\dis \hbox{Minimize }\ J_{\varepsilon}(y,v) := {1 \over 2} \jjntqT \rho_0^2 |v|^2 \,dt  + \frac{1}{2\varepsilon} \Vert y(\cdot,T)\Vert^2_{L^2(\Omega)}\\
\noalign{\smallskip}
\hbox{Subject to }\ (y,v) \in \mathcal{A}(y_0;T)
\end{array}
\right.
   \end{equation}
	where $\varepsilon$ denotes a penalty parameter (see \cite{boyercanum12,carthel,EFC-AM-dual}) and 
	where $\mathcal{A}(y_0;T)$ denotes the linear manifold 
	$\mathcal{A}(y_0;T):= \{\,(y,v) : v \in L^2(q_T),\, \hbox{$y$ solves \eqref{eq:heat}}\,\}$. The corresponding conjugate and well-posed problem is given by 
 \begin{equation}
\label{eq:min_eps_adj}
	\left\{
		\begin{array}{l}
			\dis \hbox{Minimize }\ J^{\star}_\eps(\ph_T):={1\over2} \jjntqT \rho^{-2}_0|\ph(x, t)|^2 dx\,dt 
			+ {\eps\over2}\|\ph_T\|^2_{L^2(\Om)}+(y_0,\ph(\cdot,0))_{L^2(\Omega)}\\
			\noalign{\smallskip}
			\hbox{Subject to }\ \ph_T \in L^2(\Om)
		\end{array}
	\right.
\end{equation}
	where $\ph$ solves \eqref{eq:wr}. 
	
We recall that the penalized problem (\ref{P-FI-L2-eps}) is a consistent approximation of the original null controllability problem, in the sense that its unique solution converges to the solution of \eqref{P-FI} with $\rho=0$ as $\eps\to 0$. We refer for instance to \cite{EFC-AM-dual}, Prop. 3.3 for a proof of the following result, consequence of the null controllability for the heat equation.
\begin{proposition} 
\label{th:limit}  Let $(y_\eps,v_\eps)$ be the solution of Problem \eqref{P-FI-L2-eps} and let $(y,v)$ be the solution of Problem \eqref{P-FI} with $\rho=0$. Then, 
	 one has
	 \[ 
y_\eps\to y~~\hbox{strongly in}~~L^2(Q_T), \quad v_\eps\to v~~\hbox{strongly in}~~L^2(q_T)
\]
	as $\eps\to0^+$.
\end{proposition}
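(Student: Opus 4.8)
The plan is to carry out a standard penalization argument in a weighted $L^2$-setting: the null controllability of \eqref{eq:heat} furnishes an admissible competitor for which the penalty term vanishes, uniform bounds follow, and the identification of the limit is then forced by the strict convexity of the cost. First I would record the a priori bounds. Let $(y,v)$ be the (unique) solution of \eqref{P-FI} with $\rho=0$ and set $m:=\tfrac12\jjntqT\rho_0^2|v|^2\,dx\,dt$; this pair exists because the null controllability of the heat equation makes $\mathcal{C}(y_0;T)$ a nonempty closed affine subspace of the Hilbert space $L^2(q_T;\rho_0^2\,dx\,dt)$, on which $v\mapsto\tfrac12\|v\|^2$ is coercive and strictly convex (recall $\rho_0\ge\rho_\star>0$). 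Since $(y,v)\in\mathcal{A}(y_0;T)$ and $y(\cdot,T)=0$, one has $J_\eps(y,v)=m$, whence
\[
J_\eps(y_\eps,v_\eps)\le m \qquad \forall\,\eps>0 .
\]
This yields $\jjntqT\rho_0^2|v_\eps|^2\,dx\,dt\le 2m$ and $\|y_\eps(\cdot,T)\|_{L^2(\Om)}^2\le 2m\,\eps$, so $\{v_\eps\}$ is bounded in $L^2(q_T;\rho_0^2\,dx\,dt)$ (hence in $L^2(q_T)$) and $y_\eps(\cdot,T)\to0$ strongly in $L^2(\Om)$.

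Next I would pass to the limit and identify it. Along a subsequence, $v_\eps\rightharpoonup v^\star$ weakly in $L^2(q_T;\rho_0^2\,dx\,dt)$, hence weakly in $L^2(q_T)$; by linearity and the continuous dependence of the solution of \eqref{eq:heat} on the control, the associated states $y_\eps$ converge to the state $y^\star$ driven by $v^\star$ (weakly in $L^2(Q_T)$, and in fact strongly in $C^0([0,T];L^2(\Om))$ up to a further subsequence, by the usual parabolic a priori estimates and Aubin--Lions). Since $v\mapsto y(\cdot,T)$ is linear and continuous from $L^2(q_T)$ into $L^2(\Om)$, it is weakly continuous, so $y_\eps(\cdot,T)\rightharpoonup y^\star(\cdot,T)$; together with $y_\eps(\cdot,T)\to0$ this forces $y^\star(\cdot,T)=0$, that is, $(y^\star,v^\star)\in\mathcal{C}(y_0;T)$. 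By weak lower semicontinuity of the norm of $L^2(q_T;\rho_0^2\,dx\,dt)$,
\[
\tfrac12\jjntqT\rho_0^2|v^\star|^2\,dx\,dt \le \liminf_{\eps\to0^+}\tfrac12\jjntqT\rho_0^2|v_\eps|^2\,dx\,dt \le \limsup_{\eps\to0^+}J_\eps(y_\eps,v_\eps) \le m .
\]
Since $m$ is the minimum of $\tfrac12\|\cdot\|^2$ over $\mathcal{C}(y_0;T)$ and $(y^\star,v^\star)$ is admissible with cost at most $m$, strict convexity gives $v^\star=v$ and $y^\star=y$; as the limit does not depend on the extracted subsequence, the whole families converge.

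Finally I would upgrade weak to strong convergence. The displayed chain now reads $m\le\liminf\tfrac12\jjntqT\rho_0^2|v_\eps|^2\,dx\,dt\le\limsup\tfrac12\jjntqT\rho_0^2|v_\eps|^2\,dx\,dt\le m$, i.e.\ the norms of $v_\eps$ converge to the norm of $v$ in the Hilbert space $L^2(q_T;\rho_0^2\,dx\,dt)$; combined with $v_\eps\rightharpoonup v$ this gives $v_\eps\to v$ strongly there, hence strongly in $L^2(q_T)$ since $\rho_0\ge\rho_\star$. Then $y_\eps-y$ solves \eqref{eq:heat} with source $(v_\eps-v)1_\om$ and zero initial datum, so the standard energy estimate gives $y_\eps\to y$ strongly in $C^0([0,T];L^2(\Om))\cap L^2(0,T;H^1_0(\Om))$, in particular in $L^2(Q_T)$.

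The argument is essentially routine; the one point that requires attention is that $\rho_0$ is allowed to blow up as $t\to T^-$, so the natural functional framework is the weighted Hilbert space $L^2(q_T;\rho_0^2\,dx\,dt)$ rather than $L^2(q_T)$. The weak extraction, the lower semicontinuity step, and the ``weak convergence plus convergence of norms implies strong convergence'' step must therefore all be performed in that weighted space, and only afterwards should one invoke $\rho_0\ge\rho_\star>0$ to transfer the conclusions to $L^2(q_T)$. The substantive input --- the existence of the competitor used to bound $J_\eps(y_\eps,v_\eps)$ --- is precisely the null controllability of \eqref{eq:heat}.
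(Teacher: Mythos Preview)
Your argument is correct and is precisely the standard penalization proof; the paper does not supply its own proof of this proposition but simply refers to Prop.~3.3 of \cite{EFC-AM-dual}, where the same scheme (uniform bound from an admissible null-controlling competitor, weak extraction, identification of the limit by strict convexity, and upgrade to strong convergence via convergence of norms) is carried out. One small refinement: when you say that the existence of the competitor is ``precisely the null controllability of \eqref{eq:heat}'', bear in mind that since $\rho_0$ may blow up as $t\to T^-$, mere $L^2(q_T)$-null controllability is not quite enough---one needs a null control lying in the weighted space $L^2(q_T;\rho_0^2\,dx\,dt)$, which is exactly the well-posedness of \eqref{P-FI} with $\rho=0$ asserted (and referenced to \cite{EFC-AM-sema}) just before the statement of the proposition. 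You handle the weighted framework carefully everywhere else, so this is only a matter of wording.
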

	
\subsubsection{Mixed formulation}

Since the variable $\ph$, solution of (\ref{eq:wr}), is completely and uniquely determined by the data $\ph_T$, the main idea of the reformulation is to keep $\ph$ as main variable.

We introduce the linear space $\Phi^0:=\{\ph\in C^2(\overline{Q_T}),\, \ph=0\, \textrm{ on }\, \Sigma_T\}$. For any $\eta>0$, we define the bilinear form 
\[
	(\ph, \phbar)_{\Phi^0} := \jjntqT \!\!\!\rho_0^{-2}\ph \,\phbar\, dx\,dt + \eps(\ph(\cdot,T),\phbar (\cdot,T))_{L^2(\Om)} + \eta  \jjntQT \!\!\!L^{\star}\ph\, L^{\star}\phbar\, dx\,dt , \,~\forall \ph, \phbar \in \Phi^0.
\]	
From the unique continuation property for the heat equation, this bilinear form defines for any $\eps\geq 0$ a scalar product.  For any $\eps>0$, let $\Phi_\eps$ be the completion of $\Phi^0$ for this scalar product.  We denote the norm over $\Phi_\eps$ by $\Vert\cdot\Vert_{\Phi_{\eps}}$ such that 
\begin{equation} 
\label{eq:norm_PHIeps}
	\|\ph\|_{\Phi_\eps}^2 :=\|\rho^{-1}_0\ph\|^2_{L^2(q_T)} + \eps\|\ph(\cdot,T)\|^2_{L^2(\Om)}+ \eta \|L^{\star}\ph\|^2_{L^2(Q_T)}, \quad \forall \ph \in \Phi_\eps.
\end{equation}
%
%
%
%
Finally, we defined the closed subset $W_{\eps}$ of $\Phi_{\eps}$ by 
\[
W_{\eps}=\{\ph\in \Phi_{\eps}: L^{\star}\ph=0 \,\, \textrm{in}\,\, L^2(Q_T)\}
\]
and we endow $W_{\eps}$ with the same norm than $\Phi_{\eps}$.

Then, we define the following extremal problem : 
\begin{equation}
\label{eq:min_eps_refor1}
	\min_{\ph\in W_\eps} \hat{J}^{\star}_\eps(\ph):={1\over2} \jjntqT \rho_0^{-2}|\ph(x, t)|^2 dx\,dt 
	+ {\eps\over2}\|\ph(\cdot,T)\|^2_{L^2(\Om)}+(y_0,\ph(\cdot,0))_{L^2(\Omega)}.
\end{equation}
Standard energy estimates for the heat equation imply that, for any $\ph\in W_{\eps}$, $\ph(\cdot,0)\in L^2(\Omega)$ so that the functional $\hat{J}^{\star}_{\eps}$ is well-defined over $W_{\eps}$. Moreover, since for any $\ph\in W_{\varepsilon}$, $\ph(\cdot,T)$ belongs to $L^2(\Omega)$, Problem \eqref{eq:min_eps_refor1} is equivalent to the minimization problem \eqref{eq:min_eps_adj}. 
As announced, the main variable is now $\ph$ submitted to the constraint equality (in $L^2(Q_T)$) $L^{\star}\ph=0$. This constraint equality is addressed by introducing a Lagrangian multiplier. 	
	
We consider the following mixed formulation~: find $(\ph_\eps,\lambda_\eps)\in \Phi_\eps\times L^2(Q_T)$ solution of 
\begin{equation} 
\label{eq:mf1}
	\left\{
		\begin{array}{rcll}
			\noalign{\smallskip} a_\eps(\ph_\eps, \phbar) + b(\phbar, \lambda_\eps) 
			& = & l(\phbar), & \quad \forall \phbar \in \Phi_\eps \\
			\noalign{\smallskip} b(\ph_\eps, \lambar) & = & 0, & \quad \forall \lambar \in L^2(Q_T),
		\end{array}
	\right.
\end{equation}
	where
\begin{align}
\nonumber
	a_\eps : \Phi_\eps \times \Phi_\eps \to \mathbb{R}, & \quad a_\eps(\ph, \phbar) 
	:= \jjntqT \rho_0^{-2}\ph \,\phbar\, dx\,dt + \eps(\ph(\cdot,T),\phbar (\cdot,T))_{L^2(\Omega)} \\
	\nonumber
	b: \Phi_\eps \times L^{2}(Q_T)  \to \mathbb{R}, & \quad b(\ph, \lambda) := -\jjntQT L^{\star} \ph\, \lambda \,dx\, dt \\
	\nonumber
	l: \Phi_\eps \to \mathbb{R}, & \quad l(\ph) :=- (y_0,\ph(\cdot, 0))_{L^2(\Omega)}.
\end{align}
We have the following result : 
\begin{theorem}
\label{th:mf1}
\begin{enumerate}
	\item The mixed formulation \eqref{eq:mf1} is well-posed.
	\item The unique solution $(\ph_\eps, \lambda_\eps)\in \Phi_\eps\times L^2(Q_T)$ is the unique saddle-point of the Lagrangian 
	         ${\mathcal{L_\eps} : \Phi_\eps \times L^2(Q_T) \to \mathbb{R}}$ defined by
		\begin{equation}
		\label{eq:calL1}
			\mathcal{L_\eps}(\ph, \lambda) := {1\over2}a_\eps(\ph, \ph) + b(\ph, \lambda)  - l(\ph).
		\end{equation}
       
       \item The optimal function $\ph_\eps$ is the minimizer of $\hat{J}^{\star}_\eps$ over $W_\eps$ while the optimal multiplier $\lambda_\eps\in L^2(Q_T)$ 
	is the state of the heat equation \eqref{eq:heat} in the weak sense. 
\end{enumerate}
\end{theorem}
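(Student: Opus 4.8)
The natural framework is the Babu\v{s}ka--Brezzi theory for mixed variational problems, applied to the bilinear forms $a_\eps$, $b$ and the linear form $l$ of \eqref{eq:mf1}. Concretely, I would establish four ingredients: (a) continuity of $a_\eps$ on $\Phi_\eps\times\Phi_\eps$, of $b$ on $\Phi_\eps\times L^2(Q_T)$ and of $l$ on $\Phi_\eps$; (b) coercivity of $a_\eps$ on the kernel $\mathcal{N}(b):=\{\ph\in\Phi_\eps:\ b(\ph,\lambar)=0\ \ \forall\,\lambar\in L^2(Q_T)\}$; (c) the inf--sup (Babu\v{s}ka--Brezzi) condition $\inf_{\lambda\neq 0}\sup_{\ph\neq 0}\dfrac{b(\ph,\lambda)}{\|\ph\|_{\Phi_\eps}\,\|\lambda\|_{L^2(Q_T)}}\ge\delta>0$. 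Granted (a)--(c), Brezzi's theorem yields existence and uniqueness for \eqref{eq:mf1}, that is item~1; items~2 and~3 then follow by exploiting that $a_\eps$ is symmetric and nonnegative and by restricting the two equations of \eqref{eq:mf1} to suitable subspaces of test functions.

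Steps (a) and (b) are short and essentially only use the definition of $\|\cdot\|_{\Phi_\eps}^2$ in \eqref{eq:norm_PHIeps}: the two summands of $a_\eps(\ph,\ph)$ and the quantity $\eta\|L^\star\ph\|_{L^2(Q_T)}^2$ controlling $b(\ph,\cdot)$ are precisely summands of $\|\ph\|_{\Phi_\eps}^2$, so $|a_\eps(\ph,\phbar)|\le\|\ph\|_{\Phi_\eps}\|\phbar\|_{\Phi_\eps}$ and $|b(\ph,\lambda)|\le\eta^{-1/2}\|\ph\|_{\Phi_\eps}\|\lambda\|_{L^2(Q_T)}$. For $l$ the only nontrivial point is that the trace $\ph\mapsto\ph(\cdot,0)$ is well defined and continuous from $\Phi_\eps$ into $L^2(\Om)$; this I would obtain from the standard energy estimate for the backward heat equation, $\|\ph(\cdot,0)\|_{L^2(\Om)}^2\le C\big(\|L^\star\ph\|_{L^2(Q_T)}^2+\|\ph(\cdot,T)\|_{L^2(\Om)}^2\big)\le C_\eps\|\ph\|_{\Phi_\eps}^2$ (for fixed $\eps,\eta>0$), first for $\ph\in\Phi^0$ and then by density in $\Phi_\eps$. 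For (b) the payoff of having built the constraint into the norm appears: $\mathcal{N}(b)=W_\eps$, and for $\ph\in W_\eps$ one has $L^\star\ph=0$, so $a_\eps(\ph,\ph)=\|\rho_0^{-1}\ph\|_{L^2(q_T)}^2+\eps\|\ph(\cdot,T)\|_{L^2(\Om)}^2=\|\ph\|_{\Phi_\eps}^2$; hence $a_\eps$ is coercive on $\mathcal{N}(b)$ with constant $1$.

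The real work --- and the step I expect to be the main obstacle --- is the inf--sup condition (c). Given $\lambda\in L^2(Q_T)$, I would choose $\ph_\lambda$ to be the solution of the backward heat problem $L^\star\ph_\lambda=-\lambda$ in $Q_T$, $\ph_\lambda=0$ on $\Sigma_T$, $\ph_\lambda(\cdot,T)=0$ in $\Om$; by parabolic well-posedness $\ph_\lambda\in C^0([0,T];L^2(\Om))\cap L^2(0,T;H^1_0(\Om))$ with $\|\ph_\lambda\|_{L^2(Q_T)}\le C_T\|\lambda\|_{L^2(Q_T)}$, so the three seminorms in \eqref{eq:norm_PHIeps} are all finite and $\ph_\lambda\in\Phi_\eps$. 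Then $b(\ph_\lambda,\lambda)=-\jjntQT L^\star\ph_\lambda\,\lambda\,dx\,dt=\|\lambda\|_{L^2(Q_T)}^2$, while $\|\ph_\lambda\|_{\Phi_\eps}^2=\|\rho_0^{-1}\ph_\lambda\|_{L^2(q_T)}^2+\eta\|\lambda\|_{L^2(Q_T)}^2\le(\rho_\star^{-2}C_T^2+\eta)\|\lambda\|_{L^2(Q_T)}^2$, which gives (c) with $\delta=(\rho_\star^{-2}C_T^2+\eta)^{-1/2}$. The delicate bookkeeping here is that $\Phi_\eps$ was introduced as an abstract completion of $\Phi^0$; I would make the above rigorous by identifying $\Phi_\eps$, through the isometry $\ph\mapsto(\rho_0^{-1}\ph|_{q_T},\,\sqrt\eps\,\ph(\cdot,T),\,\sqrt\eta\,L^\star\ph)$, with a closed subspace of $L^2(q_T)\times L^2(\Om)\times L^2(Q_T)$, and checking that the $\ph_\lambda$ just built is the limit in that norm of a sequence in $\Phi^0$.

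Once item~1 holds, item~2 is formal: since $a_\eps$ is symmetric and nonnegative, \eqref{eq:mf1} is exactly the stationarity system $\partial_\ph\mathcal{L}_\eps=0$, $\partial_\lambda\mathcal{L}_\eps=0$ of the quadratic--affine Lagrangian \eqref{eq:calL1}, and the coercivity of (b) together with (c) forces this critical point to be the unique saddle point of $\mathcal{L}_\eps$. For item~3, the second equation of \eqref{eq:mf1} gives $L^\star\ph_\eps=0$ in $L^2(Q_T)$, so $\ph_\eps\in W_\eps$; restricting the first equation to $\phbar\in W_\eps$ annihilates $b(\phbar,\lambda_\eps)$ and leaves $a_\eps(\ph_\eps,\phbar)=l(\phbar)$ for all $\phbar\in W_\eps$, which is exactly the Euler--Lagrange equation of $\min_{W_\eps}\hat J^\star_\eps$ (note $\hat J^\star_\eps(\ph)=\tfrac12 a_\eps(\ph,\ph)-l(\ph)$); coercivity of $a_\eps$ on $W_\eps$ makes this minimizer unique, hence $\ph_\eps$ is it. Finally, to identify the multiplier, I would use the first equation of \eqref{eq:mf1} with test functions $\phbar\in\Phi^0$, rewritten as $\jjntQT \lambda_\eps\,L^\star\phbar\,dx\,dt=\jjntqT \rho_0^{-2}\ph_\eps\,\phbar\,dx\,dt+\eps(\ph_\eps(\cdot,T),\phbar(\cdot,T))_{L^2(\Om)}+(y_0,\phbar(\cdot,0))_{L^2(\Om)}$, and compare it, after integration by parts in $Q_T$, with the transposition (weak) formulation of \eqref{eq:heat}: one reads off that $\lambda_\eps$ is the weak solution of \eqref{eq:heat} associated with the control $v=\rho_0^{-2}\ph_\eps\,1_\om\in L^2(q_T)$ (the analogue of \eqref{eq:yvp}) together with the penalized terminal relation $\lambda_\eps(\cdot,T)=-\eps\,\ph_\eps(\cdot,T)$. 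Uniqueness of the weak solution of \eqref{eq:heat} then closes the argument.
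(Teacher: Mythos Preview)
Your proposal is correct and follows essentially the same route as the paper: continuity of $a_\eps$, $b$, $l$ (the last via the backward energy estimate), coercivity of $a_\eps$ on $\mathcal{N}(b)=W_\eps$ where it equals the full norm, and the inf--sup condition via the backward solution $L^\star\ph_\lambda=-\lambda$, $\ph_\lambda(\cdot,T)=0$, yielding the same constant $\delta=(\rho_\star^{-2}C_{\Om,T}+\eta)^{-1/2}$; items~2 and~3 are then derived exactly as you outline. Your remark on making the membership $\ph_\lambda\in\Phi_\eps$ rigorous through the isometric embedding of the completion is a point the paper leaves implicit.
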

\begin{proof} 
	We easily check that the bilinear form $a_\eps$ is continuous over $\Phi_\eps\times \Phi_\eps$, symmetric and positive and that 
	the bilinear form $b_\eps$ is continuous over $\Phi_\eps\times L^2(Q_T)$. Furthermore, for any fixed $\eps$, the continuity of the linear form $l$ 
	over $\Phi_\eps$ can be viewed from the energy estimate : 
\[
		\Vert \ph(\cdot,0)\Vert^2_{L^2(\Om)} \leq C\left(\jjntQT |L^{\star}\ph|^2 dx\,dt + \Vert \ph(\cdot,T)\Vert^2_{L^2(\Om)}\right), 
\quad \forall \ph\in \Phi_\eps,
\]
for some $C>0$, so that $\Vert \ph(\cdot,0)\Vert^2_{L^2(\Omega)} \leq  C \max( \eta^{-1},\eps^{-1}) \Vert \ph\Vert^2_{\Phi_\eps}$. 

	Therefore, the well-posedness of the mixed formulation is a consequence of the following two properties (see \cite{BrezziFortin}):
\begin{itemize}
	 \item
	 	$a_\eps$ is coercive on $\mathcal{N}(b)$, where $\mathcal{N}(b)$ denotes the kernel of $b$~:
		\[
			\mathcal{N}(b) := \{ \ph \in \Phi_\eps~:~b(\ph, \lambda) = 0 \textrm{ for every } \lambda \in L^2(Q_T)\};
		\]
	\item  
		$b$ satisfies the usual ``inf-sup" condition over $\Phi_\eps\times L^2(Q_T)$: there exists $\delta > 0$ such that 
		\begin{equation}
		\label{eq:infsup1}
			\inf_{\lambda \in L^2(Q_T)} \sup_{\ph\in \Phi_\eps} \frac{b(\ph, \lambda)}{\|\ph\|_{\Phi_\eps} \|\lambda\|_{L^2(Q_T)}} \geq \delta.
		\end{equation}
\end{itemize}
	From the definition of $a_\eps$, the first point is clear : for all $\ph\in \mathcal{N}(b)=W_\eps$, 
	$a_\eps(\ph,\ph)=\Vert \ph\Vert^2_{\Phi_\eps}$. Let us check the inf-sup condition. For any fixed $\lambda^0\in L^2(Q_T)$, we 
	define the (unique) element $\ph^0$ of 
	\[
	L^{\star}\ph^0=-\lambda^0 \quad \textrm{in}\quad Q_T, \qquad \ph^0=0\quad \textrm{on}\quad \Sigma_T; \quad \ph^0(\cdot, T)=0 \quad \textrm{in}\quad \Omega,
\]
so that $\ph^0$ solves the backward heat equation with source term $-\lambda^0\in L^2(Q_T)$, null Dirichlet boundary
	condition and zero initial state. Since $-\lambda^0\in L^2(Q_T)$, then using energy estimates, there exists 
	a constant $C_{\Omega,T} > 0$ such that the solution $\ph^0$ of the backward heat equation with source term $\lambda^0$ satisfies the 
	inequality
\[
	 \jjntqT \rho_0^{-2}|\ph^0|^2 dx\,dt \leq \rho_{\star}^{-2}\jjntqT |\ph^0|^2  dx\,dt \leq  \rho_{\star}^{-2}\, C_{\Omega,T}\, \|\lambda^0\|_{L^2(Q_T)}^2. 
\]
	Consequently, $\ph^0\in \Phi_\eps$. In particular, we have $b(\ph^0,\lambda^0)= \Vert \lambda^0\Vert^2_{L^2(Q_T)}$ and
\[
	\sup_{\ph \in \Phi_\eps} \!\!\frac{b(\ph, \lambda^0)}{\|\ph\|_{\Phi_\eps}\! \|\lambda^0\|_{L^2(Q_T)}} 
	\!\geq \!\frac{b(\ph^0, \lambda^0)}{\|\ph^0\|_{\Phi_\eps} \!\|\lambda^0\|_{L^2(Q_T)}} 
	\!= \!\frac{\|\lambda^0\|_{L^2(Q_T)}^2}
	{( \|\rho^{-1}_0\ph^0\|^2_{L^2(q_T)}\!+\!\eta \|\lambda_0\|^2_{L^2(Q_T)})^\frac{1}{2}\|\lambda_0\|_{L^2(Q_T)}}.
\]

	Combining the above two inequalities, we obtain
\[
	\sup_{\ph_0 \in \Phi_\eps}\frac{b(\ph_0, \lambda_0)}{\|\ph_0\|_{\Phi_\eps}
	\|\lambda_0\|_{L^2(Q_T)}}\geq\frac{1}{\sqrt{\rho^2_{\star}\,C_{\Omega,T}+\eta}}
\]
	and, hence, \eqref{eq:infsup1} holds with $\delta = \left(\rho^2_{\star}\, C_{\Omega,T}+ \eta \right)^{-1/2}$.

	The point $(ii)$ is due to the symmetry and to the positivity of the bilinear form $a_\eps$. $(iii)$ Concerning the third point, the equality $b(\ph_\eps,\lambar)=0$ 
	for all $\lambar\in L^2(Q_T)$ implies that $L^{\star}\ph_\eps=0$ as an $L^2(Q_T)$ function, so that if $(\ph_\eps,\lambda_\eps)\in 
	\Phi_\eps\times L^2(Q_T)$  solves the mixed formulation, then $\ph_\eps\in W_\eps$ and $\mathcal{L_\eps}(\ph_\eps,\lambda_\eps)=
	\hat{J}_\eps^{\star}(\ph_\eps)$. Finally, the first equation of the mixed formulation reads as follows: 
\[
	 \jjntqT \rho_0^{-2}\ph_\eps \,\phbar\, dx\,dt + \eps(\ph_\eps(\cdot,T),\phbar (\cdot,T))
	 - \jjntQT L^{\star}\overline\ph(x, t)\, \lambda_\eps(x,t)\, dx\, dt = l(\phbar), \quad \forall \phbar\in \Phi_\eps,
\]
	or equivalently, since the control is given by $v_\eps:=\rho_0^{-2}\ph_\eps\,1_{\omega}$, 
\[
	\jjntqT v_\eps \,\phbar\, dx\,dt + (\eps\ph_\eps(\cdot,T),\phbar (\cdot,T))
	- \jjntQT L^{\star}\overline\ph(x, t)\, \lambda_\eps(x,t)\, dx\, dt = l(\phbar), \quad \forall \phbar\in \Phi_\eps.
\]
But this means that $\lambda_\eps\in L^2(Q_T)$ is solution of the heat equation in the transposition sense. Since $y_0\in L^2(\Om)$ and 
	$v_\eps\in L^2(q_T)$, $\lambda_\eps$ must coincide with the unique weak solution to~\eqref{eq:heat} $(y_\eps=\lambda_\eps)$ such that $\lambda_\eps(\cdot,T)=-\eps\ph_\eps(\cdot,T)$. As a conclusion, the optimal pair $(y_\eps,v_\eps)$ to \eqref{P-FI-L2-eps} is
	characterized in term of the adjoint variable $\ph_\eps$ solution of (\ref{eq:mf1}) by $v_\eps=\rho_0^{-2}\ph_\eps\,1_{\omega}$ and
	$y_\eps(\cdot,T)=-\eps\ph_\eps(\cdot,T)$. 
\end{proof}

\

Theorem \ref{th:mf1} reduces the search of the approximated control to the resolution of the mixed 
	formulation \eqref{eq:mf1}, or equivalently the search of the saddle point for $\mathcal{L_\eps}$. In general, it is  
	convenient to ``augment" the Lagrangian 
	(see \cite{fortinglowinski}), and consider instead the Lagrangian $\mathcal{L}_{\eps,r}$ defined for any $r>0$ by
\[
	\left\{
		\begin{aligned}
			& \mathcal{L}_{\eps,r}(\ph,\lambda):= \frac{1}{2}a_{\eps,r}(\ph,\ph) + b(\ph,\lambda) - l(\ph), \\ 
			& a_{\eps,r}(\ph,\ph):=a_{\eps}(\ph,\ph) +r \jjntQT \vert L^{\star}\ph\vert^2 \, dx\,dt.
		\end{aligned}   
	\right.
\]
	Since $a_{\eps}(\ph,\ph)=a_{\eps,r}(\ph,\ph)$ on $W_{\eps}$ and since the function $\ph_{\eps}$ such that $(\ph_{\eps},\lambda_{\eps})$ is the saddle point of $\mathcal{L}_{\eps}$ verifies $\ph_{\eps}\in W_{\eps}$,  the lagrangian $\mathcal{L_{\eps}}$ and $\mathcal{L}_{\eps,r}$ share the same saddle-point. 

\subsubsection{Dual problem of the extremal problem \eqref{eq:min_eps_refor1}}
	
\label{duasectionmf1}

	The mixed formulation allows to solve simultaneously the dual variable $\ph_\eps$, argument of the conjugate functional \eqref{eq:min_eps_refor1}, 
	and the Lagrange multiplier $\lambda_\eps$. Since $\lambda_\eps$ turns out to be the (approximate) controlled state of \eqref{eq:heat}, we may qualify $\lambda_\eps$ 
	as the primal variable of the problem. We derive in this section the corresponding extremal problem involving only that variable $\lambda_\eps$.

	For any $r>0$, let us define the linear operator $\mathcal{A}_{\eps,r}$ from $L^2(Q_T)$ into $L^2(Q_T)$ by 
\[
	\mathcal{A}_{\eps,r}\lambda:= L^{\star}\ph, \quad \forall \lambda\in L^2(Q_T)
\]
	where $\ph \in \Phi_{\eps}$ is the unique solution to
\begin{equation}
\label{eq:imageA}
	a_{\eps,r}(\ph, \phbar) = -b(\phbar, \lambda), \quad \forall \phbar \in \Phi_{\eps}.   
\end{equation}
	Note that the assumption $r>0$ is necessary here in order to guarantee the well-posedness of \eqref{eq:imageA}. 
	Precisely, for any $r>0$, the form $a_{\eps,r}$ defines a norm equivalent to the norm on $\Phi_{\eps}$ (see \eqref{eq:norm_PHIeps}).

	We have the following crucial lemma :
\begin{lemma}
\label{propA}
	For any $r>0$, the operator $\mathcal{A}_{\eps,r}$ is a strongly elliptic, symmetric isomorphism from $L^2(Q_T)$ into $L^2(Q_T)$.
\end{lemma}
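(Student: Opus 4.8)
The plan is to verify the three defining properties of $\mathcal{A}_{\eps,r}$ in turn: it is well-defined and bounded, it is symmetric, and it is strongly elliptic (hence injective with closed range, and by symmetry surjective, so an isomorphism). First I would observe that for fixed $r>0$ the bilinear form $a_{\eps,r}$ is continuous and coercive on all of $\Phi_\eps$: coercivity is immediate since $a_{\eps,r}(\ph,\ph)=\|\rho_0^{-1}\ph\|_{L^2(q_T)}^2+\eps\|\ph(\cdot,T)\|_{L^2(\Om)}^2+r\|L^\star\ph\|_{L^2(Q_T)}^2$, which equals $\|\ph\|_{\Phi_\eps}^2$ when $r=\eta$ and is equivalent to it for general $r>0$. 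Hence by Lax--Milgram, for each $\lambda\in L^2(Q_T)$ the variational problem \eqref{eq:imageA} has a unique solution $\ph=\ph(\lambda)\in\Phi_\eps$, depending linearly and continuously on $\lambda$ (the right-hand side $-b(\cdot,\lambda)$ is continuous on $\Phi_\eps$ with norm $\lesssim\|\lambda\|_{L^2(Q_T)}$). Since $L^\star\ph\in L^2(Q_T)$ by definition of $\Phi_\eps$, the map $\mathcal{A}_{\eps,r}\lambda:=L^\star\ph(\lambda)$ is a well-defined bounded linear operator on $L^2(Q_T)$.

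For symmetry, take $\lambda,\mu\in L^2(Q_T)$ with associated solutions $\ph,\psi\in\Phi_\eps$. Testing \eqref{eq:imageA} for $\lambda$ with $\phbar=\psi$ gives $a_{\eps,r}(\ph,\psi)=-b(\psi,\lambda)=\jjntQT L^\star\psi\,\lambda\,dx\,dt=(\mathcal{A}_{\eps,r}\mu,\lambda)_{L^2(Q_T)}$; testing \eqref{eq:imageA} for $\mu$ with $\phbar=\ph$ gives $a_{\eps,r}(\psi,\ph)=(\mathcal{A}_{\eps,r}\lambda,\mu)_{L^2(Q_T)}$. Since $a_{\eps,r}$ is symmetric, the two left-hand sides agree, whence $(\mathcal{A}_{\eps,r}\lambda,\mu)_{L^2(Q_T)}=(\lambda,\mathcal{A}_{\eps,r}\mu)_{L^2(Q_T)}$.

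For strong ellipticity I would compute $(\mathcal{A}_{\eps,r}\lambda,\lambda)_{L^2(Q_T)}=\jjntQT L^\star\ph\,\lambda\,dx\,dt=-b(\ph,\lambda)=a_{\eps,r}(\ph,\ph)\geq 0$, and then bound this below by $c\|\lambda\|_{L^2(Q_T)}^2$. To get the lower bound, I would reuse exactly the construction from the inf-sup step in the proof of Theorem~\ref{th:mf1}: given $\lambda$, let $\ph^0\in\Phi_\eps$ solve $L^\star\ph^0=-\lambda$ in $Q_T$, $\ph^0=0$ on $\Sigma_T$, $\ph^0(\cdot,T)=0$, which by the parabolic energy estimate satisfies $\|\ph^0\|_{\Phi_\eps}^2\leq(\rho_\star^{-2}C_{\Om,T}+r)\|\lambda\|_{L^2(Q_T)}^2$. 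Choosing $\phbar=\ph^0$ in \eqref{eq:imageA} gives $a_{\eps,r}(\ph,\ph^0)=-b(\ph^0,\lambda)=\|\lambda\|_{L^2(Q_T)}^2$, and Cauchy--Schwarz for the (semi-)inner product $a_{\eps,r}$ yields $\|\lambda\|_{L^2(Q_T)}^2\leq a_{\eps,r}(\ph,\ph)^{1/2}a_{\eps,r}(\ph^0,\ph^0)^{1/2}\leq a_{\eps,r}(\ph,\ph)^{1/2}\,(\rho_\star^{-2}C_{\Om,T}+r)^{1/2}\|\lambda\|_{L^2(Q_T)}$. Hence $(\mathcal{A}_{\eps,r}\lambda,\lambda)_{L^2(Q_T)}=a_{\eps,r}(\ph,\ph)\geq(\rho_\star^{-2}C_{\Om,T}+r)^{-1}\|\lambda\|_{L^2(Q_T)}^2$, which is strong ellipticity. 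A bounded, strongly elliptic, symmetric operator on a Hilbert space is invertible with bounded inverse (Lax--Milgram applied to the bilinear form $(\lambda,\mu)\mapsto(\mathcal{A}_{\eps,r}\lambda,\mu)_{L^2(Q_T)}$), so $\mathcal{A}_{\eps,r}$ is the claimed isomorphism.

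The only delicate ingredient is the parabolic energy estimate controlling $\ph^0$ (and more generally any $\ph\in\Phi_\eps$) in the $\Phi_\eps$-norm by the data $(L^\star\ph,\ph(\cdot,T))$; but this is precisely the estimate already invoked in the proof of Theorem~\ref{th:mf1}, so no new analysis is needed. Everything else is a routine Lax--Milgram/Cauchy--Schwarz argument, and the constant obtained, $(\rho_\star^{-2}C_{\Om,T}+r)^{-1}$, is consistent with the inf-sup constant $\delta$ found earlier.
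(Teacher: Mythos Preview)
Your proof is correct and complete, but the route you take for strong ellipticity differs from the paper's. The paper argues by contradiction: assuming a sequence $\{\lambda_n\}$ with $\|\lambda_n\|_{L^2(Q_T)}=1$ and $(\mathcal{A}_{\eps,r}\lambda_n,\lambda_n)\to 0$, it deduces $\|L^\star\ph_n\|\to 0$ and $\|\rho_0^{-1}\ph_n\|\to 0$, then tests \eqref{eq:imageA} with the solution $\phbar_n$ of $L^\star\phbar_n=rL^\star\ph_n-\lambda_n$, $\phbar_n(\cdot,T)=0$, to obtain $\|rL^\star\ph_n-\lambda_n\|_{L^2(Q_T)}\leq \rho_\star^{-1}C_{\Omega,T}\|\rho_0^{-1}\ph_n\|_{L^2(q_T)}\to 0$, forcing $\lambda_n\to 0$ and a contradiction. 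You instead give a direct argument: the same backward solution $\ph^0$ used in the inf-sup step of Theorem~\ref{th:mf1}, together with Cauchy--Schwarz for the inner product $a_{\eps,r}$, yields the explicit lower bound $(\mathcal{A}_{\eps,r}\lambda,\lambda)\geq(\rho_\star^{-2}C_{\Omega,T}+r)^{-1}\|\lambda\|_{L^2(Q_T)}^2$. Your approach is shorter, avoids the contradiction step, and has the merit of producing an explicit ellipticity constant that is exactly $\delta^2$, making transparent the expected link between the inf-sup constant and the conditioning of $\mathcal{A}_{\eps,r}$ exploited in Section~\ref{dualdualsection}. One minor slip: with the sign conventions $b(\ph,\lambda)=-\iint L^\star\ph\,\lambda$ and $L^\star\ph^0=-\lambda$, one actually gets $a_{\eps,r}(\ph,\ph^0)=-b(\ph^0,\lambda)=-\|\lambda\|_{L^2(Q_T)}^2$; this is harmless since Cauchy--Schwarz bounds $|a_{\eps,r}(\ph,\ph^0)|$ and the rest of the argument is unchanged.
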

\begin{proof}
	From the definition of $a_{\eps,r}$, we easily get that $\Vert \mathcal{A}_{\eps,r}\lambda\Vert_{L^2(Q_T)}\leq r^{-1} \Vert \lambda\Vert_{L^2(Q_T)}$ 
	and the continuity of $\mathcal{A}_{\eps,r}$. Next, consider any $\lambda^{\prime}\in L^2(Q_T)$ and denote by $\ph^{\prime}$ the corresponding 
	unique solution of \eqref{eq:imageA} so that $\mathcal{A}_{\eps,r}\lambda^{\prime}:=L^{\star}\ph^{\prime}$. Relation \eqref{eq:imageA} with 
	$\phbar=\ph^{\prime}$ then implies that 
\begin{equation}
\label{arAlambda}
	 \jjntQT  (\mathcal{A}_{\eps,r}\lambda^{\prime})\lambda \,dx\,dt = a_{\eps,r}(\ph,\ph^{\prime})    
\end{equation}
	and therefore the symmetry and positivity of $\mathcal{A}_{\eps,r}$. The last relation with $\lambda^{\prime}=\lambda$ implies that 
	$\mathcal{A}_{\eps,r}$ is also positive definite.

	Finally, let us check the strong ellipticity of $\mathcal{A}_{\eps,r}$, equivalently that the bilinear functional 
$$
	(\lambda,\lambda^{\prime})\mapsto \jjntQT (\mathcal{A}_{\eps,r}\lambda)\lambda^{\prime}\,dx\,dt
$$ 
	is $L^2(Q_T)$-elliptic. Thus we want to show that 
\begin{equation}
\label{ellipticity_A}
	\jjntQT (\mathcal{A}_{\eps,r}\lambda)\lambda\,dx\,dt \geq C \Vert \lambda\Vert^2_{L^2(Q_T)}, \quad\forall \lambda\in L^2(Q_T)
\end{equation}
	for some positive constant $C$. Suppose that \eqref{ellipticity_A} does not hold; there exists then a sequence 
	$\{\lambda_n\}_{n\geq 0}$ of $L^2(Q_T)$ such that 
\[
	\Vert \lambda_n\Vert_{L^2(Q_T)}=1, \quad\forall n\geq 0, \qquad \lim_{n\to\infty} \jjntQT (\mathcal{A}_{\eps,r}\lambda_n)\lambda_n\,dx\,dt=0.
\]
	Let us denote by $\ph_n$ the solution of \eqref{eq:imageA} corresponding to $\lambda_n$. From \eqref{arAlambda}, we then obtain that 
\begin{equation}
\label{limit}
	\lim_{n\to\infty} \Vert L^{\star}\ph_n\Vert_{L^2(Q_T)}=0, \qquad \lim_{n\to\infty} \Vert\rho_0^{-1}\ph_{n}\Vert_{L^2(q_T)}=0, 
	\qquad \lim_{n\to\infty} \Vert\ph_{n}(\cdot,T)\Vert_{L^2(\Om)}=0. 
\end{equation}
From \eqref{eq:imageA} with $\lambda=\lambda_n$ and $\ph=\ph_n$, we have
\begin{equation} 
\label{phinlambdan}
	\jjntqT \rho_0^{-2}\ph_n \,\phbar\, dx\,dt + \eps\int_0^1\ph_n(\cdot,T)\phbar (\cdot,T)dx+\jjntQT 
	 (r L^{\star}\ph_n-\lambda_n)L^{\star}\phbar \, dx\,dt =0,
	\quad \forall \phbar\in \Phi_\eps.
\end{equation}
	We define the sequence $\{\phbar_n\}_{n\geq 0}$ as follows : 
\begin{equation}\nonumber
L^{\star}\phbar_n=r\,L^{\star}\ph_n-\lambda_n \quad \textrm{in}\quad Q_T, \qquad \phbar_n=0\quad \textrm{on}\quad \Sigma_T; \quad \phbar_n(\cdot, T)=0 \quad \textrm{in}\quad \Omega,
\end{equation}
so that, for all $n\geq0$, $\phbar_n$ is the solution of the backward heat equation with zero initial datum and source term 
	$r\,L^{\star}\ph_n-\lambda_n$ in $L^2(Q_T)$. 
	Using again energy type estimates, we get 
$$
	\Vert \rho^{-1}_0\phbar_{n}\Vert_{L^2(q_T)}\leq  \rho^{-1}_{\star} \Vert \phbar_{n}\Vert_{L^2(q_T)}\leq  \rho^{-1}_{\star} C_{\Omega,T} \Vert rL^{\star}\ph_n-\lambda_n\Vert_{L^2(Q_T)},
$$ 
	so that $\phbar_n\in \Phi_\eps$. Then, using \eqref{phinlambdan} with $\phbar=\phbar_n$, we get 
\[
	\Vert rL^{\star}\ph_n-\lambda_n\Vert_{L^2(Q_T)} \leq \rho_{\star}^{-1} C_{\Omega,T} \Vert  \rho^{-1}_0\ph_{n}\Vert_{L^2(q_T)}.
\]
	Then, from \eqref{limit}, we conclude that $\lim_{n\to +\infty} \Vert \lambda_n\Vert_{L^2(Q_T)}=0$ leading to a contradiction and to the strong 
	ellipticity of the operator $\mathcal{A}_{\eps,r}$.
\end{proof}

	The introduction of the operator $\mathcal{A}_{\eps,r}$ is motivated by the following proposition: 
\begin{proposition}\label{prop_equiv_dual}
	For any $r>0$, let $\ph^0\in \Phi_\eps$ be the unique solution of 
\[
	a_{\eps,r}(\ph^0,\phbar)= l(\phbar), \quad \forall \phbar\in \Phi_\eps
\]
	and let $J_{\eps,r}^{\star\star}:L^2(Q_T)\to L^2(Q_T)$ be the functional defined by 
\[
	J_{\eps,r}^{\star\star}(\lambda) := \frac{1}{2} \jjntQT (\mathcal{A}_{\eps,r} \lambda) \,\lambda\, dx \,dt - b(\ph^0, \lambda).
\]
	The following equality holds : 
\[
	\sup_{\lambda\in L^2(Q_T)}\inf_{\ph\in \Phi_\eps} \mathcal{L}_{\eps,r}(\ph,\lambda) 
	= - \inf_{\lambda\in L^2(Q_T)} J_{\eps,r}^{\star\star}(\lambda)~~+~ \mathcal{L}_{\eps,r}(\ph^0,0).
\]
\end{proposition}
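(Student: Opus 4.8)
The plan is to recognize the right-hand mixed problem as a standard saddle-point problem whose associated Lagrangian is exactly $\mathcal{L}_{\eps,r}$, and to carry out the classical elimination of the primal variable $\ph$ from the inner infimum. First I would fix $\lambda \in L^2(Q_T)$ and compute $\inf_{\ph \in \Phi_\eps} \mathcal{L}_{\eps,r}(\ph,\lambda)$. Since $\ph \mapsto \mathcal{L}_{\eps,r}(\ph,\lambda) = \tfrac12 a_{\eps,r}(\ph,\ph) + b(\ph,\lambda) - l(\ph)$ is a quadratic functional whose quadratic part $a_{\eps,r}$ is, for $r>0$, coercive on $\Phi_\eps$ (this is precisely the norm-equivalence noted just before Lemma~\ref{propA}), it admits a unique minimizer $\ph_\lambda \in \Phi_\eps$, characterized by the Euler--Lagrange equation
\[
a_{\eps,r}(\ph_\lambda,\phbar) = l(\phbar) - b(\phbar,\lambda), \qquad \forall \phbar \in \Phi_\eps .
\]
By linearity, $\ph_\lambda = \ph^0 + \psi_\lambda$, where $\ph^0$ solves $a_{\eps,r}(\ph^0,\phbar)=l(\phbar)$ and $\psi_\lambda$ solves $a_{\eps,r}(\psi_\lambda,\phbar) = -b(\phbar,\lambda)$; comparing with~\eqref{eq:imageA}, $\psi_\lambda$ is exactly the field defining $\mathcal{A}_{\eps,r}\lambda = L^\star\psi_\lambda$.

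Next I would substitute $\ph_\lambda$ back and simplify. Using the Euler--Lagrange identity with $\phbar=\ph_\lambda$ gives $a_{\eps,r}(\ph_\lambda,\ph_\lambda) = l(\ph_\lambda) - b(\ph_\lambda,\lambda)$, so
\[
\mathcal{L}_{\eps,r}(\ph_\lambda,\lambda) = \tfrac12\big(l(\ph_\lambda) - b(\ph_\lambda,\lambda)\big) + b(\ph_\lambda,\lambda) - l(\ph_\lambda)
= -\tfrac12\, l(\ph_\lambda) + \tfrac12\, b(\ph_\lambda,\lambda).
\]
Now I expand $\ph_\lambda = \ph^0 + \psi_\lambda$ and use bilinearity of $b$ together with the two defining relations: from $a_{\eps,r}(\ph^0,\psi_\lambda) = l(\psi_\lambda)$ and $a_{\eps,r}(\psi_\lambda,\ph^0) = -b(\ph^0,\lambda)$ (plus symmetry of $a_{\eps,r}$) one gets $l(\psi_\lambda) = -b(\ph^0,\lambda)$, and from~\eqref{arAlambda} one has $b(\psi_\lambda,\lambda) = -\int\!\!\int_{Q_T}(\mathcal{A}_{\eps,r}\lambda)\lambda = -a_{\eps,r}(\psi_\lambda,\psi_\lambda)$. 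Collecting terms, the $\lambda$-dependent part reassembles precisely into $-\big(\tfrac12\int\!\!\int_{Q_T}(\mathcal{A}_{\eps,r}\lambda)\lambda\,dx\,dt - b(\ph^0,\lambda)\big) = -J^{\star\star}_{\eps,r}(\lambda)$, while the remaining $\lambda$-independent terms collapse to $-\tfrac12 l(\ph^0) = \mathcal{L}_{\eps,r}(\ph^0,0)$ (since $\mathcal{L}_{\eps,r}(\ph^0,0) = \tfrac12 a_{\eps,r}(\ph^0,\ph^0) - l(\ph^0) = -\tfrac12 l(\ph^0)$ by the definition of $\ph^0$). Hence $\inf_{\ph\in\Phi_\eps}\mathcal{L}_{\eps,r}(\ph,\lambda) = -J^{\star\star}_{\eps,r}(\lambda) + \mathcal{L}_{\eps,r}(\ph^0,0)$, and taking $\sup_{\lambda\in L^2(Q_T)}$ yields the claimed identity.

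The only genuinely delicate point is the well-posedness of the inner minimization, i.e.\ that $a_{\eps,r}$ is coercive (not merely positive) on all of $\Phi_\eps$: this is what forces $r>0$ and is already recorded in the paper as the statement that $a_{\eps,r}$ induces an equivalent norm on $\Phi_\eps$, so I would simply invoke it. The rest is bookkeeping: careful use of symmetry of $a_{\eps,r}$, bilinearity of $b$, and the two relations~\eqref{eq:imageA} and~\eqref{arAlambda}. I would present the substitution in a short display chain rather than grinding every term, flagging that the cross-terms between $\ph^0$ and $\psi_\lambda$ cancel in pairs thanks to symmetry. No compactness or Carleman input is needed here — Lemma~\ref{propA} is used only implicitly through the well-definedness of $\mathcal{A}_{\eps,r}$ and of $\ph^0$.
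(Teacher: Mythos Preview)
Your proposal is correct and follows essentially the same route as the paper's proof: both compute the inner infimum by identifying the minimizer $\ph_\lambda$, decompose it as $\ph^0+\psi_\lambda$, and use the defining equations for $\ph^0$ and $\psi_\lambda$ together with the symmetry of $a_{\eps,r}$ to extract $-J^{\star\star}_{\eps,r}(\lambda)+\mathcal{L}_{\eps,r}(\ph^0,0)$. The only difference is organizational: the paper expands $\mathcal{L}_{\eps,r}(\psi_\lambda+\ph^0,\lambda)$ directly into three blocks $X_1,X_2,X_3$, whereas you first invoke the Euler--Lagrange identity with $\phbar=\ph_\lambda$ to collapse $\mathcal{L}_{\eps,r}(\ph_\lambda,\lambda)$ to $-\tfrac12 l(\ph_\lambda)+\tfrac12 b(\ph_\lambda,\lambda)$ before expanding --- this is a slightly tidier path to the same identity.
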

\begin{proof}
	For any $\lambda\in L^2(Q_T)$, let us denote by $\ph_{\lambda}\in \Phi_\eps$ the minimizer of $\ph\mapsto \mathcal{L}_{\eps,r}(\ph,\lambda)$; 
	$\ph_{\lambda}$ satisfies the equation
\[
	a_{\eps,r}(\ph_{\lambda},\phbar) + b(\phbar,\lambda) = l(\phbar), \quad \forall \phbar\in \Phi_{\eps}
\]
	and can be decomposed as follows : $\ph_{\lambda} = \psi_{\lambda} + \ph^0$ where $\psi_{\lambda}\in \Phi_{\eps}$ solves 
\[
	a_{\eps,r}(\psi_{\lambda},\phbar) + b(\phbar,\lambda) = 0, \quad \forall \phbar\in \Phi_{\eps}.
\]
	We then have 
\[
	\begin{aligned}
		\inf_{\ph\in \Phi_{\eps}} \mathcal{L}_{\eps,r}(\ph,\lambda)  & = \mathcal{L}_{\eps,r}(\ph_\lambda,\lambda) = \mathcal{L}_{\eps,r}(\psi_\lambda+ \ph^0,\lambda)\\
		&= \frac{1}{2}a_{\eps,r}(\psi_\lambda+ \ph^0,\psi_\lambda+ \ph^0) + b(\psi_\lambda+ \ph^0,\lambda) - l(\psi_\lambda+ \ph^0) \\
		& := X_1 +X_2 + X_3
	\end{aligned}
\]
	with
\[
	\left\{
		\begin{aligned}
			& X_1 := \frac{1}{2}a_{\eps,r}(\psi_\lambda,\psi_\lambda) + b(\psi_\lambda,\lambda) + b(\ph^0,\lambda)  \\
			& X_2 := a_{\eps,r}(\ph^0,\psi_\lambda) - l(\psi_\lambda), \quad X_3 :=  \frac{1}{2}a_{\eps,r}(\ph^0,\ph^0) - l(\ph^0).
		\end{aligned}
	\right.
\]
	From the definition of $\ph^0$, $X_2=0$ while $X_3= \mathcal{L}_{\eps,r}(\ph^0,0)$. Eventually, from the definition of $\psi_\lambda$, 
\[
	X_1 = \frac{1}{2}b(\psi_\lambda,\lambda) + b(\ph^0,\lambda)
	 = -\frac{1}{2}\jjntQT (\mathcal{A}_{\eps,r} \lambda) \,\lambda\, dx \,dt + b(\ph^0,\lambda)=-J_{\eps,r}^{\star \star}(\lambda)
\]
	and the result follows. 
\end{proof}
	From the ellipticity of the operator $\mathcal{A}_{\eps,r}$, the minimization of the functional $J_{\eps,r}^{\star\star}$ over $L^2(Q_T)$ is well-posed. 
	It is interesting to note that with this extremal problem involving only $\lambda$, we are coming to the primal variable, controlled solution of 
	\eqref{eq:heat} (see Theorem \ref{th:mf1}, (iii)). This argument allows notably to avoid the direct minimization of $J_{\eps}$ (introduced in Problem \eqref{P-FI-L2-eps}) with respect to the state $y$ (ill-conditioned due to the term $\eps^{-1}$ for $\eps$ small). Here, any constraint equality is assigned to the variable $\lambda$.


\subsection{The penalized case : Mixed formulation II (relaxing the condition $L^{\star}\ph_{\eps}=0$ in $L^2(Q_T)$)}
\label{second_mixed_form}

The previous mixed formulation amounts to find a backward solution $\ph_{\varepsilon}$ satisfying the condition $L^{\star}\ph_{\eps}=0$ in $L^2(Q_T)$. For numerical purposes, it may be interesting to relax this condition, which typically leads to the use of $C^1$ type approximations in the space variable (see Section \ref{numerics}). In order to circumvent this difficulty, we introduce and analyze in this section a second penalized mixed formulation  
where the condition on $\ph_{\eps}$ is relaxed, namely we impose the constraint $L^{\star}\ph_{\eps}=0$ in $L^2(0,T;H^{-1}(\Omega))$. 

Considering as before the full adjoint variable $\ph$ as the main variable, we associated to \eqref{eq:min_eps_adj} the following extremal problem :
\begin{equation}
\label{eq:min_eps_refor2}
	\min_{\ph\in \widehat W_\eps} \hat J^{\star}_\eps(\ph)={1\over2} \jjntQT \rho_0^{-2}|\ph(x, t)|^2 dx\,dt 
	+ {\eps\over2}\|\ph(\cdot,T)\|^2_{L^2(\Om)}+\int_\Om y_0(x) \ph(x, 0) dx,
\end{equation}
over the space $\widehat W_\eps = \left\{ \ph\in \widehat\Phi_\eps:\, L^{\star} \ph=0 \hbox{ in } L^2(0,T;H^{-1}(\Omega))\right\}$. The space $\widehat\Phi_\eps$ is again defined as the completion of $\Phi^0$ with respect to the inner product  	
\begin{equation}\nonumber
	(\ph, \phbar)_{\widehat\Phi_\eps} := \iint_{q_T} \rho_0^{-2}\ph \,\phbar\, dx\,dt + \eps(\ph(\cdot,T),\phbar (\cdot,T))
	+ \eta \left(\iint_{Q_T} \nabla\ph \,\nabla\phbar\, dx\,dt + \int_0^T(\ph_t,\phbar_t)_{H^{-1}}dt\right),
\end{equation}
defined over $\Phi^0$. We denote by $\|\cdot\|_{\widehat\Phi_\eps}$ the 
	associated norm such that 
\begin{equation} \label{eq:normhatWveps}
	\|\ph\|_{\widehat \Phi_\eps}^2 :=\|\rho^{-1}_0\ph\|^2_{L^2(q_T)} + \eps\|\ph(\cdot,T)\|^2_{L^2(\Om)}+ \eta (
	\|\nabla \ph\|^2_{L^2(Q_T)}+\|\ph_t\|^2_{L^2(0,T;H^{-1})}),	 \quad \forall \ph \in \widehat\Phi_\eps.
\end{equation}

\begin{lemma}
	The equality $\widehat W_\eps= W_\eps$ holds. Therefore, the minimization problem \eqref{eq:min_eps_refor2} is
	equivalent to the minimization \eqref{eq:min_eps_refor1}. 
\end{lemma}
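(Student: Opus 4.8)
The plan is to show the two inclusions $W_\eps \subseteq \widehat W_\eps$ and $\widehat W_\eps \subseteq W_\eps$, the first being essentially trivial and the second being the substantive part. For the easy direction, observe that if $\ph \in \Phi_\eps$ with $L^\star\ph = 0$ in $L^2(Q_T)$, then $L^\star\ph = 0$ a fortiori in $L^2(0,T;H^{-1}(\Omega))$, and one must also check that $\Phi_\eps \hookrightarrow \widehat\Phi_\eps$, i.e. that the norm $\|\cdot\|_{\widehat\Phi_\eps}$ is controlled by $\|\cdot\|_{\Phi_\eps}$ on $\Phi^0$ so that the completions are compatible; this follows because on $\Phi^0$, writing $\ph_t = L^\star\ph + \nabla\cdot(c\nabla\ph) - d\ph$, the $H^{-1}$-norm of $\ph_t$ is bounded by $\|L^\star\ph\|_{L^2} + C(\|\nabla\ph\|_{L^2} + \|\ph\|_{L^2})$, while $\|\nabla\ph\|_{L^2(Q_T)}^2$ is itself bounded in terms of $\|L^\star\ph\|_{L^2(Q_T)}^2 + \|\ph(\cdot,T)\|_{L^2(\Om)}^2$ by the standard energy estimate already invoked in the proof of Theorem~\ref{th:mf1}. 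Hence $W_\eps \subseteq \widehat W_\eps$ with continuous inclusion.

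The core of the argument is the reverse inclusion: if $\ph \in \widehat\Phi_\eps$ satisfies $L^\star\ph = 0$ in $L^2(0,T;H^{-1}(\Omega))$, then in fact $\ph \in \Phi_\eps$ and $L^\star\ph = 0$ in $L^2(Q_T)$. The strategy here is a regularity/bootstrap argument. Given such a $\ph$, it solves the backward heat equation $L^\star\ph = 0$ with $\ph=0$ on $\Sigma_T$ and final datum $\ph(\cdot,T) = \ph_T \in L^2(\Om)$ — note $\ph_T$ makes sense because the $\widehat\Phi_\eps$ norm controls $\|\ph(\cdot,T)\|_{L^2(\Om)}$ and the trace map extends continuously to the completion. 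But the unique solution of the backward heat equation with $L^2(\Om)$ final datum enjoys, by classical parabolic regularity (as quoted in the Introduction: $y \in C^0([0,T];L^2(\Om)) \cap L^2(0,T;H^1_0(\Om))$ for the forward problem, time-reversed here), the regularity $\ph \in L^2(0,T;H^1_0(\Om)) \cap C^0([0,T];L^2(\Om))$, and then $\ph_t = \nabla\cdot(c\nabla\ph) - d\ph \in L^2(0,T;H^{-1}(\Om))$, so $L^\star\ph = 0$ holds in $L^2(0,T;H^{-1}(\Om))$ automatically — consistent with the hypothesis. The point to extract is that $L^\star\ph = 0$ as an $L^2(Q_T)$ identity is then immediate (both sides are the zero function), so $\ph$ satisfies the defining constraint of $W_\eps$, and the finite $\widehat\Phi_\eps$-norm together with this regularity gives $\ph \in \Phi_\eps$.

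The main obstacle — and the step deserving genuine care — is the identification of the limit objects: elements of $\widehat\Phi_\eps$ and $\Phi_\eps$ are by definition equivalence classes of Cauchy sequences in $\Phi^0$, and one must verify that such an abstract element genuinely corresponds to a function (or distribution) on $Q_T$ on which $L^\star$ acts, and that the two completion procedures yield the same concrete space once the constraint $L^\star\ph=0$ is imposed. The clean way to handle this is to note that both $\widehat\Phi_\eps$ and $\Phi_\eps$ embed continuously into a common space — for instance $L^2(q_T) \times L^2(\Om)$ via $\ph \mapsto (\rho_0^{-1}\ph|_{q_T}, \ph(\cdot,T))$ is injective on each by the unique continuation property (this is exactly why the bilinear forms are scalar products), and on the subspaces where $L^\star\ph = 0$ the map $\ph \mapsto \ph(\cdot,T) = \ph_T \in L^2(\Om)$ is already injective and the backward heat solution operator provides the inverse. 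Thus both $W_\eps$ and $\widehat W_\eps$ are isometric (up to equivalent norms) to $L^2(\Om)$ via $\ph \mapsto \ph(\cdot,T)$, with the same underlying set of functions $\ph$, which forces $W_\eps = \widehat W_\eps$. Once this is established, the equivalence of the minimization problems \eqref{eq:min_eps_refor1} and \eqref{eq:min_eps_refor2} is immediate since the functional $\hat J^\star_\eps$ has the same expression on both, and the identity $W_\eps = \widehat W_\eps$ ensures the feasible sets coincide.
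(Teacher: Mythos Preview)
Your proposal is correct and follows essentially the same approach as the paper: both inclusions are handled the same way, with the key observation for $\widehat W_\eps \subseteq W_\eps$ being that any element is the backward heat solution determined by its final trace $\ph(\cdot,T)\in L^2(\Om)$. The paper phrases this step concretely by approximating $\ph(\cdot,T)$ with smooth data $\ph_T^n\in C_0^\infty(\Om)$ and solving $L^\star\ph^n=0$ to produce a sequence in $\Phi^0$ that is Cauchy in both norms, whereas you express the same content more abstractly as an identification of $W_\eps$ and $\widehat W_\eps$ with $L^2(\Om)$ via the final-trace map; these are two presentations of the same argument.
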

\begin{proof}
	First, let us see that $ W_\eps \subset \widehat W_\eps$. To do this, it is enough see that $ \Phi_\eps\subset \widehat \Phi_\eps$. 
	In fact, if $\ph\in  \Phi_\eps$ then there exists a sequence $(\ph^n)_{n=1}^{\infty}$
	in $\Phi_0$ such that $\ph^n \to \ph$ in $\Phi_\eps$. So, we can conclude that  $\ph^n \to \ph$ in $L^2(0,T;H_0^1(\Om))$ and
	$\ph^n_t \to \ph_t$ in $L^2(0,T;H^{-1}(\Om))$. Hence, $\ph^n \to \ph$ in $\widehat\Phi_\eps$. 
	
	Secondly, let us see that $ \widehat W_\eps \subset  W_\eps$. Indeed, if $\widehat \ph \in \widehat W_\eps$ then 
	$\widehat \ph \in \widehat \Phi_\eps$ and $L^\star\widehat\ph=0$. Let us denote $\widehat \ph_T:=\widehat\ph(\cdot,T)$, 
	so there exists a sequence $(\ph^n_T)_{n=1}^{\infty}$ in $C^{\infty}_0(\Om)$ such that  $\ph^n_T \to \widehat\ph_T$ in $L^2(\Om)$.
	Now, if $(\ph^n)_{n=1}^{\infty}$ is a sequence such that $L^\star\ph^n=0$, $\ph^n=0$ on $\Sigma_T$ and $\ph^n(\cdot,T):= \ph_T^n$
	then this sequence belongs to $\Phi^0$. Hence, $\ph^n \to \widehat\ph$ in $\widehat\Phi_\eps$ and $\ph^n \to \widehat\ph$ in $\Phi_\eps$.
	Therefore, $\widehat\ph$ belongs to $ W_\eps$.
\end{proof}
		
	\
	
	The main variable is now $\ph$ submitted to the constraint equality $L^{\star}\ph=0 \in L^2(0,T;H^{-1})$. As before, this constraint is addressed by 
	introducing a mixed formulation given as follows~: find $(\ph_\eps,\lambda_\eps)\in \widehat \Phi_\eps\times \widehat\Lambda_\eps$ 
	solution of 
\begin{equation} 
\label{eq:mf2}
	\left\{
		\begin{array}{rcll}
			\noalign{\smallskip} 
			\hat a_\eps(\ph_\eps, \phbar) + \hat b(\phbar, \lambda_\eps) 
			& = &\hat l(\phbar), & \qquad \forall \phbar \in\widehat  \Phi_\eps \\
			\noalign{\smallskip}\hat b(\ph_\eps, \lambar) & = & 0, & \qquad \forall \lambar \in \widehat\Lambda_\eps,
		\end{array}
	\right.
\end{equation}
	where $\widehat\Lambda_\eps:= L^2(0,T;H^1_0(\Om))$ and 
\begin{align}
	\nonumber
	&\hat a_\eps :\widehat  \Phi_\eps \times\widehat  \Phi_\eps \to \mathbb{R},  \quad \hat a_\eps(\ph, \phbar) 
			   := \jjntqT \rho_0^{-2}\ph \,\phbar\, dx\,dt + \eps(\ph(\cdot,T),\phbar (\cdot,T))_{L^2(\Omega)} \\
	\nonumber
	&\hat b:\widehat  \Phi_\eps \times \widehat\Lambda_\eps \to \mathbb{R}, \\
	\nonumber
	&\hat b(\ph, \lambda) := - \int_0^T <L^{\star}\ph,\lambda>_{H^{-1}(\Omega),H^1_0(\Omega)}dt\\
	\nonumber
	&\hspace{1.1cm} = \int_0^T\!\!\!\!\! \langle\ph_t(t), \lambda(t)\rangle_{H^{-1},H^1_0}\, dt
			  - \jjntQT \biggl((c(x)\nabla \ph,\nabla \lambda) +d(x,t) \ph\lambda\biggr)\,dx\,dt\\
       \nonumber
	&\hat l:\widehat  \Phi_\eps \to \mathbb{R},  \quad\hat  l(\ph) := -\int_\Om y_0(x) \ph(x, 0) dx.
\end{align}

Similarly to Theorem \ref{th:mf1}, the following holds :
\begin{theorem}\label{th:mf2}
	\begin{enumerate}
	\item 
		The mixed formulation \eqref{eq:mf2} is well-posed.
	\item 
		The unique solution $( \ph_\eps, \lambda_\eps)\in \widehat \Phi_\eps\times \widehat\Lambda_\eps$ 
		is the unique saddle-point of the Lagrangian operator ${\mathcal{\widehat L_\eps} :  \widehat \Phi_\eps\times \widehat\Lambda_\eps \to \mathbb{R}}$ 
		defined by
		\begin{equation}
		\label{eq:calLhat}
			\mathcal{\widehat L_\eps}(\ph, \lambda) := {1\over2}\hat a_\eps(\ph, \ph) +\hat b(\ph, \lambda)  - \hat l(\ph).
		\end{equation}
	\item
	      The optimal function $\ph_{\eps}$ is the minimizer of  $\hat J_{\eps}^{\star}$ over $\widehat{W}_{\eps}$ while the optimal multiplier $\lambda_{\eps}\in \hat{\Lambda}_{\eps}$ is the weak solution of the heat equation \eqref{eq:heat}.  
\end{enumerate}
\end{theorem}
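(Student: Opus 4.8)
The plan is to follow the same three-step template used in the proof of Theorem~\ref{th:mf1}, verifying the Brezzi--Fortin conditions for the pair $(\hat a_\eps, \hat b)$ on $\widehat\Phi_\eps \times \widehat\Lambda_\eps$. First I would check the routine continuity and positivity properties: $\hat a_\eps$ is clearly symmetric, nonnegative and continuous on $\widehat\Phi_\eps \times \widehat\Phi_\eps$ (each term is dominated by $\|\cdot\|_{\widehat\Phi_\eps}$), and $\hat b$ is continuous on $\widehat\Phi_\eps \times \widehat\Lambda_\eps$ since $\langle \ph_t, \lambda\rangle$ and the bilinear terms $(c\nabla\ph,\nabla\lambda)$, $(d\ph,\lambda)$ are all controlled by $\|\ph\|_{\widehat\Phi_\eps}\|\lambda\|_{\widehat\Lambda_\eps}$. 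The continuity of $\hat l$ follows from the same energy estimate $\|\ph(\cdot,0)\|^2_{L^2(\Om)} \le C(\|L^\star\ph\|^2_{L^2(0,T;H^{-1})} + \|\ph(\cdot,T)\|^2_{L^2(\Om)})$ valid for $\ph\in\widehat\Phi_\eps$, giving $\|\ph(\cdot,0)\|^2_{L^2(\Om)} \le C\max(\eta^{-1},\eps^{-1})\|\ph\|^2_{\widehat\Phi_\eps}$.

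The coercivity of $\hat a_\eps$ on $\mathcal{N}(\hat b)$ is again immediate: since $\hat b(\ph,\lambar)=0$ for all $\lambar\in L^2(0,T;H^1_0(\Om))$ forces $L^\star\ph=0$ in $L^2(0,T;H^{-1}(\Om))$, one has $\mathcal{N}(\hat b)=\widehat W_\eps$, on which $\hat a_\eps(\ph,\ph)=\|\rho_0^{-1}\ph\|^2_{L^2(q_T)}+\eps\|\ph(\cdot,T)\|^2_{L^2(\Om)}$; but on $\widehat W_\eps$ the term $\eta(\|\nabla\ph\|^2_{L^2(Q_T)}+\|\ph_t\|^2_{L^2(0,T;H^{-1})})$ is itself bounded by a constant times $\|\rho_0^{-1}\ph\|^2_{L^2(q_T)}+\eps\|\ph(\cdot,T)\|^2_{L^2(\Om)}$ by the same energy/observability argument, so $\hat a_\eps(\ph,\ph)\ge c\|\ph\|^2_{\widehat\Phi_\eps}$ on $\widehat W_\eps$. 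The main work is the inf-sup condition: given $\lambda^0\in \widehat\Lambda_\eps = L^2(0,T;H^1_0(\Om))$, I would let $\ph^0$ solve the backward heat equation $L^\star\ph^0 = -\lambda^0$ in $L^2(0,T;H^{-1}(\Om))$ with $\ph^0(\cdot,T)=0$, $\ph^0=0$ on $\Sigma_T$. Parabolic regularity gives $\ph^0\in L^2(0,T;H^1_0(\Om))\cap H^1(0,T;H^{-1}(\Om))$ with $\|\nabla\ph^0\|^2_{L^2(Q_T)}+\|\ph^0_t\|^2_{L^2(0,T;H^{-1})}\le C\|\lambda^0\|^2_{L^2(0,T;H^{-1})}\le C'\|\lambda^0\|^2_{L^2(0,T;H^1_0)}$ (using $H^1_0\hookrightarrow L^2\hookrightarrow H^{-1}$), and likewise $\|\rho_0^{-1}\ph^0\|^2_{L^2(q_T)}\le \rho_\star^{-2}\|\ph^0\|^2_{L^2(Q_T)}\le C''\|\lambda^0\|^2_{L^2(0,T;H^1_0)}$; hence $\ph^0\in\widehat\Phi_\eps$. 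Then $\hat b(\ph^0,\lambda^0) = \int_0^T\langle -L^\star\ph^0,\lambda^0\rangle\,dt = \|\lambda^0\|^2_{?}$—and here is the subtlety I expect to be the main obstacle: the pairing $\int_0^T\langle L^\star\ph^0,\lambda^0\rangle_{H^{-1},H^1_0}\,dt$ recovers not $\|\lambda^0\|^2_{L^2(0,T;H^1_0)}$ but $\int_0^T\langle\lambda^0,\lambda^0\rangle\,dt$, which is only the $L^2(0,T;L^2(\Om))$-norm squared (or requires care since $\lambda^0\in H^1_0$, its pairing with itself as an $H^{-1}$-element is $\|\lambda^0\|^2_{L^2}$). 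To get a genuine lower bound of the form $\|\lambda^0\|^2_{\widehat\Lambda_\eps}$ one should instead choose $\ph^0$ via $L^\star\ph^0 = -(-\Delta)^{-1}\lambda^0$ or, more cleanly, realize the duality through the Riesz isomorphism $-\Delta: H^1_0\to H^{-1}$: pick $\ph^0$ solving $L^\star\ph^0 = -(-\Delta)\lambda^0$... but $(-\Delta)\lambda^0\in H^{-1}$ only if $\lambda^0\in H^1_0$, which it is, giving $\hat b(\ph^0,\lambda^0)=\int_0^T\langle(-\Delta)\lambda^0,\lambda^0\rangle\,dt = \|\nabla\lambda^0\|^2_{L^2(Q_T)}=\|\lambda^0\|^2_{\widehat\Lambda_\eps}$, while the energy estimate still bounds $\|\ph^0\|_{\widehat\Phi_\eps}\le C\|(-\Delta)\lambda^0\|_{L^2(0,T;H^{-1})}=C\|\lambda^0\|_{\widehat\Lambda_\eps}$. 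This yields $\sup_\ph \hat b(\ph,\lambda^0)/(\|\ph\|_{\widehat\Phi_\eps}\|\lambda^0\|_{\widehat\Lambda_\eps}) \ge \|\lambda^0\|^2_{\widehat\Lambda_\eps}/(C\|\lambda^0\|^2_{\widehat\Lambda_\eps}) = 1/C$, the desired inf-sup constant $\hat\delta$.

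With the three Brezzi--Fortin hypotheses verified, part~(1) follows from \cite{BrezziFortin}. Part~(2) is then immediate from the symmetry and nonnegativity of $\hat a_\eps$: the saddle-point system of $\mathcal{\widehat L_\eps}$ is exactly \eqref{eq:mf2}. For part~(3), the second equation $\hat b(\ph_\eps,\lambar)=0$ for all $\lambar\in\widehat\Lambda_\eps$ says precisely $\ph_\eps\in\widehat W_\eps$, so $\mathcal{\widehat L_\eps}(\ph_\eps,\lambda_\eps)=\hat J^\star_\eps(\ph_\eps)$ and, since \eqref{eq:min_eps_refor2} is equivalent to \eqref{eq:min_eps_refor1} by the preceding lemma, $\ph_\eps$ minimizes $\hat J^\star_\eps$ over $\widehat W_\eps$. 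Finally, rewriting the first equation of \eqref{eq:mf2} and recalling $v_\eps := \rho_0^{-2}\ph_\eps 1_\om$, one reads off that $\lambda_\eps\in L^2(0,T;H^1_0(\Om))$ satisfies, against all test functions $\phbar\in\widehat\Phi_\eps$,
\[
\int_0^T\langle\phbar_t,\lambda_\eps\rangle\,dt - \jjntQT\bigl((c\nabla\phbar,\nabla\lambda_\eps)+d\,\phbar\lambda_\eps\bigr)\,dx\,dt = -\jjntqT v_\eps\phbar\,dx\,dt - \eps(\ph_\eps(\cdot,T),\phbar(\cdot,T))_{L^2(\Om)} - (y_0,\phbar(\cdot,0))_{L^2(\Om)},
\]
which is exactly the variational (weak) formulation of the heat equation \eqref{eq:heat} with right-hand side $v_\eps 1_\om$, initial datum $y_0$ and terminal value $\lambda_\eps(\cdot,T)=-\eps\ph_\eps(\cdot,T)$; by uniqueness of the weak solution, $\lambda_\eps=y_\eps$. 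The only point requiring slightly more care than in Theorem~\ref{th:mf1} is the integration-by-parts in time needed to pass between $\int\langle\ph_t,\lambda\rangle$ and $\int\langle\lambda_t,\ph\rangle$ forms and to identify the boundary terms at $t=0$ and $t=T$, which is justified because elements of $\widehat\Phi_\eps$ and $\widehat\Lambda_\eps$ have the regularity $H^1(0,T;H^{-1})\cap L^2(0,T;H^1_0)\hookrightarrow C([0,T];L^2(\Om))$.
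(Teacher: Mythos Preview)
Your proposal is correct and follows essentially the same route as the paper: continuity of the forms, coercivity of $\hat a_\eps$ on $\mathcal{N}(\hat b)=\widehat W_\eps$ via energy estimates, and the inf-sup condition via the backward problem $L^\star\ph^0=\Delta\lambda^0$ (your $L^\star\ph^0=-(-\Delta)\lambda^0$), which is exactly the choice the paper makes to recover $\hat b(\ph^0,\lambda^0)=\|\nabla\lambda^0\|^2_{L^2(Q_T)}$. Your exploratory discussion of why the naive choice $L^\star\ph^0=-\lambda^0$ fails and the Riesz isomorphism fix is spot on; the paper simply states the correct test function without the detour.
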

\begin{proof}
	We easily check that the bilinear form $\hat a_\eps$ is continuous over $ \widehat\Phi_\eps\times \widehat \Phi_\eps$, symmetric and 
	positive and that the bilinear form $\hat b$ is continuous over $\widehat\Phi_\eps\times \widehat\Lambda_\eps$. Furthermore, the continuity 
	of the linear form $\hat l$ over $\widehat\Phi_\eps$ is a direct by the continuous embedding $\widehat \Phi_\eps\hookrightarrow C^0([0,T];L^2(\Om))$.
	Therefore, the well-posedness of the mixed formulation is a consequence of the following two properties (see \cite{BrezziFortin}):
\begin{itemize}
	\item 
		$\hat a_\eps$ is coercive on $\mathcal{N}(\hat b)$, where $\mathcal{N}(\hat b)$ denotes the kernel of $\hat b$~:
		\[
			\mathcal{N}(\hat b) = \left\{ \ph \in \widehat\Phi_\eps \textrm{ such that }\hat b(\ph, \lambda) = 0 
			\textrm{ for every } \lambda \in \widehat\Lambda_\eps \right\}.
		\]
	\item  
		$\hat b$ satisfies the usual ``inf-sup" condition over $\widehat\Phi_\eps\times \widehat\Lambda_\eps$: there exists $\delta > 0$ such that 
		\begin{equation}
		\label{eq:infsup2}
			\inf_{\lambda \in  \widehat\Lambda_\eps} \sup_{\ph\in \widehat \Phi_\eps} \frac{\hat b(\ph, \lambda)}{\|\ph\|_{\widehat \Phi_\eps}
			 \|\lambda\|_{ \widehat\Lambda_\eps}} \geq \delta.
		\end{equation}
\end{itemize}

	From the definition of $\hat a_\eps$, the first point is clear : for all $\ph\in \mathcal{N}(\hat b_\eps)= \widehat W_\eps$, thanks to classical
	energy estimates, we have
\[
	\begin{alignedat}{2}
		\hat a_\eps(\ph,\ph)=&~\|\rho^{-1}_0\ph\|^2_{L^2(q_T)}+{\eps\over2}\| \ph(\cdot,T)\|^2_{L^2(\Om)}+{\eps\over2}\| \ph(\cdot,T)\|^2_{L^2(\Om)}\\
					 \geq&~\|\rho^{-1}_0\ph\|^2_{L^2(q_T)}+{\eps\over2}\| \ph(\cdot,T)\|^2_{L^2(\Om)}
					 		+\eps\, C(\| \nabla\ph\|^2_{L^2(Q_T)}+\|\ph_t\|^2_{L^2(0,T;H^{-1})})\\
					    \geq&~C_{\eps,\eta}\Vert \ph\Vert^2_{\widehat \Phi_\eps},
	\end{alignedat}
\]
	where $C= C(T,c_0,\|d\|_{\infty})>0$ and $C_{\eps,\eta}:= \min( 2^{-1}, C\,\eps\,\eta^{-1})$.
	
	Let us check the inf-sup condition. For any fixed $\lambda^0\in \widehat\Lambda_\eps$, we define the (unique) element 
	$\ph^0$ of 
	\[
	L^{\star}\ph^0=\Delta \lambda^0 \quad \textrm{in}\quad Q_T, \qquad \ph^0=0\quad \textrm{on}\quad \Sigma_T; \quad \ph^0(\cdot, T)=0 \quad \textrm{in}\quad \Omega,
\]
so that $\ph^0$ solves the backward heat equation with source term $\Delta\lambda^0$, null Dirichlet boundary
	condition and zero initial state. Since $\Delta\lambda^0\in L^2(0,T;H^{-1})$, then $\ph^0\in \widehat \Phi_\eps$: 
	precisely, using energy estimates, there exists a constant $C > 0$ such that $\ph^0$ satisfies the inequalities
\[
	 \|\nabla\ph^0\|^2_{L^2(Q_T)} \leq  C\|\nabla \lambda^0\|_{L^2(Q_T)}^2 
\]
	and
\[
\begin{alignedat}{2}
	 \|\ph^0\|^2_{\widehat \Phi_\eps}=&~\|\rho^{-1}_0\ph^0\|^2_{L^2(q_T)}+\eps\| \ph^0(\cdot,T)\|^2_{L^2(\Om)} 
	 						     +\eta(\|\nabla \ph^0\|^2_{L^2(Q_T)}+\|\ph^0_t\|^2_{L^2(0,T;H^{-1})})\\
							=&~\|\rho^{-1}_0\ph^0\|^2_{L^2(q_T)}+\eta(\|\nabla \ph^0\|^2_{L^2(Q_T)}+\|\ph^0_t\|^2_{L^2(0,T;H^{-1})})\\
						     \leq&~C_\eta\|\nabla\lambda^0\|^2_{L^2(Q_T)}.
\end{alignedat}
\]
where $C= C(T,\|c\|_{\infty},\|d\|_{\infty})>0$ and $C_{\eta}:= C (1+\eta)$.

	Consequently, $\ph^0\in\widehat \Phi_\eps$. In particular, we have $\hat b(\ph^0,\lambda^0)= \Vert\nabla \lambda^0\Vert^2_{L^2(Q_T)}$ and
\[
	\sup_{\ph \in \widehat \Phi_\eps} \frac{\hat b(\ph, \lambda^0)}{\|\ph\|_{\widehat \Phi_\eps} \|\lambda^0\|_{ \widehat\Lambda_\eps}} 
	\geq \frac{\hat b(\ph^0, \lambda^0)}{\|\ph^0\|_{\widehat \Phi_\eps} \|\lambda^0\|_{ \widehat\Lambda_\eps}} 
	\geq \frac{\Vert\nabla \lambda^0\Vert^2_{L^2(Q_T)}}{C_\eta^{1/2} \|\nabla\lambda_0\|_{L^2(Q_T)} \|\nabla\lambda_0\|_{L^2(Q_T)}}.
\]

	Combining the above two inequalities, we obtain
\[
	\sup_{\ph \in \widehat \Phi_\eps} \frac{\hat b(\ph, \lambda^0)}{\|\ph\|_{\widehat \Phi_\eps} \|\lambda^0\|_{\widehat\Lambda_\eps}} \geq \frac{1}{\sqrt{C_\eta}}
\]
	and, hence, \eqref{eq:infsup2} holds with $\delta =  C_\eta^{-\frac{1}{2}}$.

	The point $(ii)$ is due to the symmetry and to the positivity of the bilinear form $\hat a_\eps$. 
	
Concerning the third assertion, the equality 
	$b(\hat\ph_\epsilon,\lambar)=0$ for all $\lambar\in  \widehat\Lambda_\eps$ implies that $L^{\star}\ph_\epsilon=0$ as an 
	$L^2(0,T;H^{-1})$ function, so that if $(\ph_\eps,\lambda_\eps)\in\widehat \Phi_\eps\times  \widehat\Lambda_\eps$  solves the mixed 
	formulation \eqref{eq:mf2}, then $\ph_\eps\in\widehat W_\eps$ and $\mathcal{\widehat L_\eps}(\ph_\eps,\lambda_\eps)
	=\hat J_\eps^{\star}(\ph_\eps)$. This implies that $\ph_\eps$ of the two mixed formulations coincide.

	Assuming $y_0\in L^2(\Om)$ and $v\in L^2(q_T)$, it is said here that $y\in L^2(0,T;H^1_0(\Om))$ 
	is the (unique) solution by transposition of the heat equation \eqref{eq:heat} if and only if, for every $g\in L^2(0,T;H^{-1})$, we have
\[	
	 \int_0^T \langle g,y\rangle_{H^{-1},H^1_0} dt = \iint_{q_T} v \,\phbar\, dx\,dt + (\phbar(\cdot,0),y_0)_{L^2(\Om)},
\]	
	where $\phbar$ solves
\[
\label{eq:heat_trans}
	L^{\star}\phbar= g\quad \textrm{in}\quad  Q_T,  \qquad \phbar  = 0\quad \textrm{on}\quad  \Sigma_T, \quad \phbar(\cdot, T) = 0\quad \textrm{in}\quad \Om.
\]
As
	$g\mapsto (v,\phbar)_{L^2(q_T)}+(\phbar(\cdot,0),y_0)_{L^2(\Om)}$ is linear and continuous on $L^2(0,T;H^{-1})$
	the Riesz representation theorem guarantees that this definition makes sense.

	Finally, the first equation of the mixed formulation \eqref{eq:mf2} reads as follows: 
\[
\begin{alignedat}{2}
	 &\iint_{q_T} \rho_0^{-2}\ph_\eps \,\phbar\, dx\,dt + \eps(\ph_\eps(\cdot,T),\phbar (\cdot,T))
	 +\int_0^T \langle\phbar_t ,\lambda_\eps\rangle_{H^{-1},H^1_0} \\
	 &- \iint_{Q_T}(c(x)\nabla\phbar,\nabla\lambda_\eps) +d(x,t) \phbar\lambda_\eps\,dx\,dt
	 =\hat l(\phbar), \quad \forall \phbar\in \widehat\Phi_\eps,
\end{alignedat}
\]
	or equivalently, since the control is given by $v_\eps=\rho_0^{-2}\ph_\eps$ (recall that the formulations 
	\eqref{eq:min_eps_refor1} and \eqref{eq:min_eps_refor2} are equivalent), 
\[
\begin{alignedat}{2}
	&\iint_{q_T} v_\eps \,\phbar\, dx\,dt + (\eps\ph_\eps(\cdot,T),\phbar (\cdot,T))
	 + \int_0^T \langle\phbar_t ,\lambda_\eps\rangle_{H^{-1},H^1_0}\,dt  \\
	 &- \iint_{Q_T}(c(x)\nabla \phbar,\nabla\lambda_\eps) +d(x,t) \phbar\lambda_\eps\,dx\,dt
	 =\hat l(\phbar), \quad \forall \phbar\in\widehat \Phi_\eps.
\end{alignedat}
\]
	But this means that $\lambda_\eps\in\widehat\Lambda_\eps$ is solution of the heat equation in the
	transposition sense.	Since $y_0\in L^2(\Om)$ and $v_\eps\in L^2(q_T)$, $\lambda_\eps$ must coincide with the unique weak solution 
	to~\eqref{eq:heat} $(y_\eps=\lambda_\eps)$ and, in particular, we can conclude that $y_\eps(\cdot,T)=-\eps\ph_\eps(\cdot,T)$. 
	So from the unique of the weak solution, the solution $(\ph_\eps,\lambda_\eps)$ of the two mixed formulation coincides.
\end{proof}
	
	\
	
The equivalence of the mixed formulation (\ref{eq:mf2}) with the mixed formulation (\ref{eq:mf1}) is related to the regularizing property of the heat kernel. At the numerical level, the advantage is that this formulation leads naturally to continuous spaces of approximation both in time and space.


\subsection{Third mixed formulation of the controllability problem : the limit case $\eps=0$}
\label{third_mixed_form}

We consider in this section the limit case of Section \ref{penalized_one} corresponding to $\varepsilon=0$, i.e. to the null controllability. 
The conjugate functional $J^{\star}$ corresponding to this case is given in the introduction, see \eqref{eq:J}, with a weight $\rho^{-2}_0$ (recall that $\rho_0\in \mathcal{R}$ defined by \eqref{def_spaceR}) in the first term, precisely
\begin{equation}\label{eq:Jb}
\min_{\ph_T\in \mathcal{H}} \; J^{\star}(\ph_T) := \frac{1}{2} \jjntqT  \rho_0^{-2}(x,t) |\ph(x,t)|^2\, dx dt + (y_0, \ph(\cdot, 0))_{L^2(\Omega)} 
\end{equation}
where the variable $\ph$ solves the backward heat equation \eqref{eq:wr} and $\mathcal{H}$ is again defined as the completion of the $L^2(\Omega)$ space with respect to the norm $\Vert \ph_T\Vert_{\mathcal{H}}:=\Vert\rho_0^{-1} \ph\Vert_{L^2(q_T)}$. As explained in the introduction, the limit case is much more singular due to the hugeness of the space $\mathcal{H}$. At the limit $\eps=0$, the control of the terminal state $\ph(\cdot,T)$ is lost in $L^2(\Omega)$. 

Let $\rho\in \mathcal{R}$. Proceeding as before, we consider again the space $\widetilde\Phi_0=\{\ph\in C^2(\overline{Q_T}): \ph=0\,\textrm{on}\, \Sigma_T\}$ and then, for any $\eta>0$, we define the bilinear form 
\[
	(\ph, \phbar)_{\widetilde\Phi_{\rho_0,\rho}} := \jjntqT \rho_0^{-2}\ph \,\phbar\, dx\,dt  + \eta  \jjntQT \rho^{-2} L^{\star}\ph\, L^{\star}\phbar\, dx\,dt , \quad \forall \ph, \phbar \in \widetilde\Phi_0.
\]	
The introduction of the weight $\rho$, which does not appear in the original problem \eqref{eq:Jb} will be motivated at the end of this Section. 
From the unique continuation property for the heat equation, this bilinear form defines for any $\eta>0$ a scalar product. Let then $\widetilde\Phi_{\rho_0,\rho}$ be the completion of $\widetilde\Phi_0$ for this scalar product.  We denote the norm over $\widetilde\Phi_{\rho_0,\rho}$ by $\Vert\cdot\Vert_{\widetilde\Phi_{\rho_0,\rho}}$ such that 
\begin{equation} 
\label{eq:norm_PHI}
	\|\ph\|_{\widetilde\Phi_{\rho_0,\rho}}^2 :=\|\rho^{-1}_0\ph\|^2_{L^2(q_T)} + \eta \| \rho^{-1}L^{\star}\ph\|^2_{L^2(Q_T)}, \quad \forall \ph \in \widetilde\Phi_{\rho_0,\rho}.
\end{equation}
Finally, we defined the closed subset $\widetilde W_{\rho_0,\rho}$ of $\widetilde\Phi_{\rho_0,\rho}$ by 
\[
\widetilde W_{\rho_0,\rho}=\{\ph\in \widetilde\Phi_{\rho_0,\rho} : \rho^{-1}L^{\star}\ph=0 \,\, \textrm{in}\,\, L^2(Q_T)\}
\]
and we endow $\widetilde W_{\rho_0,\rho}$ with the same norm than $\widetilde\Phi_{\rho_0,\rho}$. 

We then define the following extremal problem : 
\begin{equation}
\label{eq:min_eps_refor3}
	\min_{\ph\in \widetilde W_{\rho_0,\rho}} \hat{J}^{\star}(\ph)={1\over2} \jjntqT \rho_0^{-2}|\ph(x, t)|^2 dx\,dt + (y_0,\, \ph(\cdot,0))_{L^2(\Omega)}.
\end{equation}
For any $\ph\in \widetilde W_{\rho_0,\rho}$, $L^{\star}\ph=0$ a.e. in $Q_T$ and $\Vert \ph\Vert_{\widetilde W_{\rho_0,\rho}}=\Vert \rho_0^{-1}\ph\Vert_{L^2(q_T)}$ so that $\ph(\cdot,T)$ belongs by definition to the abstract space $\mathcal{H}$: consequently, extremal problems \eqref{eq:min_eps_refor3} and (\ref{eq:Jb}) are equivalent. In particular, from the regularizing property of the heat kernel, $\ph(\cdot,0)$ belongs to $L^2(\Omega)$ and the linear term in $\ph$ in $\hat{J}^{\star}$ is well defined.

Then, we consider the following mixed formulation~: find $(\ph,\lambda)\in \widetilde \Phi_{\rho_0,\rho}\times L^2(Q_T)$ solution of 
\begin{equation} 
\label{eq:mf3}
	\left\{
		\begin{array}{rcll}
			\noalign{\smallskip} 
			\tilde a(\ph, \phbar) + \tilde b(\phbar,\lambda) 
			& = &\tilde l(\phbar), & \quad \forall \phbar \in\widetilde\Phi_{\rho_0,\rho} \\
			\noalign{\smallskip}\tilde b( \ph, \lambar) & = & 0, & \quad \forall \lambar \in L^2(Q_T),
		\end{array}
	\right.
\end{equation}
	where
\begin{align}
	\nonumber
	&\tilde a :\widetilde\Phi_{\rho_0,\rho} \times\widetilde\Phi_{\rho_0,\rho} \to \mathbb{R},  \quad \tilde a(\ph, \phbar) 
			   = \jjntqT \rho_0^{-2}\ph \,\phbar\, dx\,dt  \\
	\nonumber
	&\tilde b:\widetilde\Phi_{\rho_0,\rho} \times L^2(Q_T)\to \mathbb{R}, \quad \tilde b(\ph, \lambda) = -\jjntQT\rho^{-1}L^\star\ph \,\lambda\,dx\,dt\\
	\nonumber
	&\tilde l:\widetilde  \Phi_{\rho_0,\rho} \to \mathbb{R},  \quad\tilde  l(\ph) =- (y_0, \ph(\cdot, 0))_{L^2(\Omega)}.
\end{align}

Before studying this mixed formulation, let us do the following comment. The continuity of $\tilde l$ over the space $\widetilde \Phi_{\rho_0,\rho}$ holds true for a precise choice of the weights which appear in Carleman type estimates for parabolic equations (see \cite{FursikovImanuvilov}): we recall the following important result.  

\begin{proposition}[ \cite{FursikovImanuvilov}]
Let the weights $\rho^c,\rho_0^c \in \mathcal{R}$ (see \eqref{def_spaceR}) be defined  as follows :
 \begin{equation}
\label{weights}
\begin{aligned}
& \rho^c(x,t):=  \exp\left({\beta(x) \over T-t}\right), \quad \beta(x):=  K_{1}\left(e^{K_{2}} \!-\! e^{\beta_{0}(x)}\right), \\
& \rho_0^c(x,t):=  (T-t)^{3/2}\rho^c(x,t), 
\end{aligned}
\end{equation}
  where the $K_{i}$ are sufficiently large positive constants (depending on $T$, $c_0$ and $\|c\|_{C^1(\overline{\Omega})}$) such that 
\[
\beta_{0} \in C^\infty(\overline{\Omega}), \,\beta > 0\quad\textrm{in}\quad \Omega,\, \beta=0\quad\textrm{on}\quad \partial\Omega, \quad\textrm{Supp}(\nabla\beta) \subset \overline{\Omega}\setminus\omega.
\]
Then, there exists a constant $C>0$, depending only on $\omega, T$,  such that 
\begin{equation}
\Vert \ph(\cdot,0)\Vert_{L^2(\Omega)} \leq C \Vert \ph\Vert_{\widetilde\Phi_{\rho_0^c,\rho^c}}, \quad \forall \ph\in \widetilde\Phi_{\rho_0^c,\rho^c}.   \label{crucial_estimate}
\end{equation}
\end{proposition}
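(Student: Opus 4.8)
The plan is to recognize that the inequality \eqref{crucial_estimate} is essentially a reformulation of the classical global Carleman estimate for the backward heat equation, so the proof reduces to matching the weighted norm $\|\cdot\|_{\widetilde\Phi_{\rho_0^c,\rho^c}}$ against the left- and right-hand sides of the standard Carleman inequality. First I would recall the Fursikov--Imanuvilov Carleman estimate in its usual form: there exist weight functions $\alpha(x,t) = (e^{K_2}-e^{\beta_0(x)})/(t(T-t))$ (or the analogous one-sided version adapted to $t\to T^-$, which is what the weights in \eqref{weights} encode) and $\xi(x,t)$ comparable to $1/(T-t)$, together with $s_0,C>0$, such that for all $s\ge s_0$ and all smooth $\ph$ vanishing on $\Sigma_T$,
\[
s^{-1}\!\!\jjntQT e^{-2s\alpha}\xi^{-1}\bigl(|\ph_t|^2+|\nabla^2\ph|^2\bigr)dx\,dt + s\!\!\jjntQT e^{-2s\alpha}\xi\,|\nabla\ph|^2 dx\,dt + s^3\!\!\jjntQT e^{-2s\alpha}\xi^3|\ph|^2 dx\,dt
\]
\[
\leq C\Bigl(\jjntQT e^{-2s\alpha}|L^\star\ph|^2 dx\,dt + s^3\!\!\jjntqT e^{-2s\alpha}\xi^3|\ph|^2 dx\,dt\Bigr).
\]
The key observation is that, after fixing $s$ large enough and absorbing it into the constants $K_1,K_2$, the weight $e^{-2s\alpha}$ is (up to bounded factors) $\rho^{-2}$ with $\rho=\rho^c$ as in \eqref{weights}, while $e^{-2s\alpha}\xi^{-3}\sim (T-t)^3\rho^{-2} = (\rho_0^c)^{-2}$; thus the right-hand side of the Carleman estimate is controlled by a constant times $\|\rho^{-1}L^\star\ph\|_{L^2(Q_T)}^2 + \|\rho_0^{-1}\ph\|_{L^2(q_T)}^2 = \eta^{-1}\|\ph\|_{\widetilde\Phi_{\rho_0^c,\rho^c}}^2$ up to the $\eta$ normalization.

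Next I would extract from the left-hand side of the Carleman estimate only the term $s^3\!\iint_{Q_T} e^{-2s\alpha}\xi^3|\ph|^2$, and use the standard energy dissipation argument to bound $\|\ph(\cdot,0)\|_{L^2(\Omega)}^2$ by it. Concretely, since $e^{-2s\alpha}\xi^3$ is bounded below by a positive constant on the slab $\Omega\times(0,T/2)$, one gets $\iint_{\Omega\times(0,T/2)}|\ph|^2\,dx\,dt \le C(\text{RHS})$; then a Gronwall / energy estimate for the backward heat equation $L^\star\ph = g$ with $g=L^\star\ph\in L^2(Q_T)$ controls $\sup_{t\in[0,T/2]}\|\ph(\cdot,t)\|_{L^2(\Omega)}^2$ — and in particular $\|\ph(\cdot,0)\|_{L^2(\Omega)}^2$ — by $\iint_{\Omega\times(0,T/2)}|\ph|^2 + \iint_{Q_T}|g|^2$, both of which are already bounded by the right-hand side. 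Chaining these inequalities yields $\|\ph(\cdot,0)\|_{L^2(\Omega)}^2 \le C\bigl(\|\rho^{-1}L^\star\ph\|_{L^2(Q_T)}^2 + \|\rho_0^{-1}\ph\|_{L^2(q_T)}^2\bigr)$, which is \eqref{crucial_estimate} (with the constant depending on $\eta$ only through an innocuous factor, or stated for the unweighted combination and then trivially adjusted). Finally I would note that the estimate extends from $\ph\in\widetilde\Phi_0$ to all $\ph\in\widetilde\Phi_{\rho_0^c,\rho^c}$ by density, since both sides are continuous for the $\widetilde\Phi_{\rho_0^c,\rho^c}$-norm.

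The main obstacle, and the step requiring the most care, is the bookkeeping between the two families of weights: verifying that with the explicit choice $\beta(x) = K_1(e^{K_2}-e^{\beta_0(x)})$, $\rho_0^c = (T-t)^{3/2}\rho^c$ in \eqref{weights}, one can choose $K_1,K_2$ (equivalently the Carleman parameters $s$ and $\lambda$) large enough so that the pointwise comparisons $\rho^{-2}\lesssim e^{-2s\alpha}$ on $Q_T$ and $e^{-2s\alpha}\xi^{3}\lesssim \rho_0^{-2}$ on $q_T$ hold with constants independent of $\ph$ — this is precisely the standard, if slightly technical, dictionary between ``Carleman weights'' and the ``blow-up at $T$'' weights used in the controllability literature, and it is the reason the specific form \eqref{weights} was imposed. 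One must also check that $\mathrm{Supp}(\nabla\beta)\subset\overline\Omega\setminus\omega$ is what allows the localization of the observation term to $q_T = \omega\times(0,T)$. Since all of this is contained in \cite{FursikovImanuvilov} (and the statement is quoted as a proposition attributed to that reference), the proof can legitimately be reduced to citing that estimate and performing the weight identification; no genuinely new analytic difficulty arises.
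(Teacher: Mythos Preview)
Your proposal is correct and follows the same route the paper indicates: the paper does not give a proof but simply states that \eqref{crucial_estimate} ``is a consequence of the celebrated global Carleman inequality'' from \cite{FursikovImanuvilov}, and your sketch spells out precisely that derivation (Carleman estimate, weight identification $\rho^c\leftrightarrow e^{s\alpha}$ and $\rho_0^c\leftrightarrow e^{s\alpha}\xi^{-3/2}$, energy/dissipation to recover $\|\ph(\cdot,0)\|_{L^2}$, density). You supply considerably more detail than the paper itself, but the approach is the same.
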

The estimate (\ref{crucial_estimate}) is a consequence of the celebrated global Carleman inequality satisfied by the solution of (\ref{eq:wr}), introduced and popularized in \cite{FursikovImanuvilov}.
It allows to obtain the following existence and uniqueness result : 
\begin{theorem}\label{th:mf3} Let $\rho_0\in \mathcal{R}$ and $\rho\in \mathcal{R}\cap L^{\infty}(Q_T)$ and assume that there exists a positive constant $K$ such that 
\begin{equation}
\rho_0\leq K \rho_0^c, \quad \rho\leq K \rho^c \quad \textrm{in}\quad Q_T.    \label{hypK}
\end{equation}
Then, we have :
	\begin{enumerate}
	\item 
		The mixed formulation \eqref{eq:mf3} defined over $\widetilde\Phi_{\rho_0,\rho}\times L^2(Q_T)$ is well-posed.
	\item 
		The unique solution $(\ph,\lambda)\in \widetilde \Phi_{\rho_0,\rho}\times L^2(Q_T)$ is the unique saddle-point of the Lagrangian ${\mathcal{\widetilde L} :  \widetilde \Phi_{\rho_0,\rho}\times L^2(Q_T) \to \mathbb{R}}$ defined by
		\begin{equation}
		\label{eq:calLtilde}
			\mathcal{\widetilde L}(\ph, \lambda) = {1\over2}\tilde a(\ph, \ph) +\tilde b(\ph, \lambda)  - \tilde l(\ph).
		\end{equation}
	\item  The optimal function $\ph$ is the minimizer of $\hat{J}^{\star}$ over $\widetilde \Phi_{\rho_0,\rho}$ while
	$\rho^{-1}\lambda\in L^2(Q_T)$ is the state of the heat equation \eqref{eq:heat} in the weak sense.
\end{enumerate}
\end{theorem}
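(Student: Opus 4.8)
The plan is to mimic the structure of the proof of Theorem \ref{th:mf1}, invoking the Brezzi--Fortin theory for mixed formulations (see \cite{BrezziFortin}): the well-posedness of \eqref{eq:mf3} follows from (a) the continuity of the three forms $\tilde a$, $\tilde b$, $\tilde l$, (b) the coercivity of $\tilde a$ on the kernel $\mathcal{N}(\tilde b)$, and (c) an inf-sup condition for $\tilde b$ over $\widetilde\Phi_{\rho_0,\rho}\times L^2(Q_T)$. Points $(ii)$ and $(iii)$ of the statement are then purely formal consequences, exactly as in Theorem \ref{th:mf1}: $(ii)$ uses the symmetry and positivity of $\tilde a$, and $(iii)$ uses that $\tilde b(\ph,\bar\lambda)=0$ for all $\bar\lambda\in L^2(Q_T)$ forces $\rho^{-1}L^\star\ph=0$ a.e., i.e.\ $\ph\in\widetilde W_{\rho_0,\rho}$, so that $\mathcal{\widetilde L}(\ph,\lambda)=\hat J^\star(\ph)$; then the first equation of \eqref{eq:mf3}, read against test functions and using the interpretation $\rho^{-1}\lambda$ as the controlled state via the transposition formulation, identifies $\rho^{-1}\lambda$ with the weak solution of \eqref{eq:heat}.

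First I would record the continuity of the forms. Continuity of $\tilde a$ and $\tilde b$ on the relevant product spaces is immediate from the definition of the norm \eqref{eq:norm_PHI} and Cauchy--Schwarz (for $\tilde b$ one uses that $\|\rho^{-1}L^\star\ph\|_{L^2(Q_T)}\le\eta^{-1/2}\|\ph\|_{\widetilde\Phi_{\rho_0,\rho}}$). The continuity of $\tilde l$ is precisely where the Carleman-type estimate \eqref{crucial_estimate} enters: by hypothesis \eqref{hypK} we have $\rho_0\le K\rho_0^c$ and $\rho\le K\rho^c$, so $\|\ph\|_{\widetilde\Phi_{\rho_0^c,\rho^c}}\le \max(K,K)\,\|\ph\|_{\widetilde\Phi_{\rho_0,\rho}}$ (more precisely $\rho_0^{-1}\le K^{-1}\rho_0^{c,-1}$ would go the wrong way, so one must be careful: the correct chain is $\|(\rho_0^c)^{-1}\ph\|_{L^2(q_T)}\le K\|\rho_0^{-1}\ph\|_{L^2(q_T)}$ since $(\rho_0^c)^{-1}\le K\rho_0^{-1}$, and likewise for $\rho$), whence $\|\ph\|_{\widetilde\Phi_{\rho_0^c,\rho^c}}\le K\|\ph\|_{\widetilde\Phi_{\rho_0,\rho}}$ and then \eqref{crucial_estimate} gives $\|\ph(\cdot,0)\|_{L^2(\Omega)}\le CK\|\ph\|_{\widetilde\Phi_{\rho_0,\rho}}$, so $|\tilde l(\ph)|\le CK\|y_0\|_{L^2(\Omega)}\|\ph\|_{\widetilde\Phi_{\rho_0,\rho}}$. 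Coercivity of $\tilde a$ on $\mathcal{N}(\tilde b)=\widetilde W_{\rho_0,\rho}$ is then clear, since on this kernel $\rho^{-1}L^\star\ph=0$ and hence $\tilde a(\ph,\ph)=\|\rho_0^{-1}\ph\|_{L^2(q_T)}^2=\|\ph\|_{\widetilde\Phi_{\rho_0,\rho}}^2$.

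For the inf-sup condition, given $\lambda^0\in L^2(Q_T)$ I would solve the backward heat equation
\[
	L^\star\ph^0=-\rho^{-1}\lambda^0\ \textrm{in}\ Q_T,\qquad \ph^0=0\ \textrm{on}\ \Sigma_T,\qquad \ph^0(\cdot,T)=0\ \textrm{in}\ \Omega .
\]
Here the hypothesis $\rho\in\mathcal{R}\cap L^\infty(Q_T)$ is used, so $\rho^{-1}\lambda^0\in L^2(Q_T)$ and energy estimates give $\|\ph^0\|_{L^2(q_T)}\le\|\ph^0\|_{L^2(Q_T)}\le C_{\Omega,T}\|\rho^{-1}\lambda^0\|_{L^2(Q_T)}\le C_{\Omega,T}\rho_\star^{-1}\|\lambda^0\|_{L^2(Q_T)}$, so $\|\rho_0^{-1}\ph^0\|_{L^2(q_T)}\le\rho_\star^{-1}C_{\Omega,T}\rho_\star^{-1}\|\lambda^0\|_{L^2(Q_T)}$; since also $\rho^{-1}L^\star\ph^0=-\rho^{-2}\lambda^0$, but more simply $\eta\|\rho^{-1}L^\star\ph^0\|_{L^2(Q_T)}^2=\eta\|\rho^{-2}\lambda^0\|^2\le\eta\rho_\star^{-4}\|\lambda^0\|^2$, we get $\ph^0\in\widetilde\Phi_{\rho_0,\rho}$ with $\|\ph^0\|_{\widetilde\Phi_{\rho_0,\rho}}^2\le(\rho_\star^{-4}C_{\Omega,T}^2+\eta\rho_\star^{-4})\|\lambda^0\|_{L^2(Q_T)}^2$. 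Then $\tilde b(\ph^0,\lambda^0)=-\iint_{Q_T}\rho^{-1}L^\star\ph^0\,\lambda^0=\iint_{Q_T}\rho^{-2}|\lambda^0|^2\ge\rho_\star^{-2}\|\lambda^0\|^2$ — wait, this requires $\rho$ bounded above as well so that $\rho^{-2}\ge(\sup\rho)^{-2}>0$, which holds since $\rho\in L^\infty(Q_T)$; so $\tilde b(\ph^0,\lambda^0)\ge\|\rho\|_\infty^{-2}\|\lambda^0\|_{L^2(Q_T)}^2$. Dividing and using the bound on $\|\ph^0\|_{\widetilde\Phi_{\rho_0,\rho}}$ yields a lower bound $\delta>0$ independent of $\lambda^0$. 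The main obstacle in this argument is precisely bookkeeping the direction of all the weight inequalities — $\rho\in L^\infty$ is what makes $\tilde b(\ph^0,\lambda^0)$ coercive in $\|\lambda^0\|_{L^2(Q_T)}$, while $\rho_\star>0$ and \eqref{hypK} together with the Carleman estimate are what make $\tilde l$ continuous and $\ph^0$ an admissible element of $\widetilde\Phi_{\rho_0,\rho}$; everything else is a transcription of Theorem \ref{th:mf1}.
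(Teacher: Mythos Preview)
Your proposal is correct and follows essentially the same approach as the paper: Brezzi--Fortin well-posedness via continuity, coercivity on the kernel, and an inf-sup condition, with $(ii)$ and $(iii)$ deduced exactly as in Theorem~\ref{th:mf1}. The only noteworthy difference is your choice of test function in the inf-sup step: the paper takes $\ph^0$ solving $\rho^{-1}L^\star\ph^0=-\lambda^0$ (i.e.\ $L^\star\ph^0=-\rho\,\lambda^0$), which gives $\tilde b(\ph^0,\lambda^0)=\|\lambda^0\|_{L^2(Q_T)}^2$ on the nose and lets $\|\rho\|_{L^\infty}$ enter only when bounding $\|\rho_0^{-1}\ph^0\|_{L^2(q_T)}\le\rho_\star^{-1}\|\rho\|_{L^\infty}\|\lambda^0\|_{L^2(Q_T)}$, yielding the explicit constant $\delta=(\rho_\star^{-2}\|\rho\|_{L^\infty}^2+\eta)^{-1/2}$; your choice $L^\star\ph^0=-\rho^{-1}\lambda^0$ also works but forces you to bound $\iint\rho^{-2}|\lambda^0|^2$ from below, which is where your argument briefly stumbles before recovering. (Incidentally, your constant $CK$ in the continuity of $\tilde l$ is the right one; the paper's $CK^{-1}$ appears to be a typo.)
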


\begin{proof} The proof is similar to the proof of Theorem \ref{th:mf1}. From the definition, the bilinear form $\tilde a$ is continuous over $\widetilde\Phi_{\rho_0,\rho}\times \widetilde\Phi_{\rho_0,\rho}$, symmetric and positive and the bilinear form $\tilde b$ is continuous over $\widetilde\Phi_{\rho_0,\rho}\times L^2(Q_T)$. Furthermore, the continuity of the linear form $\tilde l$ over $\widetilde\Phi_{\rho_0,\rho}$ is the consequence of the estimate \eqref{crucial_estimate}: precisely, from the assumptions \eqref{hypK}, the inclusion $\widetilde\Phi_{\rho_0,\rho}\subset \widetilde\Phi_{\rho_0^c,\rho^c}$ hold true. Therefore, estimate \eqref{crucial_estimate} implies 
\begin{equation}
\Vert \ph(\cdot,0)\Vert_{L^2(\Omega)} \leq C \Vert \ph\Vert_{\widetilde\Phi_{\rho_0^c,\rho^c}} \leq C K^{-1} \Vert \ph\Vert_{\widetilde\Phi_{\rho_0,\rho}}, \quad \forall \ph\in \widetilde\Phi_{\rho_0,\rho}.  
\end{equation}
Therefore, the well-posedeness of the formulation \eqref{eq:mf3} is the consequence of two properties: first, the coercivity of the form $\tilde a$ on the kernel $\mathcal{N}(\tilde b):=\{\ph\in\widetilde \Phi_{\rho_0,\rho} : \tilde{b}(\ph,\lambda)=0\, \forall \lambda\in L^2(Q_T)\}$: again, this holds true since the kernel coincides with the space $\widetilde W_{\rho_0,\rho}$. Second, the inf-sup property which reads as : 
\begin{equation}
\label{eq:infsup3}
\inf_{\lambda \in L^2(Q_T)} \sup_{\ph\in \widetilde\Phi_{\rho_0,\rho}} \frac{\tilde b(\ph, \lambda)}{\|\ph\|_{\widetilde\Phi_{\rho_0,\rho}} \|\lambda\|_{L^2(Q_T)}} \geq \delta
\end{equation}
for some $\delta >0$.
For any fixed $\lambda^0\in L^2(Q_T)$, we define the unique element $\ph^0$ solution of 
\begin{equation}
\rho^{-1}L^{\star}\ph=-\lambda^0 \, \textrm{in}\, Q_T, \quad \ph=0\,\textrm{ on }\,\Sigma_T, \quad \ph(\cdot,T)=0 \,\, \textrm{in}\,\, \Omega \nonumber.
\end{equation}
Using energy estimates, we have 
\begin{equation}
\Vert \rho_0^{-1}\ph^0\Vert_{L^2(q_T)} \leq \rho_{\star}^{-1} \Vert \ph^0\Vert_{L^2(Q_T)}  \leq \rho_{\star}^{-1}\Vert \rho \lambda^0\Vert_{L^2(Q_T)} \leq \rho_{\star}^{-1}\Vert \rho\Vert_{L^{\infty}(Q_T)} \Vert \lambda^0\Vert_{L^2(Q_T)}   \label{estimate_L2L2}
\end{equation}
which proves that $\ph^0\in\widetilde \Phi_{\rho_0,\rho}$ and that  
\[
	\sup_{\ph \in \widetilde\Phi_{\rho_0,\rho}} \frac{\tilde b(\ph, \lambda^0)}{\|\ph\|_{\widetilde\Phi_{\rho_0,\rho}} \|\lambda^0\|_{L^2(Q_T)}} 
	\geq \frac{\tilde b(\ph^0, \lambda^0)}{\|\ph^0\|_{\widetilde\Phi_{\rho_0,\rho}} \|\lambda^0\|_{L^2(Q_T)}} 
	= \frac{\|\lambda^0\|_{L^2(Q_T)}}{\left( \|\rho^{-1}_0\ph^0\|^2_{L^2(q_T)} + \eta \|\lambda_0\|^2_{L^2(Q_T)} \right)^\frac{1}{2}}.
\]

	Combining the above two inequalities, we obtain
\[
	\sup_{\ph_0 \in \widetilde\Phi_{\rho_0,\rho}} \frac{b(\ph_0, \lambda_0)}{\|\ph_0\|_{\widetilde\Phi_{\rho_0,\rho}} \|\lambda_0\|_{L^2(Q_T)}} \geq \frac{1}{\sqrt{\rho_{\star}^{-2}\Vert \rho\Vert^2_{L^{\infty}(Q_T)}  + \eta}}
\]
	and, hence, \eqref{eq:infsup3} holds with $\delta = \left( \rho_{\star}^{-2}\Vert \rho\Vert^2_{L^{\infty}(Q_T)}+ \eta \right)^{-1/2}$.

The point $(ii)$ is again due to the positivity and symmetry of the form $\tilde a$.

Concerning the last point of the Theorem, the equality 
	$\tilde b(\ph,\lambar)=0$ for all $\lambar\in L^2(Q_T)$ implies that $\rho^{-1}L^{\star}\ph=0$ as an $L^2(Q_T)$ 
	function, so that if $(\ph,\lambda)\in\widetilde \Phi_{\rho,\rho_0}\times L^2(Q_T)$  solves the mixed formulation \eqref{eq:mf3}, 
	then $\ph\in\widetilde W_{\rho,\rho_0}$ and $\mathcal{\widetilde L}(\ph,\lambda)=\hat{J}^{\star}(\ph)$. 
	 Finally, the first equation of the mixed formulation \eqref{eq:mf3} reads as follows: 
\begin{equation}\nonumber
\begin{alignedat}{2}
	 &\jjntqT \rho_0^{-2}\ph \,\phbar\, dx\,dt 
	- \jjntQT\rho^{-1}L^\star\phbar\lambda\,dx\,dt
	 = \tilde l(\phbar), \quad \forall \phbar\in \widetilde\Phi,
\end{alignedat}
\end{equation}
	or equivalently, since the control is given by $v:=\rho_0^{-2}\ph\,1_{\omega}$, 
\begin{equation}\nonumber
\begin{alignedat}{2}
	\iint_{q_T}v \,\phbar\, dx\,dt 
	 - \iint_{Q_T}L^\star\phbar(\rho^{-1}\lambda)\,dx\,dt
	 = \tilde l(\phbar), \quad \forall \phbar\in\widetilde \Phi,
\end{alignedat}
\end{equation}
This means that $\rho^{-1}\lambda\in  L^2(Q_T)$ is solution of the heat equation with source term $v \,1_{\omega}$ in the transposition sense and such that $(\rho^{-1}\lambda)(\cdot,T)=0$. 
	Since $y_0\in L^2(\Om)$ and $v\in L^2(q_T)$,~ $\rho^{-1}\lambda$ must coincide with the unique weak solution 
	to~\eqref{eq:heat} ($y=\rho^{-1} \lambda$) and, in particular, $y(\cdot,T)=0$. 
\end{proof}	
	
\begin{remark}
The well-posedness of the mixed formulation \eqref{eq:mf3}, precisely the inf-sup property \eqref{eq:infsup3}, is open in the case where the weight $\rho$ is simply in $\mathcal{R}$ $($instead of $\mathcal{R}\cap L^{\infty}(Q_T))$: in this case, the weight $\rho$ may blow up at $t\to T^{-}$.  In order to get \eqref{eq:infsup3}, it suffices to prove that the function $\psi:=\rho_0^{-1}\ph$ solution of the boundary value problem
\begin{equation}
\rho^{-1}L^{\star}(\rho_0 \psi) =-\lambda^0 \,\, \textrm{in}\,\, Q_T, \quad \psi=0\,\,\textrm{on}\,\,\Sigma_T, \quad \psi(\cdot,T)=0 \,\, \textrm{in}\,\, \Omega \nonumber
\end{equation}
for any $\lambda_0\in L^2(Q_T)$ satisfies the following estimate for some positive constant $C$  
\begin{equation}
\Vert \psi \Vert_{L^2(q_T)}   \leq  C \Vert \rho^{-1}L^{\star}(\rho_0 \psi)\Vert_{L^2(Q_T)}. \nonumber
\end{equation}
In the cases of interest for which the weights $\rho_0$ and $\rho$ blow up at $t\to T^{-}$ (for instance given by $\rho_0^c$ and $\rho^c$), this estimates is open and does not seem to be a consequence of the estimate \eqref{crucial_estimate}.  
\end{remark}

Let us now comment the introduction of the weight $\rho$. The solution $\ph$ of the mixed formulation \eqref{eq:mf3} belongs to $\widetilde W_{\rho_0,\rho}$ and therefore does not depend on the weight $\rho$ (recall that $\rho$ is strictly positive); this is in agreement with the fact that $\rho$ does not appear in the original formulation formulation \eqref{eq:Jb}. Therefore, this weight may be seen as a parameter to improve some specific properties of the mixed formulation, specifically at the numerical level. Precisely, in the limit case $\eps=0$, we recall that the trace $\ph|_{t=T}$ of the solution does not belong to $L^2(\Omega)$ but to a much larger and singular space. Very likely, a similar behavior occurs for the function $L^{\star}\ph$ near $\Om\times \{T\}$ so that the constraint $L^{\star}\ph=0$ in $L^2(Q_T)$
introduced in Section \ref{penalized_one} is too ``strong" and must be replaced at the limit in $\eps$ by the relaxed one $\rho^{-1}L^{\star}\ph =0$ in $L^2(Q_T)$ with $\rho^{-1}$ ``small" near $\Om\times \{T\}$. 
Remark that this is actually the effect and the role of the Carleman type weights $\rho^c$ defined by (\ref{weights}) and initially introduced in \cite{FursikovImanuvilov}.

As in Section \ref{penalized_one}, it is convenient to ``augment" the Lagrangian and consider instead the Lagrangian $\mathcal{L}_{r}$ defined for any $r>0$ by
\[
	\left\{
		\begin{aligned}
			& \mathcal{L}_{r}(\ph,\lambda):= \frac{1}{2}\tilde a_r(\ph,\ph) + \tilde b(\ph,\lambda) - \tilde l(\ph), \\ 
			& \tilde a_r(\ph,\ph):=\tilde a(\ph,\ph) +r \jjntQT \vert \rho^{-1} L^{\star}\ph\vert^2 \, dx\,dt.
		\end{aligned}   
	\right.
\]

Finally, similarly to Lemma \ref{propA} and Proposition \ref{prop_equiv_dual}, we have the following result.

	Let $\rho_0\in \mathcal{R}$ and $\rho\in \mathcal{R}\cap L^{\infty}(Q_T)$ 
\begin{proposition}\label{prop_equiv_rho}
	For any $r>0$, let $\rho_0\in \mathcal{R}$ and $\rho\in \mathcal{R}\cap L^{\infty}(Q_T)$ verifying \eqref{hypK}. 
	Let us define the linear operator $\mathcal{A}_r$ from $L^2(Q_T)$ into $L^2(Q_T)$ by 
\begin{equation}
	\mathcal{A}_r \lambda : =\rho^{-1}L^{\star}\ph, \quad\forall \lambda\in L^2(Q_T), \nonumber
\end{equation}
	where $\ph \in \widetilde\Phi_{\rho_0,\rho}$ is the unique solution to
\begin{equation}
	a_r(\ph, \phbar) = -b(\phbar, \lambda), \quad \forall \phbar \in \widetilde\Phi_{\rho_0,\rho}.    \nonumber
\end{equation}
	$\mathcal{A}_r$ is a strongly elliptic, symmetric isomorphism from $L^2(Q_T)$ into $L^2(Q_T)$. 
	Let  $\hat{J}_r^{\star\star}$ be the functional defined by 
\[
	\hat{J}_r^{\star\star}: L^2(Q_T)\mapsto L^2(Q_T),\quad\hat{J}_r^{\star\star}(\lambda) := \frac{1}{2} \jjntQT (\mathcal{A}_r \lambda) \,\lambda\, dx \,dt - \tilde b(\ph^0, \lambda).
\]
	where $\ph^0\in \widetilde\Phi_{\rho_0,\rho}$ is the unique solution of 
\[
	\tilde a_r(\ph^0,\phbar)= \tilde l(\phbar), \quad \forall \phbar\in \widetilde\Phi_{\rho_0,\rho}.
\]
	The following equality holds : 
\[
	\sup_{\lambda\in L^2(Q_T)}\inf_{\ph\in \widetilde\Phi_{\rho_0,\rho}} \mathcal{L}_r(\ph,\lambda) 
	= - \inf_{\lambda\in L^2(Q_T)} \hat{J}_r^{\star\star}(\lambda)~~+~ \mathcal{L}_r(\ph^0,0).
\]
\end{proposition}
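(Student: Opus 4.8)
The plan is to mirror exactly the two-part argument already carried out for the penalized case in Lemma \ref{propA} and Proposition \ref{prop_equiv_dual}, replacing the form $a_{\eps,r}$ by $\tilde a_r$, the bilinear form $b$ by $\tilde b$, the space $\Phi_\eps$ by $\widetilde\Phi_{\rho_0,\rho}$, and the operator $L^\star\ph$ by $\rho^{-1}L^\star\ph$. First I would establish that $\mathcal{A}_r$ is well-defined: for $r>0$ the augmented form $\tilde a_r(\ph,\phbar)=\iint_{q_T}\rho_0^{-2}\ph\,\phbar\,dx\,dt + r\iint_{Q_T}\rho^{-2}L^\star\ph\,L^\star\phbar\,dx\,dt$ is, up to the constant $\min(1,r/\eta)$, equivalent to the inner product defining $\|\cdot\|_{\widetilde\Phi_{\rho_0,\rho}}$ in \eqref{eq:norm_PHI}, so Lax--Milgram gives a unique solution $\ph\in\widetilde\Phi_{\rho_0,\rho}$ of $\tilde a_r(\ph,\phbar)=-\tilde b(\phbar,\lambda)$ for each $\lambda\in L^2(Q_T)$, and hence $\mathcal{A}_r\lambda:=\rho^{-1}L^\star\ph$ is a well-defined linear map into $L^2(Q_T)$.

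Next I would prove the properties of $\mathcal{A}_r$. Continuity and the bound $\|\mathcal{A}_r\lambda\|_{L^2(Q_T)}\le r^{-1}\|\lambda\|_{L^2(Q_T)}$ follow by testing the defining equation with $\phbar=\ph$ and using $\tilde a_r(\ph,\ph)\ge r\|\rho^{-1}L^\star\ph\|^2_{L^2(Q_T)}$ together with Cauchy--Schwarz on $\tilde b(\ph,\lambda)$. Symmetry and positivity come from the identity $\iint_{Q_T}(\mathcal{A}_r\lambda')\lambda\,dx\,dt = \tilde a_r(\ph,\ph')$ — the exact analogue of \eqref{arAlambda} — obtained by testing the equation for $\ph'$ with $\phbar=\ph$ (and symmetrically). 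Strong ellipticity is then proved by the same contradiction argument as in Lemma \ref{propA}: assuming a sequence $\lambda_n$ with $\|\lambda_n\|_{L^2(Q_T)}=1$ and $\iint_{Q_T}(\mathcal{A}_r\lambda_n)\lambda_n\to 0$, one gets $\|\rho^{-1}L^\star\ph_n\|_{L^2(Q_T)}\to0$ and $\|\rho_0^{-1}\ph_n\|_{L^2(q_T)}\to0$; introducing $\phbar_n$ solving $\rho^{-1}L^\star\phbar_n = r\rho^{-1}L^\star\ph_n-\lambda_n$ with zero data (well-posed and giving $\phbar_n\in\widetilde\Phi_{\rho_0,\rho}$ via the energy estimate \eqref{estimate_L2L2}, which uses $\rho\in L^\infty(Q_T)$), testing the $\ph_n$-equation with $\phbar=\phbar_n$ yields $\|r\rho^{-1}L^\star\ph_n-\lambda_n\|_{L^2(Q_T)}\le \rho_\star^{-1}\|\rho\|_{L^\infty(Q_T)}C_{\Omega,T}\|\rho_0^{-1}\ph_n\|_{L^2(q_T)}\to0$, forcing $\|\lambda_n\|_{L^2(Q_T)}\to0$, a contradiction. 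Once $\mathcal{A}_r$ is a strongly elliptic symmetric isomorphism, the quadratic functional $\hat J_r^{\star\star}$ is strictly convex and coercive, so its minimization over $L^2(Q_T)$ is well-posed.

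For the final identity I would follow Proposition \ref{prop_equiv_dual} verbatim. For fixed $\lambda$, let $\ph_\lambda\in\widetilde\Phi_{\rho_0,\rho}$ be the (unique, by ellipticity of $\tilde a_r$) minimizer of $\ph\mapsto\mathcal{L}_r(\ph,\lambda)$, characterized by $\tilde a_r(\ph_\lambda,\phbar)+\tilde b(\phbar,\lambda)=\tilde l(\phbar)$ for all $\phbar$; decompose $\ph_\lambda=\psi_\lambda+\ph^0$ where $\ph^0$ solves $\tilde a_r(\ph^0,\phbar)=\tilde l(\phbar)$ and $\psi_\lambda$ solves $\tilde a_r(\psi_\lambda,\phbar)+\tilde b(\phbar,\lambda)=0$. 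Expanding $\mathcal{L}_r(\psi_\lambda+\ph^0,\lambda)$ into three terms exactly as there, the cross term $X_2=\tilde a_r(\ph^0,\psi_\lambda)-\tilde l(\psi_\lambda)$ vanishes by definition of $\ph^0$, the term $X_3=\tfrac12\tilde a_r(\ph^0,\ph^0)-\tilde l(\ph^0)=\mathcal{L}_r(\ph^0,0)$, and $X_1=\tfrac12\tilde b(\psi_\lambda,\lambda)+\tilde b(\ph^0,\lambda)$. Using the defining equation of $\psi_\lambda$ with $\phbar=\psi_\lambda$ gives $\tilde b(\psi_\lambda,\lambda)=-\tilde a_r(\psi_\lambda,\psi_\lambda)$; on the other hand, by the definition of $\mathcal{A}_r$ one has $\psi_\lambda = \ph$ with $\ph$ the solution associated to $\lambda$, so $\tilde b(\psi_\lambda,\lambda)=-\iint_{Q_T}(\mathcal{A}_r\lambda)\lambda\,dx\,dt$ (here I note a harmless inconsistency in the proposition's display: the defining equation should read $\tilde a_r$, not $a_r$, and $\tilde b$, not $b$). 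Hence $X_1=-\tfrac12\iint_{Q_T}(\mathcal{A}_r\lambda)\lambda\,dx\,dt+\tilde b(\ph^0,\lambda)=-\hat J_r^{\star\star}(\lambda)$, so $\inf_{\ph}\mathcal{L}_r(\ph,\lambda)=-\hat J_r^{\star\star}(\lambda)+\mathcal{L}_r(\ph^0,0)$, and taking the supremum over $\lambda\in L^2(Q_T)$ gives the claimed equality. The main obstacle is the strong ellipticity of $\mathcal{A}_r$: it is the one place where the hypothesis $\rho\in\mathcal{R}\cap L^\infty(Q_T)$ is genuinely used (to ensure $\phbar_n\in\widetilde\Phi_{\rho_0,\rho}$ via \eqref{estimate_L2L2}), and it relies on \eqref{crucial_estimate} only indirectly, through the continuity of $\tilde l$ already granted by Theorem \ref{th:mf3}; everything else is a routine transcription of the penalized case.
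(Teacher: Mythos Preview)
Your proposal is correct and matches exactly what the paper does: the paper gives no explicit proof of this proposition, stating only that it follows ``similarly to Lemma \ref{propA} and Proposition \ref{prop_equiv_dual}'', and your argument is precisely that transcription, with the weight $\rho$ inserted in the right places and the hypothesis $\rho\in L^\infty(Q_T)$ invoked at the one step (the construction of $\phbar_n$ and the energy estimate \eqref{estimate_L2L2}) where it is genuinely needed. Your observation about the notational slip ($a_r,b$ versus $\tilde a_r,\tilde b$) in the statement is also correct.
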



\section{Numerical approximation and experiments} \label{numerics}

\subsection{Discretization of the mixed formulation (\ref{eq:mf1})} \label{discretization_mf1}

We now turn to the discretization of the mixed formulation (\ref{eq:mf1}) assuming $r>0$. 
Let then $\Phi_{\eps,h}$ and $M_{\eps,h}$ be two finite dimensional spaces parametrized by the variable $h$ such that, for any $\varepsilon>0$,  
\[
\Phi_{\eps,h}\subset \Phi_{\eps}, \quad M_{\eps,h}\subset L^2(Q_T), \quad \forall h>0.
\]
Then, we can introduce the following approximated problems : 
find $(\ph_h,\lambda_h)\in \Phi_{\eps,h}\times M_{\eps,h}$ solution of 
\begin{equation} \label{eq:mf1h}
\left\{
\begin{array}{rcll}
\noalign{\smallskip} a_{\eps,r}(\ph_h, \overline{\ph}_h) + b(\overline{\ph}_h, \lambda_h) & = & l(\overline{\ph}_h), & \qquad \forall \overline{\ph}_h \in \Phi_{\eps,h} \\
\noalign{\smallskip} b(\ph_h, \overline{\lambda}_h) & = & 0, & \qquad \forall \overline{\lambda}_h \in M_{\eps,h}.
\end{array}
\right.
\end{equation}

The well-posedness of this mixed formulation is again a consequence of two properties : the coercivity of the bilinear form $a_{\eps,r}$ on the subset 
$\mathcal{N}_h(b)=\{\ph_h\in \Phi_{\eps,h}; b(\ph_h,\lambda_h)=0\quad \forall \lambda_h\in M_{\eps,h}\}$. Actually, from the relation 
$$
a_{\eps,r}(\ph,\ph)\geq C_{r,\eta}\Vert \ph\Vert^2_{\Phi_\eps}, \quad \forall \ph\in \Phi_{\eps},
$$
where~$C_{r,\eta}=\min\{1,r/\eta\}$,~the form  $a_{\eps,r}$ is coercive on the full space $\Phi_\eps$, and so \textit{a fortiori} on $\mathcal{N}_h(b)\subset \Phi_{\eps,h}\subset \Phi_\eps$.
The second property is a discrete inf-sup condition : there exists $\delta_h>0$ such that 
\begin{equation} \label{infsupdiscret}
\inf_{\lambda_h \in M_{\eps,h}}\sup_{\ph_h\in \Phi_{\eps,h}} \frac{b(\ph_h,\lambda_h)}{\Vert \ph_h\Vert_{\Phi_{\eps,h}} \Vert \lambda_h\Vert_{M_{\eps,h}}}\geq \delta_h.
\end{equation}
For any fixed $h$, the spaces $M_{\eps,h}$ and $\Phi_{\eps,h}$ are of finite dimension so that the infimum and supremum in (\ref{infsupdiscret}) are reached: moreover, from the property of the bilinear form $a_{\eps,r}$, it is standard to prove that $\delta_h$ is strictly positive (see Section \ref{infsuptest}). Consequently, for any fixed $h>0$, there exists a unique couple $(\ph_h,\lambda_h)$ solution of (\ref{eq:mf1h}). On the other hand, the property $\inf_h \delta_h>0$ is in general difficult to prove and depends strongly on the choice made for the approximated spaces $M_{\eps,h}$ and $\Phi_{\eps,h}$. We shall analyze numerically this property in Section \ref{infsuptest}.  

\begin{remark}\label{condr}
For $r=0$, the discrete mixed formulation (\ref{eq:mf1h}) is not well-posed over $\Phi_{\eps,h}\times M_{\eps,h}$ because the bilinear form $a_{\eps,r=0}$ is not coercive over the discrete kernel of $b$: the equality $b(\lambda_h,\ph_h)=0$ for all $\lambda_h \in M_{\eps,h}$ does not imply that $L^{\star}\ph_h$ vanishes. Therefore, the term $r \Vert L^{\star}\ph_h\Vert^2_{L^2(Q_T)}$ may be understood as a numerical stabilization term: for any $h>0$, it ensures the uniform coercivity of the form $a_{\eps,r}$ (and so the well-posedness) and vanishes at the limit in $h$. We also emphasize that this term is not a regularization term as it does not add any regularity to the solution $\ph_h$.
\end	{remark}

As in \cite{NC-EFC-AM}, the finite dimensional and conformal space $\Phi_{\eps,h}$ must be chosen such that $L^{\star}\ph_h$ belongs to $L^2(Q_T)$ for any $\ph_h\in \Phi_{\eps,h}$. This is guaranteed as soon as $\ph_h$ possesses second-order derivatives in $L^2_{loc}(Q_T)$. Any conformal approximation based on standard triangulation of $Q_T$ achieves this sufficient property as soon as it is generated by spaces of functions continuously differentiable with respect to the variable $x$ and spaces of continuous functions with respect to the variable $t$.

We introduce a triangulation $\mathcal{T}_h$ such that $\overline{Q_T}=\cup_{K\in \mathcal{T}_h} K$ and we assume that $\{\mathcal{T}_h\}_{h>0}$ is a regular family. Then, we introduce the space $\Phi_{\eps,h}$ as follows : 
\[
\Phi_{\eps,h}=\{\ph_h \in C^1(\overline{Q_T}):  \ph_h\vert_K\in \mathbb{P}(K) \quad \forall K\in \mathcal{T}_h, \,\, \ph_h=0 \,\,\textrm{on}\,\,\Sigma_T\}
\]
where $\mathbb{P}(K)$ denotes an appropriate space of polynomial functions in $x$ and $t$. In this work, we consider for $\mathbb{P}(K)$ the so-called \textit{Bogner-Fox-Schmit} (BFS for short) $C^1$-element defined for rectangles. 

In the one dimensional setting considered in the sequel, it involves $16$ degrees of freedom, namely the values of $\ph_h, \ph_{h,x}, \ph_{h,t}, \ph_{h,xt}$ on the four vertices of each rectangle $K$. 
Therefore $\mathbb{P}(K)= \mathbb{P}_{3,x}\otimes \mathbb{P}_{3,t}$ where $\mathbb{P}_{r,\xi}$ is by definition the space of polynomial functions of order $r$ in the variable $\xi$. We refer to \cite{ciarletfem} page 76.

We also define the finite dimensional space 
$$
M_{\eps,h}=\{\lambda_h\in C^0(\overline{Q_T}):  \lambda_h\vert_K\in \mathbb{Q}(K) \quad \forall K\in \mathcal{T}_h\},
$$
where $\mathbb{Q}(K)$ denotes the space of affine functions both in $x$ and $t$ on the element $K$. 

Again, in the one dimensional setting, for rectangle, we simply have $\mathbb{Q}(K)= \mathbb{P}_{1,x}\otimes \mathbb{P}_{1,t}$. 

We also mention that the approximation is conformal : for any $h>0$, we have $\Phi_{\eps,h}\subset \Phi_\eps$ and $M_{\eps,h}\subset L^2(Q_T)$. 

Let $n_h=\dim \Phi_{\eps,h}, m_h=\dim M_{\eps,h}$ and let the real matrices $A_{\eps,r,h}\in \mathbb{R}^{n_h,n_h}$, $B_{h}\in \mathbb{R}^{m_h,n_h}$, $J_h\in \mathbb{R}^{m_h,m_h}$ and $L_h\in \mathbb{R}^{n_h}$ be defined by   
\[
\left\{
\begin{aligned}
& a_{\eps,r}(\ph_h,\overline{\ph_h})= <A_{\eps,r,h} \{\ph_h\}, \{\overline{\ph_h}\}>_{\mathbb{R}^{n_h},\mathbb{R}^{n_h}}, \quad \forall \ph_h,\overline{\ph_h}\in \Phi_{\eps,h}, \\
& b(\ph_h,\lambda_h)= <B_{h} \{\ph_h\}, \{\lambda_h\}>_{\mathbb{R}^{m_h},\mathbb{R}^{m_h}}, \quad \forall \ph_h\in \Phi_{\eps,h}, \forall \lambda_h\in M_{\eps,h},\\
& \int\!\!\!\int_{Q_T} \lambda_h\overline{\lambda_h}\,dx\,dt= <J_h \{\lambda_h\}, \{\overline{\lambda_h}\}>_{\mathbb{R}^{m_h},\mathbb{R}^{m_h}},\quad \forall \lambda_h, \overline{\lambda_h}\in M_{\eps,h},\\
& l(\ph_h)=<L_h,\{\ph_h\}> , \quad \forall \ph_h\in \Phi_{\eps,h}
\end{aligned}
\right.
\]
where $\{\ph_h\}\in \mathbb{R}^{n_h}$ denotes the vector associated to $\ph_h$ and $<\cdot,\cdot>_{\mathbb{R}^{n_h},\mathbb{R}^{n_h}}$ the usual scalar product over $\mathbb{R}^{n_h}$. With these notations, Problem (\ref{eq:mf1h}) reads as follows : find $\{\ph_h\}\in \mathbb{R}^{n_h}$ and $\{\lambda_h\}\in \mathbb{R}^{m_h}$ such that 
\begin{equation} \label{matrixmfh}
\left(
\begin{array}{cc}
A_{\eps,r,h} &  B_h^T \\
B_h & 0   
\end{array}
\right)_{\mathbb{R}^{n_h+m_h,n_h+m_h}}
\left(
\begin{array}{c}
\{\ph_h\}   \\
\{\lambda_h\}    
\end{array}
\right)_{\mathbb{R}^{n_h+m_h}}  =
\left(
\begin{array}{c}
L_h \\
0   
\end{array}
\right)_{\mathbb{R}^{n_h+m_h}}.
\end{equation}
The matrix $A_{\eps,r,h}$ as well as the mass matrix $J_h$ are symmetric and positive definite for any $h>0$ and any $r>0$. On the other hand, the matrix of order $m_h+n_h$ in (\ref{matrixmfh}) is symmetric but not positive definite. We use exact integration methods developed in \cite{dunavant} for the evaluation of the coefficients of the matrices. The system (\ref{matrixmfh}) is solved using the direct LU decomposition method. 

Let us also mention that for $r=0$, although the formulation (\ref{eq:mf1}) is well-posed, numerically, the corresponding matrix $A_{\eps,0,h}$ is not invertible in agreement with Remark \ref{condr}. In the sequel, we shall consider strictly positive values for $r$.  

Once an approximation $\ph_h$ is obtained, an approximation $v_{\eps,h}$ of the control $v_{\eps}$ is given by $v_{\eps,h}=\rho_0^{-2}\ph_{\eps,h}\,1_{\omega}$. The corresponding controlled state $y_{\eps,h}$ may be obtained  by solving (\ref{eq:heat}) with standard forward approximation (we refer to \cite{NC-EFC-AM}, Section 4 where this is detailed). Here, since the controlled state is directly given by the multiplier $\lambda$, we simply use $\lambda_h$ as an approximation of $y$ and we do not report here the computation of $y_h$.   

In the sequel, we only report numerical experiments in the one dimensional setting. We use uniform rectangular meshes. Each element is a rectangle of lengths $\Delta x$ and $\Delta t$; $\Delta  x>0$ and $\Delta t>0$ denote as usual the discretization parameters in space and time, respectively. We note 
\[
\vspace{-0.05cm}
	h:=\max\{\textrm{diam}(K), K\in \mathcal{T}_h \}
\]
where $\textrm{diam}(K)$ denotes the diameter of $K$.
	
\subsection{Normalization and discretization of the mixed formulation \eqref{eq:mf3}} \label{discretization_mf3}

The same approximation may be used for the mixed formulation (\ref{eq:mf3}). In particular, we easily check that the finite dimensional spaces 
$M_{\eps,h}$ and $\Phi_{\eps,h}$ (which actually do not depend on $\eps$) are conformal approximation of $L^2(Q_T)$ and $\widetilde{\Phi}_{\rho_0,\rho}$ respectively. 
However, in the limit case $\eps=0$, a normalization of the variable $\ph$, which is singular and takes arbitrarily large amplitude in the neighborhood of 
$\Om\times \{T\}$ is very convenient and suitable in practice. Following \cite{EFC-AM-sema}, we introduce the variable $\psi:=\rho_0^{-1}\ph\in \rho_0^{-1}\widetilde\Phi_{\rho_0,\rho}$
and replace the mixed formulation (\ref{eq:mf3}) by the equivalent one:  find $(\psi,\lambda)\in \rho_0^{-1}\widetilde \Phi_{\rho_0,\rho}\times L^2(Q_T)$ solution of 
\begin{equation} 
\label{eq:mf3bis}
	\left\{
		\begin{array}{rcll}
			\noalign{\smallskip} 
			\hat a(\psi, \psibar) + \hat b(\psibar,\lambda) 
			& = &\hat l(\psibar), & \quad \forall \psibar \in \rho_0^{-1}\widetilde\Phi_{\rho_0,\rho} \\
			\noalign{\smallskip}\hat b(\psi, \lambar) & = & 0, & \quad \forall \lambar \in L^2(Q_T),
		\end{array}
	\right.
\end{equation}
	where
\begin{align}
	\nonumber
	&\hat a :\rho_0^{-1}\widetilde\Phi_{\rho_0,\rho} \times\rho_0^{-1}\widetilde\Phi_{\rho_0,\rho} \to \mathbb{R},  \quad \hat a(\psi, \psibar) 
			   = \jjntqT \psi \,\psibar\, dx\,dt  \\
	\nonumber
	&\hat b:\rho_0^{-1}\widetilde\Phi_{\rho_0,\rho} \times L^2(Q_T)\to \mathbb{R}, \quad \hat b(\psi, \lambda) = -\jjntQT\rho^{-1}L^\star(\rho_0\,\psi) \,\lambda\,dx\,dt\\
	\nonumber
	&\hat l:  \rho_0^{-1}\widetilde\Phi_{\rho_0,\rho} \to \mathbb{R},  \quad\hat  l(\ph) =- (y_0, \rho_0(\cdot,0) \psi(\cdot, 0))_{L^2(\Omega)}.
\end{align}
Well-posedness of this formulation is the consequence of Theorem \ref{th:mf3}. Moreover, the optimal controlled state is still given by $\rho^{-1}\lambda$ while the optimal control is expressed in term of the variable $\psi$ as $v=\rho_0^{-1}\psi \, 1_{\om}$.

The corresponding discretization approximation (augmented with the term \linebreak $r \Vert \rho^{-1}L^{\star}(\rho_0 \psi)\Vert_{L^2(Q_T)}$) reads as follows: find $(\psi_h,\lambda_h)\in \Phi_h\times M_h$ solution of 

\begin{equation} \label{eq:mf3bish}
\left\{
\begin{array}{rcll}
\noalign{\smallskip} \hat a_r(\psi_h, \overline{\psi}_h) + \hat b(\overline{\psi}_h, \lambda_h) & = & \hat l(\overline{\ph}_h), & \qquad \forall \overline{\psi}_h \in \Phi_h \\
\noalign{\smallskip} \hat b(\psi_h, \overline{\lambda}_h) & = & 0, & \qquad \forall \overline{\lambda}_h \in M_h.
\end{array}
\right.
\end{equation}
with 
$$
\begin{aligned}
a_r(\psi_h,\psibar_h):=& a(\psi_h,\psibar_h)+r (\rho^{-1}L^{\star}(\rho_0 \psi_h),\rho^{-1}L^{\star}(\rho_0 \psibar_h))_{L^2(Q_T)}\\
=&(\psi_h,\psibar_h)_{L^2(q_T)}+ r (\rho^{-1}L^{\star}(\rho_0 \psi_h),\rho^{-1}L^{\star}(\rho_0 \psibar_h))_{L^2(Q_T)}.
\end{aligned}
$$
for any $r>0$. 

\begin{remark}
When the weights $\rho_0$ and $\rho$ are chosen in such a way that they are compensated each other in the term $\rho^{-1}L^{\star}(\rho_0 \psi)$, the change of variable has the effect to reduced the amplitude (with respect to the time variable) of the coefficients in the integrals of $\hat a_r$ and $\hat b$, and therefore, at the discrete level, to improve significantly the condition number of square matrix $\hat A_{r,h}$ so that $\hat a_r(\psi_h,\psibar_h)=<\hat A_{r,h} \{\psi_h\},\{\psibar_h\}>_{\mathbb{R}^{n_h},\mathbb{R}^{n_h}}$. In this respect, the change of variable, can be seen as a preconditioner for the mixed formulation (\ref{eq:mf3}).   
\end{remark}
Similarly to (\ref{matrixmfh}), we note the matrix form of (\ref{eq:mf3bish}) as follows : 
\begin{equation} \label{matrixmfh3bis}
\left(
\begin{array}{cc}
\hat A_{r,h} &  \hat B_h^T \\
\hat B_h & 0   
\end{array}
\right)_{\mathbb{R}^{n_h+m_h,n_h+m_h}}
\left(
\begin{array}{c}
\{\psi_h\}   \\
\{\lambda_h\}    
\end{array}
\right)_{\mathbb{R}^{n_h+m_h}}  =
\left(
\begin{array}{c}
\hat L_h \\
0   
\end{array}
\right)_{\mathbb{R}^{n_h+m_h}},
\end{equation}
where $\hat B_h$ is the matrix so that~$\hat b(\psi_h,\lambda_h) = <\hat B_{h} \{\psi_h\}, \{\lambda_h\}>_{\mathbb{R}^{m_h},\mathbb{R}^{m_h}}$
and $\hat L_h$ is the matrix so that~$\hat l(\psi_h)=<\hat L_h,\{\psi_h\}> $.


\subsection{The discrete inf-sup test} \label{infsuptest}

	Before giving and discussing some numerical experiments, we first test numerically the discrete inf-sup condition \eqref{infsupdiscret}. 
	Taking $\eta=r>0$ so that $a_{\eps,r}(\ph,\phbar)= (\ph,\phbar)_{\Phi_\eps}$ exactly for all $\ph,\phbar\in \Phi_{\eps}$, it is readily 
	seen (see for instance \cite{chapelle}) that the discrete inf-sup constant satisfies  
\begin{equation}\label{eigenvalue}
	\delta_{\eps,r,h} = \inf\biggl\{\sqrt{\delta}:   B_h A_{\eps,r,h}^{-1} B_h^T  \{\lambda_h\} = \delta \,J_h \{\lambda_h\}, 
	\quad \forall\, \{\lambda_h\}\in \mathbb{R}^{m_h}\setminus\{0\}\biggr\}. 
\end{equation}
	The matrix $B_h A_{\eps,r,h}^{-1} B_h^T$ enjoys the same properties than the matrix $A_{\eps,r,h}$: it is symmetric and positive definite 
	so that the scalar $\delta_{\eps,h}$ defined in term of the (generalized) eigenvalue problem (\ref{eigenvalue}) is strictly positive. 
	This eigenvalue problem is solved using the power iteration algorithm (assuming that the lowest eigenvalue is simple):  for any 
	$\{v^0_h\}\in \mathbb{R}^{m_h}$ such that $\Vert \{v_h^0\}\Vert_2=1$, compute for any $n\geq 0$, $\{\ph_h^n\}\in \mathbb{R}^{n_h},
	\{\lambda_h^n\}\in \mathbb{R}^{m_h}$ and $\{v_h^{n+1}\}\in \mathbb{R}^{m_h}$ iteratively as follows :  
\[
	\left\{
		\begin{aligned}
			& A_{\eps,r,h}\{\ph_h^n\} + B_h^T \{\lambda_h^n\}=0 \\
			& B_h \{\ph_h^n\} = - J_h \{v_h^n\}
		\end{aligned}
	\right.\quad, 
\quad 
			\{v_h^{n+1}\} =\frac{ \{\lambda_h^n \} }{  \Vert \{ \lambda_h^n\}\Vert_2}.    
\]
	The scalar $\delta_{\eps,r,h}$ defined by (\ref{eigenvalue}) is then given by : $\delta_{\eps,r,h} = \lim_{n\to\infty} (\Vert \{\lambda_h^n\}\Vert_2)^{-1/2}$.

	We now give some numerical values of $\delta_{\eps,r,h}$ with respect to $h$ for the $C^1$-finite element introduced 
	in Section \ref{discretization_mf1}. 
	
	We consider the one dimensional case for which $\Omega=(0,1)$ and take for simplicity $c:=1/10$ and $d:=0$. Values of the diffusion $c$ and of the potential $d$ do not affect qualitatively the results.  
	
In the spirit of the previous work \cite{EFC-AM-sema}, we consider the following choice for the weight $\rho_0\in \mathcal{R}$: 
\begin{equation}
\rho_0(x,t):= (T-t)^{3/2} \, \exp\biggl(\frac{K_1}{(T-t)}\biggr),  \quad  (x,t)\in Q_T, \qquad K_1:=\frac{3}{4}  \label{weight_rho0}
\end{equation}	
so that $\rho_0$ blows exponentially as $t\to T^{-}$. This allows a smooth behavior of the corresponding control $v:=\rho_0^{-2} \ph\, 1_{\omega}$. Let us insist however that the mixed formulation is well-posed for any weight $\rho_0\in \mathcal{R}$, in particular $\rho_0:=1$ (leading to the control of minimal $L^2$-norm and for which we refer to \cite{MunchZuazuaRemedies}). $\rho_0$ is independent of the variable $x$ for simplicity.

We consider the following data  $\omega=(0.2,0.5)$, $T=1/2$, and $\Omega=(0,1)$. Tables \ref{tab:deltahmf1_r001_0205}, \ref{tab:deltahmf1_r1_0205} and \ref{tab:deltahmf1_r100_0205} provides the values of $\delta_{\eps,r,h}$ with respect to $h$ and $\eps$ for $r=10^{-2}, 1$ and $r=10^2$, respectively. In view of the definition, we check that $\delta_{\eps,r,h}$ increases as $r\to 0$ and $\eps\to 0$. We also observe, that for $r$ large enough (see Tables \ref{tab:deltahmf1_r1_0205} and \ref{tab:deltahmf1_r100_0205}), the value of the inf-sup constant is almost constant with respect to $\eps$ and behaves like 
\begin{equation}
\delta_{\eps,r,h} \approx  C_{\eps,r,h} \times r^{-1/2}      \label{behavior_deltah}   
\end{equation}
for some constant $C_{\eps,r,h}\in (0,1)$. 
More importantly, we observe that for any $r$ and $\eps$, the value of $\delta_{\eps,r,h}$ is bounded by below uniformly with respect to the discretization parameter $h$. The same behavior is observed for other discretizations such that $\Delta t \neq \Delta x$, other supports $\omega$ and other choices for the weight $\rho_0$ (in particular $\rho_0:=1$). 

Consequently, we may conclude that the finite approximation we have used do "pass" the discrete inf-sup test. It is interesting to note that this is in contrast with the situation for the wave equation for which the parameter $r$ have to be adjusted carefully with respect to $h$; we refer to \cite{NC-AM-mixedwave}.  
Moreover, as it is usual in mixed finite element theory, such a property together with the uniform coercivity of form $a_{\eps,r}$ then implies the convergence of the approximation sequence $(\ph_h,\lambda_h)$ solution of (\ref{eq:mf1h}). 

\begin{table}[http]
	\centering
		\begin{tabular}{|c|cccc|}
			\hline
			$h$  			       & $7.07\times 10^{-2}$ & $3.53\times 10^{-2}$ & $1.76\times 10^{-2}$ & $8.83\times 10^{-3}$
			\tabularnewline
			\hline
			$\eps=10^{-2}$ 	        	& $8.358$ 			& $8.373$ 			& $8.381$ 			& $8.386$ 	  
			\tabularnewline
			$\eps=10^{-4}$	 	& $9.183$		 	& $9.213$ 			& $9.229$ 		        & $9.237$  		         
			\tabularnewline
			$\eps=10^{-8}$ 	        & $9.263$			& $9.318$	 		& $9.354$ 			& $9.383$ 
			\tabularnewline  
			\hline
		\end{tabular}
	\caption[$\delta_{\eps,r,h}$  w.r.t. $\eps$ and $h$ ; $r=10^{-2}$]{$\delta_{\eps,r,h}$  w.r.t. $\eps$ and $h$ ; $r=10^{-2}$ ; $\Omega=(0,1)$, $\omega=(0.2,0.5)$, $T=1/2$.}
	\label{tab:deltahmf1_r001_0205}
\end{table}

\begin{table}[http]
	\centering
		\begin{tabular}{|c|cccc|}
			\hline
			$h$  			       & $7.07\times 10^{-2}$ & $3.53\times 10^{-2}$ & $1.76\times 10^{-2}$ & $8.83\times 10^{-3}$
			\tabularnewline
			\hline
			$\eps=10^{-2}$ 	        	& $9.933\times 10^{-1}$ 			& $9.938\times 10^{-1}$ 			& $9.940\times 10^{-1}$ 	 &      $9.941\times 10^{-1}$ 	  
			\tabularnewline
			$\eps=10^{-4}$	 	& $9.933\times 10^{-1}$		 	& $9.938\times 10^{-1}$ 			& $9.941\times 10^{-1}$ 		        & $9.942\times 10^{-1}$  		         
			\tabularnewline
			$\eps=10^{-8}$ 	        & $9.933\times 10^{-1}$			& $9.938\times 10^{-1}$	 		& $9.941\times 10^{-1}$ 			& $9.942\times 10^{-1}$ 
			\tabularnewline  
			\hline
		\end{tabular}
	\caption[$\delta_{\eps,r,h}$  w.r.t. $\eps$ and $h$ ; $r=1$]{$\delta_{\eps,r,h}$  w.r.t. $\eps$ and $h$ ; $r=1$ ; $\Omega=(0,1)$, $\omega=(0.2,0.5)$, $T=1/2$.}
	\label{tab:deltahmf1_r1_0205}
\end{table}

\begin{table}[http]
	\centering
		\begin{tabular}{|c|cccc|}
			\hline
			$h$  			       & $7.07\times 10^{-2}$ & $3.53\times 10^{-2}$ & $1.76\times 10^{-2}$ & $8.83\times 10^{-3}$
			\tabularnewline
			\hline
			$\eps=10^{-2}$ 	        	& $9.933\times 10^{-2}$ 			& $9.939\times 10^{-2}$ 			& $9.940\times 10^{-2}$ 	 &      $9.941\times 10^{-2}$ 	  
			\tabularnewline
			$\eps=10^{-4}$	 	& $9.933\times 10^{-2}$		 	& $9.939\times 10^{-2}$ 			& $9.941\times 10^{-2}$ 		        & $9.942\times 10^{-2}$  		         
			\tabularnewline
			$\eps=10^{-8}$ 	        & $9.933\times 10^{-2}$			& $9.939\times 10^{-2}$	 		& $9.941\times 10^{-2}$ 			& $9.942\times 10^{-2}$ 
			\tabularnewline  
			\hline
		\end{tabular}
	\caption[$\delta_{\eps,r,h}$  w.r.t. $\eps$ and $h$ ; $r=10^2$]{$\delta_{\eps,r,h}$  w.r.t. $\eps$ and $h$ ; $r=10^2$ ; $\Omega=(0,1)$, $\omega=(0.2,0.5)$, $T=1/2$.}
	\label{tab:deltahmf1_r100_0205}
\end{table}

Similarly, Table \ref{tab:deltah_mf3} displays the discrete inf-sup constant corresponding to the limit case of the mixed formulation (\ref{eq:mf3bis}): 
\begin{equation}
\nonumber
\begin{aligned}
& \inf_{\lambda_h \in  \widehat M_h} \sup_{\psi_h\in \widehat \Phi_h} \frac{\hat b(\psi_h, \lambda_h)}{ \|\lambda_h\|_{L^2(Q_T)}\|\psi_h\|_{\rho_0^{-1}\widetilde\Phi_{\rho_0,\rho} }
			} \\
			 & =\inf_{\lambda_h \in  \widehat M_h} \sup_{\psi_h\in \widehat \Phi_h}   \frac{\jjntqT \lambda_h \,\rho^{-1}L^{\star} (\rho_0 \psi_h) \,dx\,dt}{ \Vert \lambda_h\Vert_{L^2(Q_T)} (\Vert \psi_h\Vert^2_{L^2(q_T)}+ r \Vert \rho^{-1} L^{\star}(\rho_0\,\psi_h) \Vert^2_{L^2(Q_T)}  )^{1/2} }.
\end{aligned}
\end{equation}
We take here a weight $\rho$ independent of the variable $x$ given by 
\begin{equation}
\rho(x,t):= \exp\biggl(\frac{K_1}{(T-t)}\biggr), \quad (x,t)\in Q_T,  \quad K_1:=\frac{3}{4}. \label{weight_rho}
\end{equation}
Again, for the limit case, the value given in the Table suggest a similar behavior observed for $\eps>0$: the constant is uniformly bounded by below with respect to the parameter $h$ and behaves like $r^{-1/2}$ for $r$ large enough (up to $1$).  Remark that, due to the introduction of the weight $\rho\neq 1$, the inf-sup constants given by Table \ref{tab:deltah_mf3} are not the limit (as $\eps\to 0$) of the previous Tables.

\begin{table}[http]
	\centering
		\begin{tabular}{|c|ccccc|}
			\hline
			$h$  			       & $7.07\times 10^{-2}$ & $3.53\times 10^{-2}$ & $1.76\times 10^{-2}$ & $8.83\times 10^{-3}$ & $4.41\times 10^{-3}$
			\tabularnewline
			\hline
			$r=10^2$ 	        		& $6.9\times 10^{-2}$ 	& $6.91\times 10^{-2}$ 	& $7.06\times 10^{-2}$ 	& $8.08\times 10^{-2}$  & $9.52\times 10^{-2}$	  
			\tabularnewline
			$r=1$	 		& $6.89\times 10^{-1}$		 	& $6.91\times 10^{-1}$ 			& $6.96\times 10^{-1}$ 		        & $7.94\times 10^{-1}$  	 & 	$8.66\times 10^{-1}$         
			\tabularnewline
			$r=10^{-2}$ 	        & $1.944$			& $1.922$	 		& $1.845$ 			& $1.775$  & $1.731$
			 \tabularnewline	
			\hline
		\end{tabular}
	\caption[$\eps=0$;   $\delta_{r,h}$  w.r.t. $r$ and $h$]{$\eps=0$;   $\delta_{r,h}$  w.r.t. $r$ and $h$; $\Omega=(0,1)$, $\omega=(0.2,0.5)$, $T=1/2$.}
	\label{tab:deltah_mf3}
\end{table}


\subsection{Numerical experiments for the mixed formulation (\ref{eq:mf1})} 
\label{eq:mf1_numer} 

We report in this section experiments for the mixed formulation (\ref{eq:mf1}) and for simplicity we consider only the one dimensional case: $\Omega=(0,1)$ and $T=1/2$.

Let us first remark that in general explicit solutions $(\ph_{\eps},\lambda_\eps)$ of (\ref{eq:mf1}) are not available. However, when the coefficient $c$ and $d$ are constant, we may obtain a semi-explicit representation (using Fourier decomposition) of the minimizer $\ph_{\eps,T}$ of the conjugate functional $J^{\star}_{\eps}$ (see (\ref{eq:min_eps_adj})), and consequently of the corresponding adjoint variable $\ph_{\eps}$, the control of weighted minimal square integrable norm $v=\rho_0^{-2}\ph_\eps\,1_{\omega}$ and finally the controlled state $y_\eps$ solution of 
\eqref{eq:heat}. In practice, the obtention of the Fourier representation amounts to solve a symmetric linear system. We refer to the Appendix for the details. 

Such representation allows to evaluate precisely the distance of the exact solution  $(\ph_{\eps},\lambda_\eps)$ from the approximation $(\ph_h,\lambda_h)$ with respect to $h$ and validate the convergence of the approximation with respect to $h$. 

As for the initial data, we first simply take the first mode of the Laplacian, that is, 
$y_0(x)=\sin(\pi x)$, $x\in (0,1)$. In view of the regularization property of the heat equation, the regularity of the initial data has a very restricted effect on the optimal control and the robustness of the method. 

We take $c(x):=10^{-1}$, $d(x,t):=0$ and recall that in the uncontrolled case ($\omega=\emptyset$), these data leads to $\Vert y(\cdot,T)\Vert_{L^2(0,1)}=\sqrt{1/2}e^{-\pi^2 c T}\approx 4.31\times 10^{-1}$. Finally, we take $\omega=(0.2,0.5)$.

For $r=1$, Tables \ref{tab:ex_r1_e02_0205}, \ref{tab:ex_r1_e04_0205}  and \ref{tab:ex_r1_e08_0205} report some norms with respect to $h$ for $\eps=10^{-2}$, $10^{-4}$ and $\eps=10^{-8}$ respectively. The cases $r=10^2$ and $r=10^{-2}$  are reported in the Appendix, in Tables \ref{tab:ex_r100_e02_0205}, \ref{tab:ex_r100_e04_0205}, \ref{tab:ex_r100_e08_0205} and  \ref{tab:ex_r001_e02_0205}, \ref{tab:ex_r001_e04_0205}, \ref{tab:ex_r001_e08_0205} respectively. In the Tables, $\ph_{\eps}$ and $y_{\eps}$ denotes the unique solution of (\ref{eq:mf1}) given by (\ref{ph_fourier}) and 
(\ref{y_fourier}). In the Tables, $\kappa_{\eps,h}$ denotes the condition number associated to \eqref{matrixmfh}, independent of the initial data $y_0$ \footnote{The condition number 
	$\kappa(\mathcal{M}_h)$ of any square matrix $\mathcal{M}_h$ is defined by $\kappa(\mathcal{M}_h)=\vert\vert\vert 
	\mathcal{M}_h\vert\vert\vert_2  \vert\vert\vert \mathcal{M}_h^{-1}\vert\vert\vert_2$ where the norm $\vert\vert\vert \mathcal{M}_h
	\vert\vert\vert_2$ stands for the largest singular value of $\mathcal{M}_h$.}. 
	
We first check that the $L^2$-norm $\Vert \lambda_{\eps,h}(\cdot,T)\Vert_{L^2(0,1)}$ of the final state  is of the order of $\sqrt{\eps}$ and that the condition number $\kappa_{\eps,h}$ behave polynomially with respect to $h$; on the other hand, we observe a low variation of $\kappa_{\eps,h}$ with respect to $\eps$; $\kappa_{\eps,h}\approx O(h^{5.9})$ for $\eps=10^{-2}$ and $\kappa_{\eps,h}\approx O(h^{7.3})$ for $\eps=10^{-8}$.

Then, we check the convergence as $h$ tends to zero of the approximations $(v_{\eps,h},\lambda_{\eps,h})$ toward the optimal pair $(v_{\eps},y_{\eps})$ in $L^2(q_T)\times L^2(Q_T)$ for any values of  $\eps$ and $r$. 

More precisely, for large enough value of $\eps$ (here $\eps=10^{-2}$), we observe a quasi linear rate of convergence for both   $\frac{\|\rho_0 (v_{\eps} - v_{\eps,h})\|_{L^2(q_T)}}{\Vert \rho_0 v_{\eps} \Vert_{L^2(q_T)}}$ and $\frac{\|y_{\eps} - \lambda_{\eps,h}\|_{L^2(Q_T)}}{\Vert y_{\eps} \Vert_{L^2(Q_T)}}$ with respect to $h$, independent of the value of the parameter $r$. We refer to Figure \ref{contsolepsem2}. 
For small values of $\eps$, we observe a reduced convergence both for the control and the state (see Figure \ref{contsolepsem4} for $\eps=10^{-4}$ and Figure \ref{contsolepsem8} for $\eps=10^{-8}$).
We recall that as $\eps$ tends to zero, the space $\Phi_{\eps}$ degenerates into a much larger space and $\ph_{\eps}$ highly oscillates near $T$. Remark also that for $\eps=10^{-8}$, the constraint  
$L^{\star}\ph_{\eps}=0$ as an $L^2(Q_T)$ function is badly represented: this is due to the loss of regularity on the variable $\ph_{\eps}$ (in the neighborhood of $T$) as $\eps\to 0^+$. This does not prevent the convergence of the variable $\ph_{\eps,h}$ for the norm $\Phi_{\eps}$, in particular the control $v_{\eps,h}=\rho^{-2}\ph_{\eps,h}\,1_{\omega}$, and of the variable $\lambda_{\eps,h}$. We will come back to this situation in detail in the section devoted to the limit case $\eps=0$. Moreover, for small value of $\eps$, the parameter $r$ does have an influence; precisely, a low value of $r$ (here $r=10^{-2}$) leads to better relative errors : this is in agreement with the behavior of the inf-sup constant $\delta_{\eps,r,h}$ which increases with $r^{-1/2}$.


\begin{table}[http]
\centering
\scalebox{0.95}{
\begin{tabular}{|c|ccccc|}
\hline 
$h$ & $1.41\times 10^{-1}$ & $7.07\times 10^{-2}$ & $3.53\times 10^{-2}$ & $1.76\times 10^{-2}$ & $8.83\times 10^{-3}$ \tabularnewline
\hline

$m_h+n_h$ & $330$ & $1\ 155$ & $4\ 305$ & $16\ 605$ & $65\ 205$\tabularnewline

$\|L^{\star}\ph_{\eps,h}\|_{L^2(Q_T)}$ & $1.32\times 10^{-1}$ & $3.75\times 10^{-2}$ & $9.66\times 10^{-3}$ & $2.42\times 10^{-3}$ & $7.82\times 10^{-4}$\tabularnewline

$\frac{\|\rho_0 (v_{\eps} - v_{\eps,h})\|_{L^2(q_T)}}{\Vert \rho_0 v_{\eps} \Vert_{L^2(q_T)}}$ &  $1.10\times 10^{-1}$ &  $6.21\times 10^{-2}$ &  $3.29\times 10^{-2}$ &   $1.68\times 10^{-2}$ &  $8.57\times 10^{-3}$ \tabularnewline

$\frac{\|y_{\eps} - \lambda_{\eps,h}\|_{L^2(Q_T)}}{\Vert y_{\eps} \Vert_{L^2(Q_T)}}$ & $5.13\times 10^{-2}$ & $2.84\times 10^{-2}$ & $1.48\times 10^{-2}$ & $7.60\times 10^{-3}$ & $3.89\times 10^{-3}$  \tabularnewline

$\|\lambda_{\eps,h}(\cdot,T)\|_{L^2(0,1)}$ & $1.54\times 10^{-1}$ & $1.61\times 10^{-1}$ & $1.65\times 10^{-1}$ & $1.67\times 10^{-1}$ & $1.68\times 10^{-1}$  \tabularnewline

$\kappa_{\eps,h}$ & $1.52\times 10^{9}$ & $1.10\times 10^{11}$ & $6.80\times 10^{12}$ & $3.83\times 10^{14}$ & $1.96\times 10^{16}$  \tabularnewline

\hline
\end{tabular}}
\caption[Mixed formulation \eqref{eq:mf1} -  $r=1$ and $\eps=10^{-2}$]{Mixed formulation (\ref{eq:mf1}) -  $r=1$ and $\eps=10^{-2}$ with  $\om=(0.2,0.5)$.}
\label{tab:ex_r1_e02_0205}
\end{table}

\begin{table}[http]
\centering
\scalebox{0.95}{
\begin{tabular}{|c|ccccc|}
\hline 
$h$ & $1.41\times 10^{-1}$ & $7.07\times 10^{-2}$ & $3.53\times 10^{-2}$ & $1.76\times 10^{-2}$ & $8.83\times 10^{-3}$ \tabularnewline
\hline

$\|L^{\star}\ph_{\eps,h}\|_{L^2(Q_T)}$ & $1.383$ & $1.471$ & $9.05\times 10^{-1}$ & $2.56\times 10^{-1}$ & $6.54\times 10^{-2}$\tabularnewline

$\frac{\|\rho_0 (v_{\eps} - v_{\eps,h})\|_{L^2(q_T)}}{\Vert \rho_0 v_{\eps} \Vert_{L^2(q_T)}}$ &  $6.72\times 10^{-1}$ &  $3.22\times 10^{-1}$ &  $1.15\times 10^{-1}$ &   $5.49\times 10^{-2}$ &  $2.74\times 10^{-2}$ \tabularnewline

$\frac{\|y_{\eps} - \lambda_{\eps,h}\|_{L^2(Q_T)}}{\Vert y_{\eps} \Vert_{L^2(Q_T)}}$ & $2.73\times 10^{-1}$ & $1.86\times 10^{-1}$ & $5.89\times 10^{-2}$ & $2.51\times 10^{-2}$ & $1.26\times 10^{-2}$  \tabularnewline

$\|\lambda_{\eps,h}(\cdot,T)\|_{L^2(0,1)}$ & $8.50\times 10^{-2}$ & $5.74\times 10^{-2}$ & $3.39\times 10^{-2}$ & $3.11\times 10^{-2}$ & $3.13\times 10^{-2}$  \tabularnewline

$\kappa_{\eps,h}$ & $3.02\times 10^{9}$ & $3.91\times 10^{11}$ & $3.86\times 10^{13}$ & $3.25\times 10^{15}$ & $2.46\times 10^{17}$  \tabularnewline

\hline
\end{tabular}}
\caption[Mixed formulation \eqref{eq:mf1} -  $r=1$ and $\eps=10^{-4}$]{Mixed formulation (\ref{eq:mf1}) -  $r=1$ and $\eps=10^{-4}$ with  $\om=(0.2,0.5)$.}
\label{tab:ex_r1_e04_0205}
\end{table}

\begin{table}[http]
\centering
\scalebox{0.95}{
\begin{tabular}{|c|ccccc|}
\hline 
$h$ & $1.41\times 10^{-1}$ & $7.07\times 10^{-2}$ & $3.53\times 10^{-2}$ & $1.76\times 10^{-2}$ & $8.83\times 10^{-3}$ \tabularnewline
\hline

$\|L^{\star}\ph_{\eps,h}\|_{L^2(Q_T)}$ & $1.48$ & $2.03$ & $2.50$ & $2.52$ & $2.61$\tabularnewline


$\frac{\|\rho_0 (v_{\eps} - v_{\eps,h})\|_{L^2(q_T)}}{\Vert \rho_0 v_{\eps} \Vert_{L^2(q_T)}}$ &  $1.44$ &  $1.01$ &  $7.92\times 10^{-1}$ &   $6.65\times 10^{-1}$ &  $4.89\times 10^{-1}$ \tabularnewline


$\frac{\|y_{\eps} - \lambda_{\eps,h}\|_{L^2(Q_T)}}{\Vert y_{\eps} \Vert_{L^2(Q_T)}}$ & $8.42\times 10^{-1}$ & $8.27\times 10^{-1}$ & $5.73\times 10^{-1}$ & $4.35\times 10^{-1}$ & $2.89\times 10^{-1}$  \tabularnewline


$\|\lambda_{\eps,h}(\cdot,T)\|_{L^2(0,1)}$ & $8.63\times 10^{-2}$ & $6.65\times 10^{-2}$ & $2.39\times 10^{-2}$ & $1.23\times 10^{-2}$ & $4.43\times 10^{-3}$  \tabularnewline

$\kappa_{\eps,h}$ & $3.12\times 10^{9}$ & $4.30\times 10^{11}$ & $6.05\times 10^{13}$ & $1.13\times 10^{16}$ & $1.90\times 10^{18}$  \tabularnewline

\hline
\end{tabular}}
\caption[Mixed formulation \eqref{eq:mf1} -  $r=1$ and $\eps=10^{-8}$]{Mixed formulation (\ref{eq:mf1}) -  $r=1$ and $\eps=10^{-8}$ with  $\om=(0.2,0.5)$.}
\label{tab:ex_r1_e08_0205}
\end{table}

\begin{figure}[http]
\begin{center}
\begin{minipage}{0.45\textwidth}
 \includegraphics[width=\textwidth]{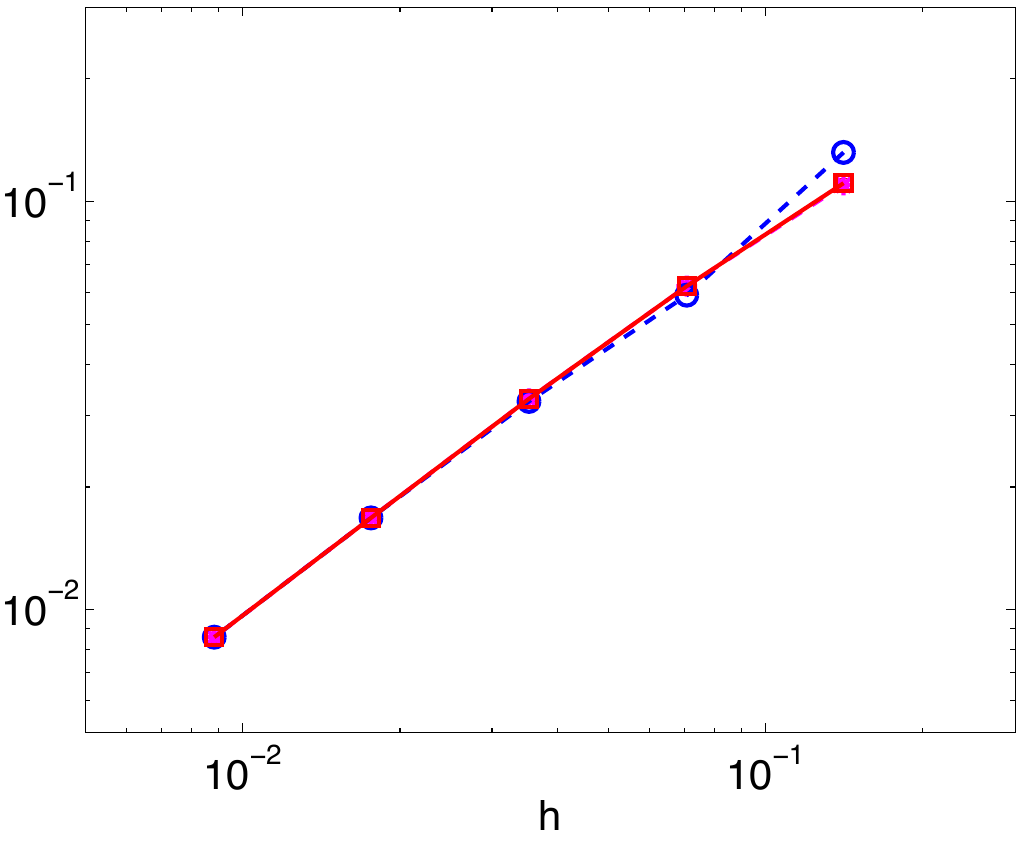}
\end{minipage}
\begin{minipage}{0.45\textwidth}
\hspace*{0.5cm}
\includegraphics[width=\textwidth]{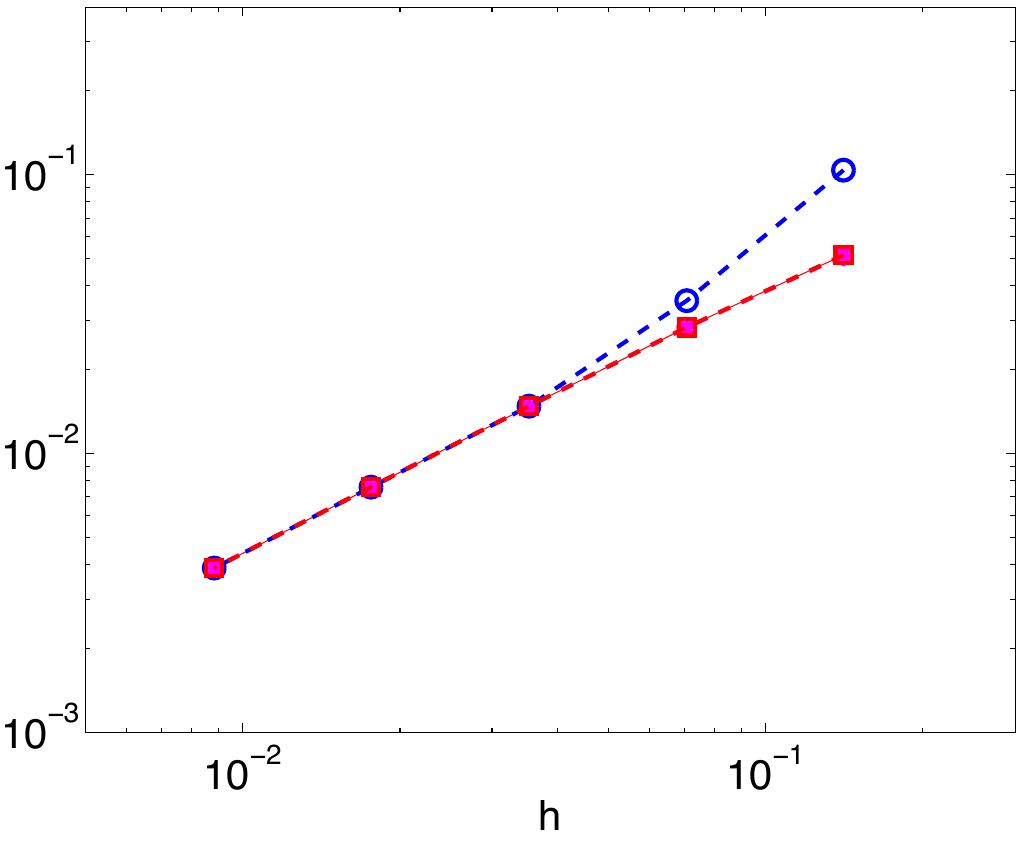}
\end{minipage}
\caption[Convergence for the control and the state, $\eps=10^{-2}$]{$\omega=(0.2,0.5)$; $y_0(x)=\sin(\pi x)$: $\eps=10^{-2}$. ;   $\frac{\|\rho_0 (v_{\eps} - v_{\eps,h})\|_{L^2(q_T)}}{\Vert \rho_0 v_{\eps} \Vert_{L^2(q_T)}}$ ({\bf Left}) and $\frac{\|y_{\eps} - \lambda_{\eps,h}\|_{L^2(Q_T)}}{\Vert y_{\eps} \Vert_{L^2(Q_T)}}$ ({\bf Right}) vs. $h$ for $r=10^{2}$ ({\color{blue} $\circ$}), $r=1.$ ({\color{magenta}$\star$}) and $r=10^{-2}$ ({\color{red}$\square$}).}\label{contsolepsem2}
\end{center}
\end{figure}

\begin{figure}[http]
\begin{center}
\begin{minipage}{0.45\textwidth}
 \includegraphics[width=\textwidth]{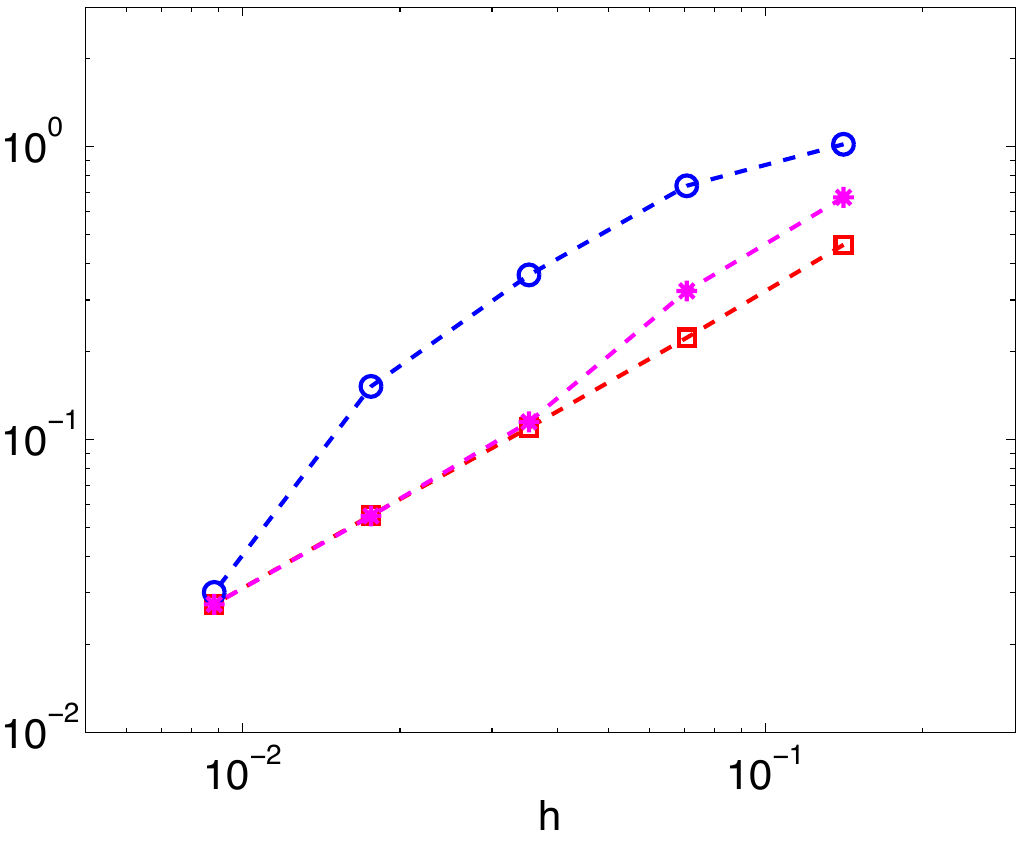}
\end{minipage}
\begin{minipage}{0.45\textwidth}
\hspace*{0.5cm}
\includegraphics[width=\textwidth]{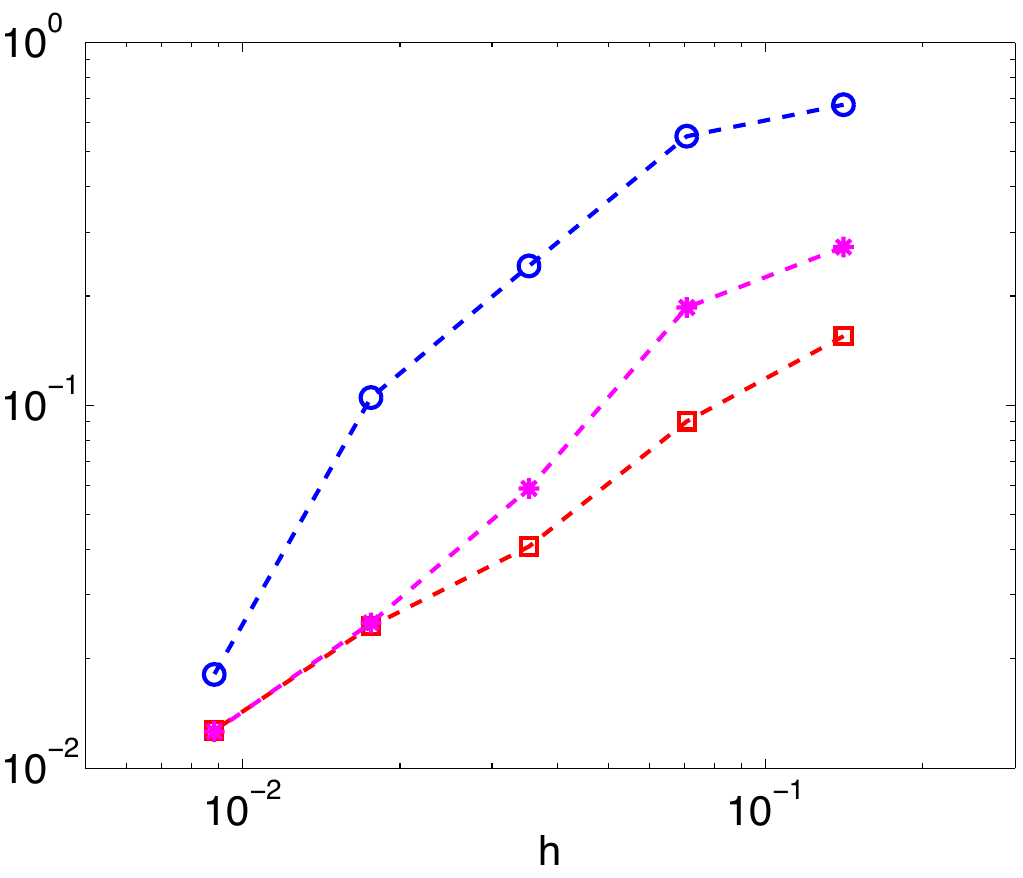}
\end{minipage}
\caption[Convergence for the control and the state, $\eps=10^{-4}$]{$\omega=(0.2,0.5)$; $y_0(x)=\sin(\pi x)$: $\eps=10^{-4}$. ;   $\frac{\|\rho_0 (v_{\eps} - v_{\eps,h})\|_{L^2(q_T)}}{\Vert \rho_0 v_{\eps} \Vert_{L^2(q_T)}}$ ({\bf Left}) and $\frac{\|y_{\eps} - \lambda_{\eps,h}\|_{L^2(Q_T)}}{\Vert y_{\eps} \Vert_{L^2(Q_T)}}$ ({\bf Right}) vs. $h$ for $r=10^{2}$ ({\color{blue} $\circ$}), $r=1.$ ({\color{magenta}$\star$}) and $r=10^{-2}$ ({\color{red}$\square$}).}\label{contsolepsem4}
\end{center}
\end{figure} 

\begin{figure}[http]
\begin{center}
\begin{minipage}{0.45\textwidth}
 \includegraphics[width=\textwidth]{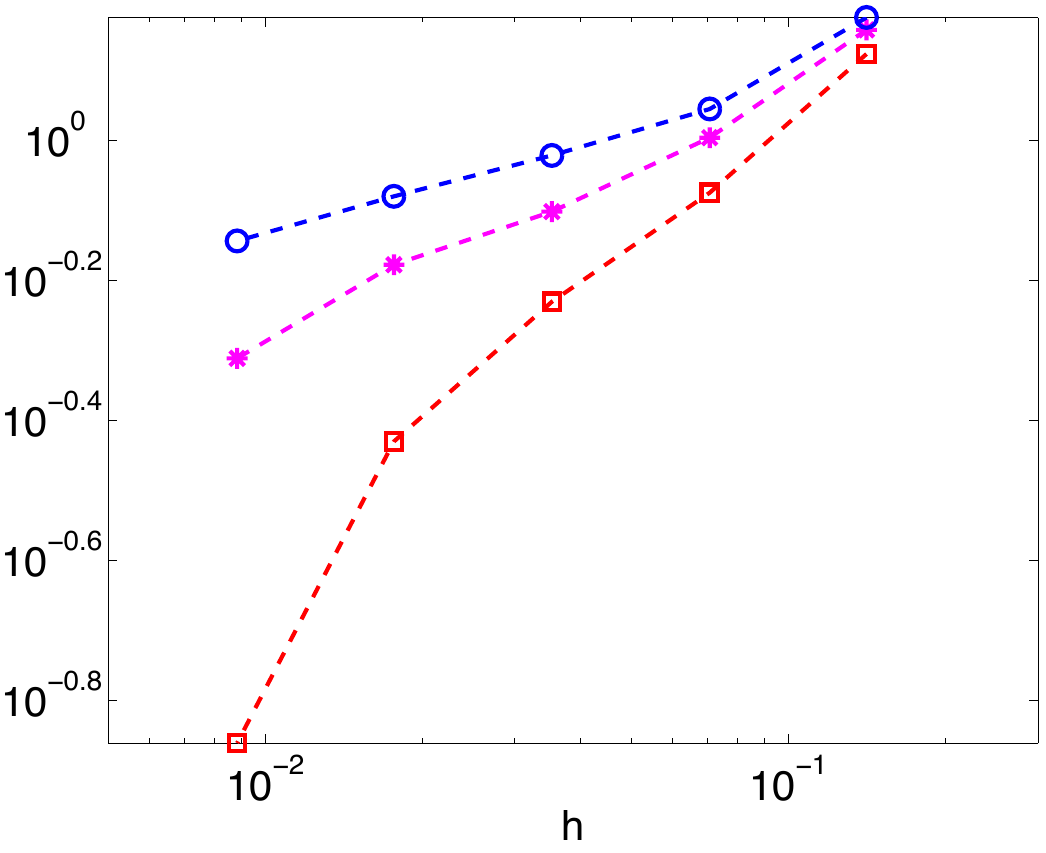}
\end{minipage}
\begin{minipage}{0.45\textwidth}
\hspace*{0.5cm}
\includegraphics[width=\textwidth]{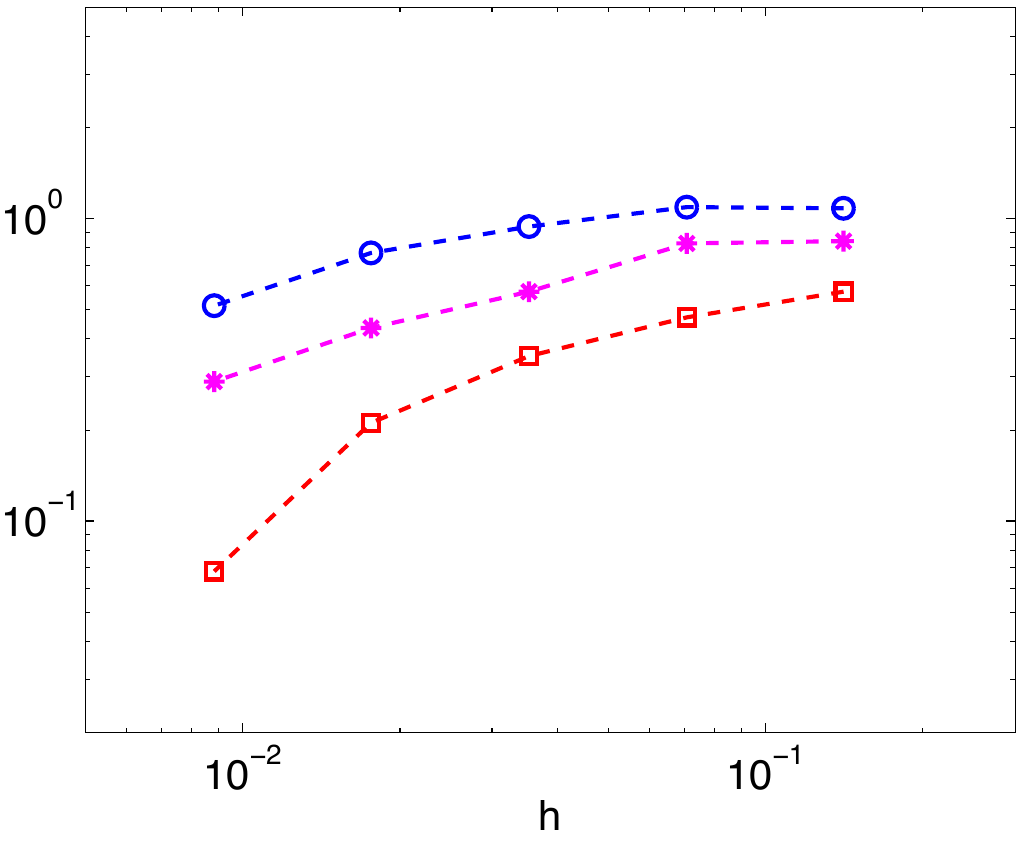}
\end{minipage}
\caption[Convergence for the control and the state, $\eps=10^{-8}$]{$\omega=(0.2,0.5)$; $y_0(x)=\sin(\pi x)$: $\eps=10^{-8}$. ;   $\frac{\|\rho_0 (v_{\eps} - v_{\eps,h})\|_{L^2(q_T)}}{\Vert \rho_0 v_{\eps} \Vert_{L^2(q_T)}}$ ({\bf Left}) and $\frac{\|y_{\eps} - \lambda_{\eps,h}\|_{L^2(Q_T)}}{\Vert y_{\eps} \Vert_{L^2(Q_T)}}$ ({\bf Right}) vs. $h$ for $r=10^{2}$ ({\color{blue} $\circ$}), $r=1.$ ({\color{magenta}$\star$}) and $r=10^{-2}$ ({\color{red}$\square$}).}\label{contsolepsem8}
\end{center}
\end{figure} 

Remarkably, we highlight that the variational approach developed here allows, for any $\eps$, a direct and robust approximation of one control for the heat equation. As discussed at length in \cite{EFC-AM-dual, MunchZuazuaRemedies}, the minimization of the conjugate functional $J^{\star}_{\eps}$ using conjugate gradient algorithm requires a great number of iterates for small $\eps$ (typically $\eps=10^{-8}$ and $\omega=(0.2,0.5)$) and diverge for small values of $h$.  

	Eventually, we present one experiment for the mixed formulation \eqref{eq:mf2} introduced in 
	Section~\ref{second_mixed_form}, which require only the use of continuous finite element approximation.
	Precisely, we use here $\mathbb{P}_1$ finite elements for both the states $\varphi_\eps$ and $\lambda_\eps$.
	With the same data as before, Table~\ref{tab:second_mixed} reports some norms with respect to $h$ for $r=1$ and
	$\eps=10^{-4}$. We still observe the convergence when $h$ tends to zero, but as expected (comparing with 
	Table~\ref{tab:ex_r1_e04_0205} corresponding to the mixed formulation \eqref{eq:mf1}) with lower convergence rates:
	for instance, we obtain that $\|y_\eps-\lambda_{\eps,h}\|_{L^2(Q_T)}=\mathcal{O}(h^{0.91})$ while, 
	from Table~\ref{tab:ex_r1_e04_0205}, we obtain  $\|y_\eps-\lambda_{\eps,h}\|_{L^2(Q_T)}=\mathcal{O}(h^{1.17})$.
	We also refer to Section $4.1$ of \cite{EFC-AM-sema} where a different mixed formulation is discussed in this context.

\begin{table}[http]
\centering
\scalebox{0.9}{
\begin{tabular}{|c|ccccc|}
\hline 
$h$ & $1.41\times 10^{-1}$ & $7.07\times 10^{-2}$ & $3.53\times 10^{-2}$ & $1.76\times 10^{-2}$ & $8.83\times 10^{-3}$ \tabularnewline
\hline

$m_h+n_h$ & $132$ & $462$ & $1\,732$ & $6\,642$ & $26\,082$\tabularnewline

$\|L^{\star}\ph_{\eps,h}\|_{L^2(0,T;H^{-1}(0,1))}$ & $7.90\times 10^{-1}$ & $4.42\times 10^{-1}$ & $2.47\times 10^{-1}$ & $1.37\times 10^{-1}$ & $7.72\times 10^{-2}$\tabularnewline

$\frac{\|\rho_0 (v_{\eps} - v_{\eps,h})\|_{L^2(q_T)}}{\Vert \rho_0 v_{\eps} \Vert_{L^2(q_T)}}$ &  $6.55\times 10^{-1}$ &  $3.50\times 10^{-1}$ &  $1.86\times 10^{-1}$ &   $9.87\times 10^{-2}$ &  $5.27\times 10^{-2}$ \tabularnewline

$\frac{\|y_{\eps} - \lambda_{\eps,h}\|_{L^2(Q_T)}}{\Vert y_{\eps} \Vert_{L^2(Q_T)}}$ & $3.86\times 10^{-1}$ & $2.06\times 10^{-1}$ & $1.09\times 10^{-1}$ & $5.82\times 10^{-2}$ & $3.11\times 10^{-2}$  \tabularnewline

\hline
\end{tabular}}
\caption[Mixed formulation \eqref{eq:mf2} -  $r=1$ and $\eps=10^{-4}$]{Mixed formulation (\ref{eq:mf2}) -  $r=1$ and $\eps=10^{-4}$ with  $\om=(0.2,0.5)$.}
\label{tab:second_mixed}
\end{table}


\subsection{Conjugate gradient for $J_{\eps,r}^{\star\star}$}
\label{dualdualsection}

	We illustrate here the Section \ref{duasectionmf1} and minimize the functional $J_{\eps,r}^{\star\star}:L^2(Q_T)\to \mathbb{R}$ defined in Proposition \ref{prop_equiv_dual} with respect to the variable $\lambda_{\eps}$. From the ellipticity of the operator $\mathcal{A}_{\eps,r}$, we use a conjugate gradient method  which in the present case reads as follows : 
	
\begin{enumerate}
	\item  {\it Initialization}\\
	Let $\lambda_{\eps}^0 \in L^2(Q_T)$ be a given function;\\
	Solve 
	\[
	\left\{
	\begin{array}{l}
		\overline\ph_{\eps}^0 \in \Phi_\eps\\
		a_{\eps,r}(\phbar_{\eps}^0, \phbar) + b_\eps(\phbar,\lambda_\eps^0) = l_\eps(\phbar), \quad \forall \phbar \in \Phi_\eps
	\end{array}
	\right.
	\]
	
	and set $g_\eps^0 = L^\star\phbar_\eps^0$ and set $w_\eps^0=g_\eps^0$.
	
	For $n \geq 0$, assuming that $\lambda_\eps^n, g_\eps^n$ and $w_\eps^n$ are known with $g_\eps^n\neq0$ and $w_\eps^n\neq0$, compute
	$\lambda_\eps^{n+1}, g_\eps^{n+1}$ and $w_\eps^{n+1}$ as follows
	
	\item {\it Steepest descent}\\
	Compute $\phbar^n \in \Phi_\eps$ solution to
	\[
		a_{\eps,r}(\phbar_\eps^n, \phbar) =-b_\eps(\phbar, w_\eps^n), \quad \forall \phbar \in \Phi_\eps
	\]
	and $\overline w_\eps^n = L^\star\phbar_\eps^n$ and then compute 
	\[
	\dis {\smallskip} \rho_n = \|g_\eps^n\|^2_{L^2(Q_T)} /(\overline w_\eps^n, w_\eps^n)_{L^2(Q_T)}.
	\]
	and set
	\[
		\lambda_\eps^{n+1} = \lambda_\eps^n - \rho_n w_\eps^n.
	\]
	
	\item {\it Testing the convergence and construction of the new descent direction}\\
	Update $g_\eps^n$ by
	\[
		g_\eps^{n+1} = g_\eps^n - \rho^n \overline w_\eps^n.
	\]
	
	If $\|g_\eps^{n+1}\|_{L^2(Q_T)} / \|g_\eps^0\|_{L^2(Q_T)} \leq \gamma$, take $\lambda_\eps = \lambda_\eps^{n+1}$. Else, compute 
	\[
		\gamma_n=\|g_\eps^{n+1}\|^2_{L^2(Q_T)} / \|g_\eps^n\|^2_{L^2(Q_T)}
	\]
	and update $w_\eps^n$ via 
	\[
	w_\eps^{n+1}=g_\eps^{n+1}+\gamma_n w_\eps^n. 
	\]
	Do $n=n+1$ and return to step (ii).
\end{enumerate}
	As mentioned in \cite{GlowinskiBook1} where this approach is discussed at length for Stokes and Navier-Stokes systems, 
	this algorithm can be viewed as a sophisticated version of Uzawa type algorithm to solve the mixed formulation (\ref{eq:mf1}). 
	Concerning the speed of convergence of this algorithm, it follows, for instance, from \cite{Daniel1971} that 
\[
	\Vert \lambda_\eps^n-\lambda_\eps\Vert_{L^2(Q_T)} \leq 2\sqrt{\nu(\mathcal{A}_{\eps,r})}   \biggl(\frac{\sqrt{\nu(\mathcal{A}_{\eps,r})}-1}{\sqrt{\nu(\mathcal{A}_{\eps,r})}+1}
	\biggr)^n \Vert \lambda_\eps^0-\lambda_\eps\Vert_{L^2(Q_T)}, \quad \forall n\geq 1
\]
	where $\lambda_\eps$ minimizes $J^{\star\star}_{\eps,r}$. $\nu(\mathcal{A}_{\eps,r})=\Vert \mathcal{A}_{\eps,r}\Vert \Vert \mathcal{A}_{\eps,r}^{-1}\Vert$ denotes the 
	condition number of the operator $\mathcal{A}_{\eps,r}$.

	Eventually, once the above algorithm has converged we can compute $\ph_\eps \in \Phi_\eps$ as solution of
\[
	a_{\eps,r}(\ph_{\eps}, \phbar) + b_\eps(\phbar, \lambda_{\eps}) = l_\eps(\phbar), \quad \forall \phbar \in \Phi_\eps.
\]

We use the same spaces $\Phi_{\eps,h}$ and $M_{\eps,h}$ as described in Section \ref{discretization_mf1}. In practice, each iteration amounts to solve a linear system
involving the matrix $A_{\eps,r,h}$ of size $n_h=4m_h$ (see (\ref{matrixmfh})) which is sparse, symmetric and positive definite. We use the Cholesky method.

From the previous estimate, the performances of the algorithm are related to the condition number of the operator $\mathcal{A}_{\eps,r}$ restricted to $M_{\eps,h}\subset L^2(Q_T)$, which coincides here  (see \cite{BrezziFortin}) with the condition number of the symmetric and positive definite matrix $B_h A^{-1}_{\eps,r,h}B_h^T$ introduced in (\ref{eigenvalue}). Using again the power iteration algorithm, we obtain that, for any $h$, the largest eigenvalue of $B_h A^{-1}_{\eps,r,h}B_h^T$  is very closed to $r^{-1}$ (and bounded by $r^{-1}$). This is in agreement with the estimate $\Vert \mathcal{A}_{\eps,r} \lambda\Vert_{L^2(Q_T)} \leq r^{-1} \Vert \lambda\Vert_{L^2(Q_T)}$ for all $\lambda\in L^2(Q_T)$. Consequently, the condition number is expressed in term of $r$ and of the discrete inf-sup constant $\delta_{\eps,h}$ as follows : 
$$
\nu(B_h A^{-1}_{\eps,r,h}B_h^T) \approx r^{-1}\delta_{\eps,r,h}^{-2}.
$$
Since, from our observation in Section \ref{infsuptest}, the discrete inf-sup constant $\delta_{\eps,r,h}$ is uniformly bounded by below with respect to $h$, we deduce that the condition number is uniformly bounded by above with respect to the discretization parameter. This implies that the convergence of the sequence $\{\lambda^n_{\eps,h}\}_{(n>0)}$, minimizing for $J^{\star\star}_{\eps,r}$ over $M_{\eps,h}$ is independent of $h$. This is exactly what we observe from our numerical experiments. Morever, from (\ref{behavior_deltah}), we get that the number 
$\nu(B_h A^{-1}_{\eps,r,h}B_h^T) \approx C_{\eps,r,h}^{-2}$ is very closed to one. We refer to Tables \ref{tab:condmf1_r001_0205} and \ref{tab:condmf1_r1_r100_0205} for the values. 

\begin{table}[http]
	\centering
		\begin{tabular}{|c|cccc|}
			\hline
			$h$  			       & $7.07\times 10^{-2}$ & $3.53\times 10^{-2}$ & $1.76\times 10^{-2}$ & $8.83\times 10^{-3}$
			\tabularnewline
			\hline
			$\eps=10^{-2}$ 	      & $1.431$ 			& $1.426$ 			& $1.423$ 			& $1.423$ 	  
			\tabularnewline
			$\eps=10^{-4}$	 	& $1.185$		 	& $1.177$ 			& $1.173$ 		        & $1.171$  		         
			\tabularnewline
			$\eps=10^{-8}$ 	        & $1.165$			& $1.151$	 		& $1.142$ 			& $1.135$ 
			\tabularnewline  
			\hline
		\end{tabular}
	\caption[$r^{-1}\delta_{\eps,r,h}^{-2}$  w.r.t. $\eps$ and $h$ ; $r=10^{-2}$ ]{$r^{-1}\delta_{\eps,r,h}^{-2}$  w.r.t. $\eps$ and $h$ ; $r=10^{-2}$ ; $\Omega=(0,1)$, $\omega=(0.2,0.5)$, $T=1/2$.}
	\label{tab:condmf1_r001_0205}
\end{table}

\begin{table}[http]
	\centering
		\begin{tabular}{|c|cccc|}
			\hline
			$h$  			       & $7.07\times 10^{-2}$ & $3.53\times 10^{-2}$ & $1.76\times 10^{-2}$ & $8.83\times 10^{-3}$
			\tabularnewline
			\hline
			$\eps=10^{-2}$ 	        	& $1.013$ 			& $1.012$ 			& $1.012$ 	 &      $1.011$ 	  
			\tabularnewline
			$\eps=10^{-4}$	 	& $1.013$		 	& $1.012$ 			& $1.011$ 		        & $1.011$  		         
			\tabularnewline
			$\eps=10^{-8}$ 	        & $1.013$			& $1.012$	 		& $1.011$ 			& $1.011$ 
			\tabularnewline  
			\hline
		\end{tabular}
	\caption[$r^{-1}\delta_{\eps,r,h}^{-2}$  w.r.t. $\eps$ and $h$ ; $r=1$ and $r=10^2$]{$r^{-1}\delta_{\eps,r,h}^{-2}$  w.r.t. $\eps$ and $h$ ; $r=1$ and $r=10^2$; $\Omega=(0,1)$, $\omega=(0.2,0.5)$, $T=1/2$.}
	\label{tab:condmf1_r1_r100_0205}
\end{table}

We consider the same data as in Section \ref{eq:mf1_numer}, that is, $\omega=(0.2,0.5)$, $y_0(x)=\sin(\pi x)$ and $T=1/2$. We take $\gamma=10^{-10}$ as a stopping threshold for 
the algorithm (that is the algorithm is stopped as soon as the norm of the residue $g^n$ at the iterate $n$ satisfies 
$\Vert g_{\eps}^n\Vert_{L^2(Q_T)}\leq 10^{-10} \Vert g_{\eps}^0\Vert_{L^2(Q_T)}$). The algorithm is initiated with $\lambda_{\eps,h}^0=0$ in $Q_T$. 
	
We check that the method provides, for the same value of $r$, $\eps$ and $h$, exactly the same approximation $\lambda_{\eps,h}$ than the previous direct method (see Tables \ref{tab:ex_r1_e02_0205}, etc).
Table \ref{tab:cg_r1_0205}, \ref{tab:cg_r100_0205} and \ref{tab:cg_r001_0205}, we simply give the number of iterates of the conjugate gradient algorithm for $r=10^2$, $r=1$ and $r=10^{-2}$ with respect to $h$ and $\eps$ respectively. For each case, the convergence is reached in very few iterates, independent of $h$. Once again, this is in contrast with the 
behavior of the conjugate gradient algorithm when this latter is used to minimize $J^{\star}_{\eps}$ with respect to $\ph_T$ defined by (\ref{eq:min_eps_adj}).
The number of convergence is also almost independent of $\eps$ and $r$. Since the gradient of $J^{\star\star}_{\eps,r}$ is given by 
$\nabla J^{\star\star}_{\eps,r}(\lambda')=\mathcal{A}_{\eps,r}(\lambda')-L^{\star}\ph_0$ for all $\lambda'\in L^2(Q_T)$, in particular $\nabla J^{\star\star}_{\eps,r}(\lambda_\eps)=L^{\star}\ph_\eps$,
a larger value of the augmentation parameter $r$ reduces (slightly here) the number of iterates.

According to this very low number of iterates, it seems more advantageous not only in term of memory resource but also in term of time execution to solve the extremal problem in the variable $\lambda_{\eps}$
than the (equivalent) mixed formulation (\ref{eq:mf1h}). The matrix $A_{\eps,r,h}$ of order $n_h$ is very sparse, symmetric, positive definite, diagonal bloc (for which the Cholesky method is very efficient) while the matrix defined by (\ref{matrixmfh}), of order $m_h+ n_h = 5/4 n_h$ requires the use of for instance the Gauss decomposition method. Note however that the condition number of the matrix $A_{\eps,r,h}$ is not independent of $h$ but behaves polynomially (see Table \ref{tab:cg_r1_0205} where the value is reported for $r=1$.). On the other hand, the condition number slightly decreases with $r$ (recall that the norm over $\Phi_{\eps}$ contains the term $r\Vert L^{\star}\ph\Vert_{L^2(Q_T)}$): consequently, for very stiff situation (typically $\omega$ very small), there may be  a balance between large values of $r$ leading to a better numerical robustness and low values of $r$ leading to smaller relative errors on $v_{\eps,h}$ and $\lambda_{\eps,h}$.

For very small values of both $h$ (leading to very fine meshes) of the order $h=10^{-3}$ and $\eps$, we observe some instabilities on the approximation $\lambda_{\eps,h}$ (very likely due to the condition number of the matrix $A_{\eps,r,h}$ which exceeds $10^{25}$ in this case). A preconditioning technique introduced in the next section is needed in these cases.  

\begin{table}[http]
\centering
\scalebox{0.94}{
\begin{tabular}{|c|ccccc|}
\hline 
$h$ & $1.41\times 10^{-1}$ & $7.07\times 10^{-2}$ & $3.53\times 10^{-2}$ & $1.76\times 10^{-2}$ & $8.83\times 10^{-3}$ \tabularnewline
\hline

$m_h=card(\{\lambda_{\eps,h}\})$ & $66$ & $231$ & $861$ & $3\ 321$ & $13\ 041$\tabularnewline

$\sharp$ iterates - $\eps=10^{-2}$ & $5$ & $5$ & $5$ & $5$ & $5$\tabularnewline

$\sharp$ iterates - $\eps=10^{-4}$ & $5$ & $5$ & $5$ & $4$ & $4$\tabularnewline

$\sharp$ iterates - $\eps=10^{-8}$ & $5$ & $5$ & $5$ & $5$ & $5$\tabularnewline

$\kappa(A_{\eps,r,h})$ - $\eps=10^{-2}$  & $1.51\times 10^{9}$ & $1.10\times 10^{11}$ & $6.81\times 10^{12}$ & $3.83\times 10^{14}$ & $1.91\times 10^{16}$  \tabularnewline

\hline
\end{tabular}}
\caption[Mixed formulation \eqref{eq:mf1} -  $r=1$; Conjugate gradient algorithm]{Mixed formulation (\ref{eq:mf1}) -  $r=1$ - $\om=(0.2,0.5)$ ; Conjugate gradient algorithm.}
\label{tab:cg_r1_0205}
\end{table}

\begin{table}[http]
\centering
\scalebox{0.94}{
\begin{tabular}{|c|ccccc|}
\hline 
$h$ & $1.41\times 10^{-1}$ & $7.07\times 10^{-2}$ & $3.53\times 10^{-2}$ & $1.76\times 10^{-2}$ & $8.83\times 10^{-3}$ \tabularnewline
\hline

$\sharp$ iterates - $\eps=10^{-2}$ & $5$ & $5$ & $4$ & $4$ & $4$\tabularnewline

$\sharp$ iterates - $\eps=10^{-4}$ & $5$ & $5$ & $5$ & $4$ & $4$\tabularnewline

$\sharp$ iterates - $\eps=10^{-8}$ & $5$ & $5$ & $5$ & $5$ & $4$\tabularnewline

\hline
\end{tabular}}
\caption[Mixed formulation \eqref{eq:mf1} -  $r=10^2$; Conjugate gradient algorithm]{Mixed formulation (\ref{eq:mf1}) -  $r=10^2$ - $\om=(0.2,0.5)$ ; Conjugate gradient algorithm.}
\label{tab:cg_r100_0205}
\end{table}

\begin{table}[http]
\centering
\scalebox{0.94}{
\begin{tabular}{|c|ccccc|}
\hline 
$h$ & $1.41\times 10^{-1}$ & $7.07\times 10^{-2}$ & $3.53\times 10^{-2}$ & $1.76\times 10^{-2}$ & $8.83\times 10^{-3}$ \tabularnewline
\hline

$\sharp$ iterates - $\eps=10^{-2}$ & $9$ & $9$ & $8$ & $8$ & $8$\tabularnewline

$\sharp$ iterates - $\eps=10^{-4}$ & $8$ & $8$ & $8$ & $8$ & $8$\tabularnewline

$\sharp$ iterates - $\eps=10^{-8}$ & $8$ & $8$ & $7$ & $7$ & $7$\tabularnewline

\hline
\end{tabular}}
\caption[Mixed formulation \eqref{eq:mf1} -  $r=10^{-2}$; Conjugate gradient algorithm]{Mixed formulation (\ref{eq:mf1}) -  $r=10^{-2}$ - $\om=(0.2,0.5)$ ; Conjugate gradient algorithm.}
\label{tab:cg_r001_0205}
\end{table}

We do not describe experiments for the mixed formulation introduced in \ref{second_mixed_form}, which require the use of continuous finite element approximation. We refer to \cite{EFC-AM-sema} in a closed context.


\subsection{Numerical experiments for the mixed formulation (\ref{eq:mf3}) - limit case $\eps=0$.} 

We now report in this section some experiments corresponding to the limit case, that is $\eps=0$, of the mixed formulation (\ref{eq:mf3}).
We consider again the first mode : $y_0(x)=\sin(\pi x)$, take $\omega=(0.2,0.5)$, $T=1/2$ and the exponential type weights $\rho_0$ and $\rho$ given by (\ref{weight_rho0}) and (\ref{weight_rho}) respectively.

This particular choice of the weights allows to rewrite the quantity $\rho^{-1}\, L^{\star}\ph$ in term of the new variable $\psi$ as follow
\begin{equation}
\label{change_of_variable}
\begin{aligned}
\rho^{-1} \, L^{\star} (\rho_0 \psi) & = \rho^{-1}\rho_0  L^{\star}\psi - \rho^{-1}\rho_{0t} \psi  \\
& = (T-t)^{3/2} L^{\star}\psi +   \biggl(-\frac{3}{2}(T-t)^{1/2}+K_1 (T-t)^{-1/2}\biggr) \psi
\end{aligned}
\end{equation}
and thus eliminate the exponential singularity near $T^{-1}$. Only a much weaker polynomial singularity, precisely $(T-t)^{-1/2}$ remains.  

Moreover, we define as ``exact" solution $(y,v)$ the solution obtained with a very fine mesh corresponding to $h\approx 1.1\times 10^{-3}$, a number of element equal to $819\ 200$ and a number of degrees of freedom equal to $m_h+n_h=3\ 284\ 484$. With these values, we get the following norms :
\begin{equation}
\Vert \rho^{-1}\lambda_{h=1.1\times 10^{-3}} \Vert_{L^2(Q_T)}\approx 3.592\times 10^{-1}, \quad \Vert \rho_0 v_{h=1.1\times 10^{-3}} \Vert_{L^2(q_T)}\approx 18.6634.\nonumber
\end{equation}    
We do not use the Fourier expansion approach described in the Appendix, since the optimality equation (\ref{eq_apaq}) is ill-posed for $\eps=0$ and leads to instability as the number of modes used in the sum increases. On the contrary, the minimization of $J^{\star\star}_r$-  equivalent to the resolution of the mixed formulation (\ref{eq:mf3}) exhibits a remarkable robustness as $h\to 0$. Eventually, we mention that the mesh used is so fine that the corresponding result is (almost) independent of the parameter $r$.

Tables \ref{tab:ex_r001_rho_rho0_0205}, \ref{tab:ex_r1_rho_rho0_0205} and \ref{tab:ex_r100_rho_rho0_0205} reports some norms with respect to $h$ for $r=10^{-2}$, $r=1$ and $r=10^2$, respectively. Let us first mention that we again obtain exactly the same approximations from the direct resolution of the system (\ref{matrixmfh3bis}) and from the minimization of $J^{\star\star}_r$. 

As in the case $\eps>0$, we observe the convergence of $\rho^{-1}\lambda_h$ and $\rho_0 v_h$ in $L^2(Q_T)$ and $L^2(q_T)$ respectively as $h\to 0^+$. For instance, for $r=1$, 
we obtain 
\begin{equation}
\frac{\|\rho_0 (v - v_h)\|_{L^2(q_T)}}{\Vert \rho_0 v \Vert_{L^2(q_T)}} \approx e^{1.04}h^{0.429}, \quad \frac{\|y_ - \rho^{-1}\lambda_h\|_{L^2(Q_T)}}{\Vert y \Vert_{L^2(Q_T)}}\approx e^{1.27}h^{0.704}. \nonumber
\end{equation}
Figure \ref{contsoleps0} depicts the evolution of these relatives errors with respect to $h$ for $r=10^{-2}, 1$ and $r=10^2$. Again, in view of the values of the inf-sup constant of Table \ref{tab:deltah_mf3}, we check that the lower value $r=10^{-2}$ provides a faster convergence of the approximation.  It is also interesting to remark that low errors for the state $\rho^{-1}\lambda_h$ and the control $v_h$ are obtained with a relatively large value of the norm $\Vert \rho^{-1}L^{\star}\ph_h\Vert_{L^2(Q_T)}$. This suggests that the constraint equality $L^{\star}\ph=0$ in $L^2(Q_T)$ may be replaced by a weaker one as discussed in Section \ref{second_mixed_form}. We do not present experiments for the weaker formulation (\ref{eq:mf2}) and refer to Section 4 of \cite{EFC-AM-sema} in a closed context. 
Tables  \ref{tab:ex_r001_rho_rho0_0205}, \ref{tab:ex_r1_rho_rho0_0205} and \ref{tab:ex_r100_rho_rho0_0205} also report some results from the minimization of the functional $J_r^{\star\star}$ using the conjugate gradient algorithm. For $r=1$ and $r=10^2$, the quantity $r^{-1}\delta_{r,h}^{-2}$ - bounded by above of the condition number of $\hat{B}_h \hat{A}_{r,h}^{-1}\hat{B}_h^T$ - slightly decreases with $h$; the convergence of the algorithm is reached in few iterations independent of $h$. The value $r=10^{-2}$ requires about 50 iterations for all the discretization considered. 

Remarkably, the change of variable performed in the limit case allows to reduce very significantly the condition number $\kappa(A_{r,h})$ of the matrix $A_{r,h}$ (almost independent of $r$): see Table \ref{tab:ex_r001_rho_rho0_0205}. This allows to consider very small values of the parameter $h$ without producing any instabilities. 

This high robustness of the approximation is definitively in contrast classical dual methods discussed in \cite{MunchZuazuaRemedies} and the references therein: We recall that for $\eps=0$, the minimization of $J^{\star}_{\eps=0}$ defined by (\ref{eq:min_eps_adj}) fails as soon as $h$ is small enough. 

Figure \ref{fig:stateh_0205} and Figure \ref{fig:conth_0205} depict over $Q_T$ the approximation $y_h:=\rho^{-1}\lambda_h$ and $v_h:=\rho_0^{-1}\psi_h\, 1_{q_T}$ for $h=8.83\times 10^{-3}$. In particular, the smallness of both the diffusion coefficient and the size of the support $\omega$ leads to a large amplitude of the control at the initial time. This is in contrast 
with the boundary control situation where one acts directly on the state (or its first derivative).

Eventually, in order to validate one more time our computations, we have approximated by a standard time-marching algorithm the solution of (\ref{eq:heat}) with $v=v_h$. Specifically, we have used a $C^1$-approximation with $\mathbb{P}_{3,x}(0,1)$ elements in space and the second-step implicit Gear scheme (of order two) for the time discretization. Tables  \ref{tab:ex_r001_rho_rho0_0205}, \ref{tab:ex_r1_rho_rho0_0205} and \ref{tab:ex_r100_rho_rho0_0205} report the $L^2$-norm of the state at the final time, i.e. $\Vert y_h(\cdot,T)\Vert_{L^2(0,1)}$. For each value of $r$, the $L^2$-norm decreases linearly to $0$ with $h$. For any $h$, the non-zero value of $\Vert y_h(\cdot,T)\Vert_{L^2(0,1)}$ is, first due to the fact that $v_h$ is not an exact null-control for any discrete system, and second to the consistency error of the approximation used.  

\begin{table}[http]
\centering
\scalebox{0.9}{
\begin{tabular}{|c|ccccc|}
\hline 
$h$  & $3.53\times 10^{-2}$ & $1.76\times 10^{-2}$ & $8.83\times 10^{-3}$ & $4.41\times 10^{-3}$ & $2.2\times 10^{-3}$ \tabularnewline
\hline

$\|\rho^{-1}L^{\star}(\rho_0 \psi_{h})\|_{L^2(Q_T)}$ &   $29.76$ &   $24.86$ &  $21.12$  & $17.92$ & $15.42$ \tabularnewline

$\frac{\|\rho_0 (v - v_h)\|_{L^2(q_T)}}{\Vert \rho_0 v \Vert_{L^2(q_T)}}$  &  $5.35\times 10^{-1}$ &   $3.34\times 10^{-1}$ &  $2.42\times 10^{-1}$  & $1.63\times 10^{-1}$ & $8.45\times 10^{-2}$\tabularnewline

$\|\rho_0 v_h\|_{L^2(q_T)}$   &  $15.20$ &   $16.642$ &  $17.52$ & $18.07$ & $18.43$\tabularnewline

$\|\rho^{-1} \lambda_h\|_{L^2(Q_T)}$    &  $3.15\times 10^{-1}$ &   $3.34\times 10^{-1}$ &  $3.46\times 10^{-1}$  & $3.52\times 10^{-1}$ & $3.56\times 10^{-1}$ \tabularnewline

$\frac{\|y_ - \rho^{-1}\lambda_h\|_{L^2(Q_T)}}{\Vert y \Vert_{L^2(Q_T)}}$   & $1.96\times 10^{-1}$ & $1.20\times 10^{-1}$ & $6.97\times 10^{-2}$  & $3.67\times 10^{-2}$ & $1.49\times 10^{-2}$\tabularnewline

$\sharp$ CG iterates  & $52$ & $55$ & $56$ & $56$  & $55$\tabularnewline

$r^{-1}\delta_{r,h}^{-2}$ & $27.04$ & $29.37$ & $31.73$ & $33.37$  & $-$ \tabularnewline

$\kappa(A_{r,h})$   & $9.5\times 10^{4}$ & $1.4\times 10^{7}$ & $3.03\times 10^{9}$ & $1.1\times 10^{12}$ & $-$  \tabularnewline

$n_h$=size($A_{r,h}$)  & $3\ 444$ & $13\ 284$ & $52\ 264$ & $206\ 724$ & $823\ 044$  \tabularnewline

$\Vert y_h(\cdot,T)\Vert_{L^2(0,1)}$   & $1.52\times 10^{-1}$ & $6.109\times 10^{-2}$ & $2.59\times 10^{-2}$ & $1.162\times 10^{-2}$ & $5.41\times 10^{-3}$ \tabularnewline

\hline
\end{tabular}}
\caption[Mixed formulation \eqref{eq:mf3} --  $r=10^{-2}$ and $\eps=0$]{Mixed formulation (\ref{eq:mf3}) -  $r=10^{-2}$ and $\eps=0$ with  $\om=(0.2,0.5)$.}
\label{tab:ex_r001_rho_rho0_0205}
\end{table}

\begin{table}[http]
\centering
\scalebox{0.9}{
\begin{tabular}{|c|ccccc|}
\hline 
$h$  & $3.53\times 10^{-2}$ & $1.76\times 10^{-2}$ & $8.83\times 10^{-3}$ & $4.41\times 10^{-3}$ & $2.2\times 10^{-3}$ \tabularnewline
\hline

$\|\rho^{-1}L^{\star}(\rho_0 \psi_{h})\|_{L^2(Q_T)}$   &  $3.659$ &   $3.276$ &  $2.808$  & $2.377$  & $2.002$\tabularnewline

$\frac{\|\rho_0 (v - v_h)\|_{L^2(q_T)}}{\Vert \rho_0 v \Vert_{L^2(q_T)}}$   &  $6.97\times 10^{-1}$ &   $4.82\times 10^{-1}$ &  $3.69\times 10^{-1}$  & $2.81\times 10^{-1}$ & $2.06\times 10^{-1}$ \tabularnewline

$\|\rho_0 v_h\|_{L^2(q_T)}$   &  $13.37$ &   $15.33$ &  $16.62$ & $17.45$ & $17.99$ \tabularnewline

$\|\rho^{-1} \lambda_h\|_{L^2(Q_T)}$   &  $3.35\times 10^{-1}$ &   $3.40\times 10^{-1}$ &  $3.41\times 10^{-1}$  & $3.42\times 10^{-1}$ & $3.52\times 10^{-1}$\tabularnewline

$\frac{\|y_ - \rho^{-1}\lambda_h\|_{L^2(Q_T)}}{\Vert y \Vert_{L^2(Q_T)}}$  & $3.28\times 10^{-1}$ & $2.13\times 10^{-1}$ & $1.33\times 10^{-1}$  & $8.09\times 10^{-2}$ & $4.63\times 10^{-2}$\tabularnewline

$\sharp$ CG iterates   & $12$ & $11$ & $10$ & $9$ & $9$\tabularnewline

$r^{-1}\delta_{r,h}^{-2}$  & $2.092$ & $2.062$ & $1.585$ & $1.333$  & $-$ \tabularnewline

$\Vert y_h(\cdot,T)\Vert_{L^2(0,1)}$    & $1.19\times 10^{-1}$ & $5.39\times 10^{-2}$ & $2.42\times 10^{-2}$ & $1.12\times 10^{-2}$ & $5.29\times 10^{-3}$ \tabularnewline

\hline
\end{tabular}}
\caption[Mixed formulation \eqref{eq:mf3} --  $r=1$ and $\eps=0$]{Mixed formulation (\ref{eq:mf3}) -  $r=1$ and $\eps=0$ with  $\om=(0.2,0.5)$.}
\label{tab:ex_r1_rho_rho0_0205}
\end{table}

\begin{table}[http]
\centering
\scalebox{0.9}{
\begin{tabular}{|c|ccccc|}
\hline 
$h$  & $3.53\times 10^{-2}$ & $1.76\times 10^{-2}$ & $8.83\times 10^{-3}$ & $4.41\times 10^{-3}$  & $2.2\times 10^{-3}$ \tabularnewline
\hline

$\|\rho^{-1}L^{\star}(\rho_0 \psi_{h})\|_{L^2(Q_T)}$ &   $0.428$ &   $0.426$ &  $0.380$  & $0.321$  & $0.215$\tabularnewline

$\frac{\|\rho_0 (v - v_h)\|_{L^2(q_T)}}{\Vert \rho_0 v \Vert_{L^2(q_T)}}$  &   $8.83\times 10^{-1}$ &   $6.80\times 10^{-1}$ &  $5.24\times 10^{-1}$  & $4.16\times 10^{-1}$  & $3.25\times 10^{-1}$\tabularnewline

$\|\rho_0 v_h\|_{L^2(q_T)}$   &  $9.880$ &   $12.706$ &  $14.82$ & $16.256$ & $17.338$\tabularnewline

$\|\rho^{-1} \lambda_h\|_{L^2(Q_T)}$   &  $0.2546$ &   $0.2926$ &  $0.3189$  & $0.3352$ & $0.3477$\tabularnewline

$\frac{\|y_ - \rho^{-1}\lambda_h\|_{L^2(Q_T)}}{\Vert y \Vert_{L^2(Q_T)}}$  &  $5.86\times 10^{-1}$ & $4.04\times 10^{-1}$ & $2.63\times 10^{-1}$  & $1.66\times 10^{-1}$ & $9.88\times 10^{-2}$ \tabularnewline

$\sharp$ CG iterates  & $10$ & $8$ & $7$ & $5$ &  $5$ \tabularnewline

$r^{-1}\delta_{r,h}^{-2}$  & $2.092$ & $2.007$ & $1.53$ & $1.103$  & $-$ \tabularnewline

$\Vert y_h(\cdot,T)\Vert_{L^2(0,1)}$    & $8.26\times 10^{-2}$ & $4.24\times 10^{-2}$ & $2.11\times 10^{-2}$ & $1.03\times 10^{-2}$ & $5.12\times 10^{-3}$ \tabularnewline

\hline
\end{tabular}}
\caption[Mixed formulation \eqref{eq:mf3} --  $r=10^2$ and $\eps=0$]{Mixed formulation (\ref{eq:mf3}) -  $r=10^{2}$ and $\eps=0$ with  $\om=(0.2,0.5)$.}
\label{tab:ex_r100_rho_rho0_0205}
\end{table}

\begin{figure}[http]
\begin{center}
\begin{minipage}{0.45\textwidth}
 \includegraphics[width=\textwidth]{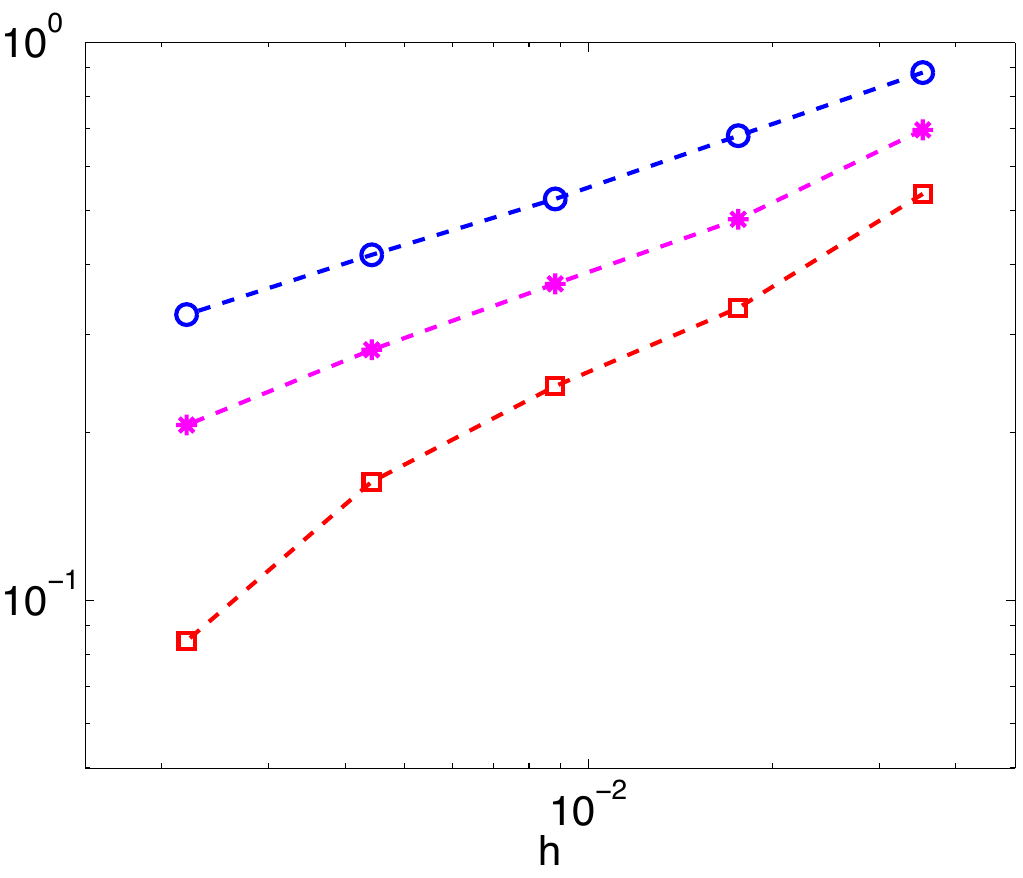}
\end{minipage}
\begin{minipage}{0.45\textwidth}
\hspace*{0.5cm}
\includegraphics[width=\textwidth]{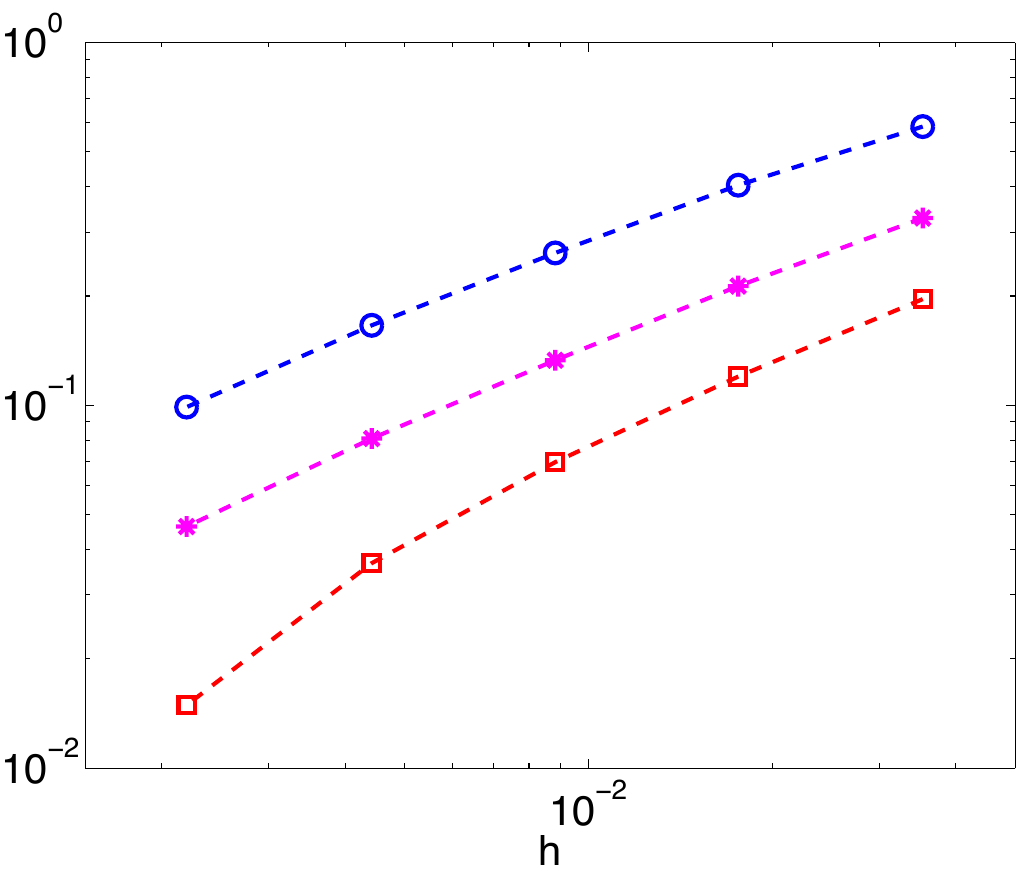}
\end{minipage}
\caption[Convergence for the control and the state, $\eps=0$]{$\omega=(0.2,0.5)$; $y_0(x)=\sin(\pi x)$: $\eps=0$. ;   $\frac{\|\rho_0 (v - v_h)\|_{L^2(q_T)}}{\Vert \rho_0 v \Vert_{L^2(q_T)}}$ ({\bf Left}) and $\frac{\|y - \rho^{-1}\lambda_h\|_{L^2(Q_T)}}{\Vert y \Vert_{L^2(Q_T)}}$ ({\bf Right}) vs. $h$ for $r=10^{2}$ ({\color{blue} $\circ$}), $r=1.$ ({\color{magenta}$\star$}) and $r=10^{-2}$ ({\color{red}$\square$}).}\label{contsoleps0}
\end{center}
\end{figure} 

\begin{figure}[http]
\begin{center}
\includegraphics[scale=0.6]{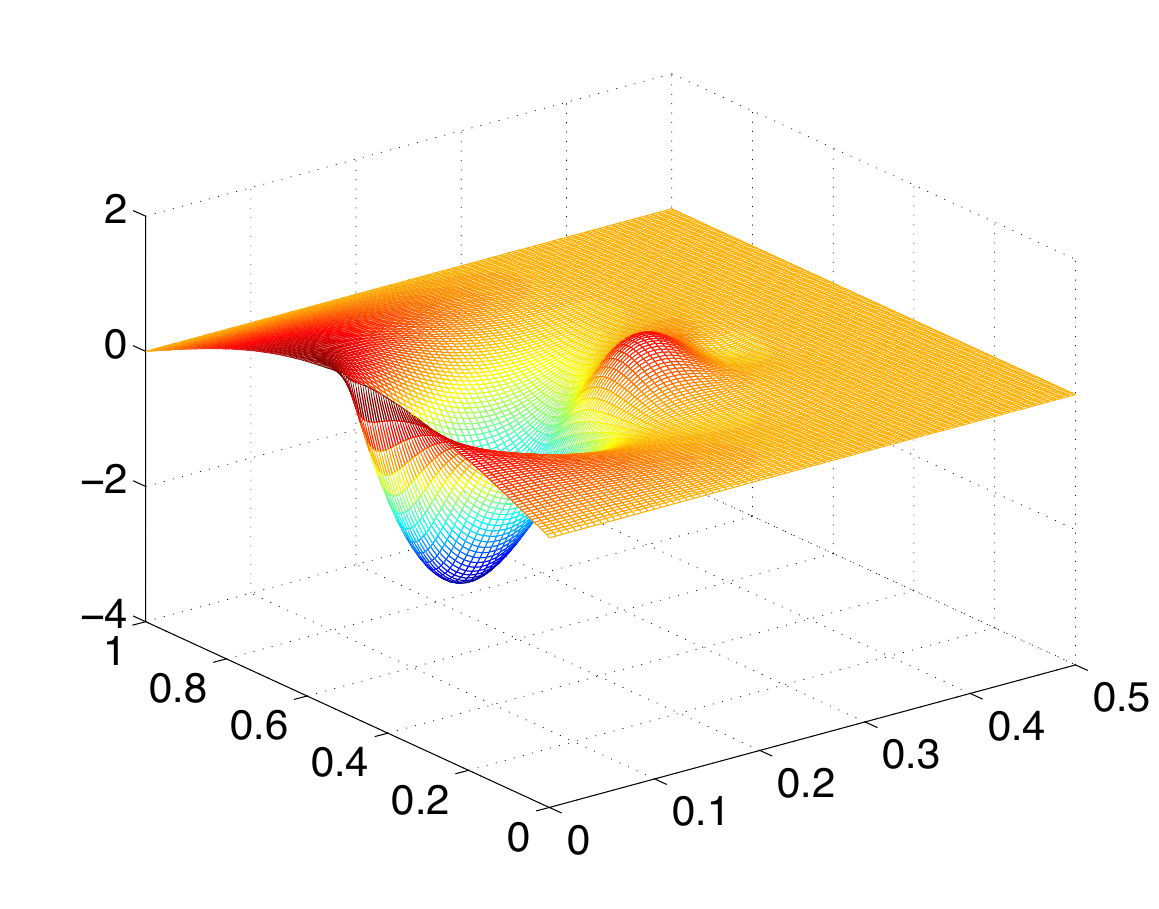}
\caption[Approximation of the controlled state]{$\omega=(0.2,0.5)$; Approximation $\rho^{-1} \lambda_h$ of the controlled state  $y$  over $Q_T$   - $r=1$ and $h=8.83\times 10^{-3}$.}\label{fig:stateh_0205}
\end{center}
\end{figure}

\begin{figure}[http]
\begin{center}
\includegraphics[scale=0.6]{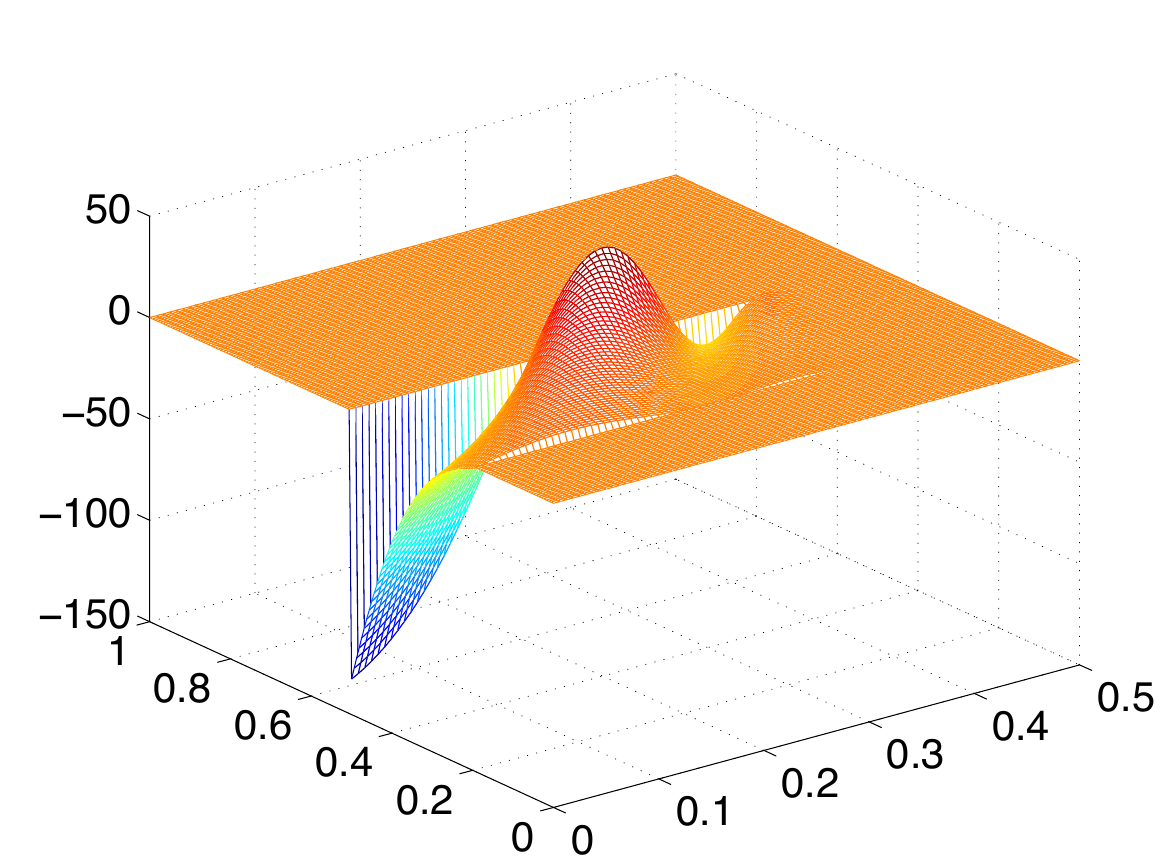}
\caption[Approximation of the null control]{$\omega=(0.2,0.5)$; Approximation $v_h=\rho_0^{-1}\psi_h$ of the null control  $v$  over $Q_T$   - $r=1$ and $h=8.83\times 10^{-3}$.}\label{fig:conth_0205}
\end{center}
\end{figure}

The experiments reported here - in the limit case $\eps=0$ - are obtained for a specific choice of the weights $\rho_0$ and $\rho$. Precisely, the weight $\rho_0$ is such that the approximation $v_h:=\rho_0^{-2}\ph_h \,1_{\omega}$ vanishes exponentially as $t\to T^{-}$. This allows in particular to avoid the high oscillatory behavior of the control of minimal $L^2$-norm, that is when $\rho_0:=1$ in $q_T$. The exponential behavior of the control implies a similar behavior of the corresponding controlled stated   $\rho^{-1}\lambda$, so that the choice of the parameter $\rho$ made here, is also natural. Remark that $\rho$ is not bounded and therefore does not strictly satisfied the hypothesis of Theorem \ref{th:mf3}. Seemingly, this has no influence at the numerical level.   
This specific choice of the parameter $\rho$ allows to perform a change of variable and therefore reduce significantly the condition number of the discrete problem. We also point out that, if the mixed formulation (\ref{eq:mf3}) is well-posed for any $\rho,\rho_0$ satisfying the hypothesis of Theorem \ref{th:mf3}, the constant of continuity of the linear form $\tilde{l}$ depends strongly - in view of the Carleman estimate (\ref{crucial_estimate}) -  of $\rho$ and $\rho_0$. This affects the convergence and the robustness of the method. Thus, for $\rho_0$ as before and $\rho:=1$, the 
condition number is too large for small values of $h$ (typically $h\approx 10^{-3}$) and leads to wrong results. Remark that for $\rho:=1$, the exponential decreases of $\rho_0^{-1}$ cannot be compensated by $\rho$ (see  (\ref{crucial_estimate})) so that the change of variable is inefficient.

\section{Concluding remarks and Perspectives}
\label{concluding_remarks}

The mixed formulation we have introduced here in order to address the null controllability of the heat equation seems original and adapted the work \cite{NC-AM-mixedwave} devoted to the wave equation. This formulation is nothing else than the Euler system associated to  the conjugate functional and depends on both the dual adjoint variable and a Lagrange multiplier, which turns out to be the primal state of the heat equation to be controlled. The approach, recently used in a different way in \cite{EFC-AM-sema}, leads to a variational problem defined over time-space functional Hilbert spaces, without distinction between the time and the space variable. The main ingredients allowing to prove the well-posedness of the mixed formulation are an observability inequality and a direct inequality (usually deduced from energy estimates). For these reasons, the mixed reformulation may also be employed to any other controllable systems for which such inequalities are available. In particular, we may consider the Stokes system as in \cite{munch_MCSS2014}. 

At the practical level, the discrete mixed time-space formulation is solved in a systematic way in the framework of the finite element theory: in contrast to the classical approach initially developed in \cite{GL96}, there is no need to take care of the time discretization nor of the stability of the resulting scheme, which is often a delicate issue.  The resolution amounts to solve a sparse symmetric linear system : the corresponding matrix can be preconditioned if necessary, and may be computed once for all as it does not depend on the initial data to be controlled. Eventually, as discussed in \cite{NC-AM-mixedwave}, Section 4.3 (but not employed here), the space-time discretization of the domain allows an adaptation of the mesh so as to reduce the computational cost and capture the main features of the solutions.  We also emphasize that the higher dimensional case is very similar as it requires $C^1$ approximation in space.

The numerical experiments reported in this work suggest a very good behavior of the approach: the strong convergence of the sequences $\{v_h\}_{h>0}$, approximation of the controls of minimal weighted square integrable norm, are clearly observed as the discretization parameter $h$ tends to zero (as the consequence of the uniform inf-sup discrete property). It is worth to mention that, within this mixed formulation approach, the strong convergence of the approximations  (as obtained within a closed but different approach in \cite{EFC-AM-sema} assuming that the weights $\rho_0$ and $\rho$ coincide with the Carleman weight) is still to be done. From the uniform coercivity of the bilinear form in the primal variable, a strong convergence is guaranteed by a uniform discrete inf-sup property. In view of the complicated and unusual constraint $L^\star \ph=0$ and of the $C^1$ nature of the approximation, the proof of such uniform property is probably very hard to get. However, it seems possible to bypass this property by adding to the Lagrangian the stabilization terms (for instance in the limit case $\eps=0$)
\[
-\Vert L(\rho^{-1} \lambda_h) - \rho_0^{-2} \ph_h \,1_{\omega} \Vert^2_{L^2(Q_T)}, \quad -\Vert \lambda_h(\cdot,0)-y_0\Vert^2_{L^2(0,1)} 
\]
which vanish at the continuous level (writing $L y=v\,1_{\omega}$ with $y=\rho^{-1}\lambda$ and $v=\rho^{-2}\ph 1_{\omega}$, see Theorem \ref{th:mf3}) and give coercivity property for the variable $\lambda_h$. This will be examined in a future work.

The approach may also be extended to the boundary case. We also mention that the variational approach developed here based on a space-time formulation is also very well-adapted to the case where the support of the inner control evolves in time and takes the form
\[
q_T:= \{(x,t)\in Q_T; \quad a(t)<  x < b(t) \quad t\in (0,T)\}
\]
with any $a,b$ in  $C^0([0,T], ]0,1[)$. We refer to \cite{CC-NC-AM} which examines this case for the wave equation. 

Eventually, we also mention that this approach which consists in solving directly the optimality conditions of a controllability problem may be employed to solve inverse problems 
where, for instance, the solution of the heat equation has to be recovered from a partial observation, typically localized on a sub-domain $q_T$ of the working domain: actually, the optimality conditions associated to a least-square type functional can be expressed as a mixed formulation very closed to (\ref{th:mf3}). 
This issue will be analyzed in a future work.

\section{Appendix}

\subsection{Appendix : Fourier expansion of the control of minimal $L^2(\rho_0,q_T)$ norm.}

We expand in term of Fourier series the control of minimal $L^2(\rho_0, q_T)$ norm $v$ for the (\ref{eq:heat}) and the corresponding controlled solution $y$. We use these expansions in Section \ref{eq:mf1_numer} to evaluate with respect to $h$ the error $\Vert y_{\eps}-\lambda_{\eps,h}\Vert_{L^2(Q_T)}$ and $\Vert \rho_0 (v_{\eps}-v_{\eps,h})\Vert_{L^2(q_T)}$ where the sequence $(\ph_{\eps,h},\lambda_{\eps,h})$ solves the discrete mixed formulation (\ref{eq:mf1h}). We use the characterization of the couple $(y_{\eps},v_{\eps})$ in term of the adjoint solution $\ph_{\eps}$ (see \eqref{eq:wr}), unique minimizer in $L^2(\Omega)$ of $J^{\star}_{\eps}$ defined by (\ref{eq:min_eps_adj}).

We first note $(a_{\eps,p})_{(p>0)}$ the Fourier coefficients in $l^2(\mathbb{N})$ of the minimizer $\ph_{T,\eps}\in L^2(0,1)$ of (\ref{eq:min_eps_adj}) such that 
\begin{equation}
\ph_{\eps,T}(x)=\sum_{p>0} a_{\eps,p}\, \sin(p\pi x), \quad x\in (0,1).   \label{ph_fourier}
\end{equation}
The adjoint state takes the form $\ph_{\eps}(x,t)=\sum\limits_{p\geq1}a_{\eps,p} e^{c\pi^2p^2(t-T)}\sin (p\pi x)$  in $Q_T$. 

The optimality equation associated to the functional 
$J_{\eps}^{\star}$ then reads, 
\begin{equation}
DJ^{\star}_{\eps}(\ph_{\eps,T})\cdot \overline{\ph_T} = \jjntqT \rho_0^{-2}\ph_{\eps}\overline{\ph} \,dx\,dt + \eps \int_0^1 \ph_{\eps,T}\overline{\ph_T} + (y_0, \overline{\ph}(\cdot,0))=0, \forall \overline{\ph_T} \in L^2(0,1)  \nonumber
\end{equation}
and can be rewritten in terms of the $(a_{\eps,p})_{p>0}$ as follows : 
\begin{equation} 
<
\{\overline{a_p}\}_{p>0},
\mathcal{M}_{q_T,\eps} \{a_p\}_{p>0}   > =
< \{\overline{a_p}\}_{p>0} , \mathcal{F}_{y_0}> \quad \forall  \overline{a}_{\eps,p}\in l^2(\mathbb{N})\label{eq_apaq}
\end{equation}
where $\mathcal{M}_{q_T,\eps}$ denotes a symmetric positive definite matrix and $\mathcal{F}_{y_0}$ a vector obtained from the expansion (\ref{ph_fourier}). The resolution of the infinite dimensional system (reduced to a finite dimension one by truncation of the sums) allows an approximation of the minimizer $\ph_{T,\eps}$ of $J^{\star}_{\eps}$.

Finally, we use that the control of minimal $L^2(\rho_0,q_T)$
norm is given by $v_{\eps}= \rho_0^{-2}\ph_{\eps}\,1_{\omega}$ and find that the corresponding controlled solution may be expanded as follows  
\begin{equation}
y_{\eps}(x,t)=\sum_{q>0} \biggl(e^{-c\pi^2 q^2t}b^0_q +    \sum_{p\geq1}a_{\eps,p}  c_{q,p}(\omega) d_{q,p}(t)]    \biggr) \sin(p\pi x), \quad (x,t)\in Q_T \label{y_fourier}
\end{equation}
 with 
$$	
c_{p,q}(\omega)\!:=\!2\!\int_{\omega} \!\sin(p\pi x)\sin(q\pi x)\,dx; \,~  d_{p,q}(t):=\int_0^t \!\!\rho_0^{-2}(s)e^{c\pi^2(p^2(s-T)+q^2(s-t))} \,ds,\,~ t\in (0,T). \nonumber
$$ 
$(b^0_q)_{q>0}$ denotes the Fourier coefficients of the initial data $y_0\in L^2(0,1)$.

\subsection{Appendix: Tables}

\begin{table}[http]
\centering
\scalebox{0.95}{
\begin{tabular}{|c|ccccc|}
\hline 
$h$ & $1.41\times 10^{-1}$ & $7.07\times 10^{-2}$ & $3.53\times 10^{-2}$ & $1.76\times 10^{-2}$ & $8.83\times 10^{-3}$ \tabularnewline
\hline

$\|L^{\star}\ph_{\eps,h}\|_{L^2(Q_T)}$ & $3.84\times 10^{-2}$ & $2.90\times 10^{-2}$ & $9.27\times 10^{-3}$ & $2.41\times 10^{-3}$ & $7.78\times 10^{-4}$\tabularnewline

$\frac{\|\rho_0 (v_{\eps} - v_{\eps,h})\|_{L^2(q_T)}}{\Vert \rho_0 v_{\eps} \Vert_{L^2(q_T)}}$ &  $1.32\times 10^{-1}$ &  $5.90\times 10^{-2}$ &  $3.24\times 10^{-2}$ &   $1.68\times 10^{-2}$ &  $8.57\times 10^{-3}$ \tabularnewline

$\frac{\|y_{\eps} - \lambda_{\eps,h}\|_{L^2(Q_T)}}{\Vert y_{\eps} \Vert_{L^2(Q_T)}}$ & $1.04\times 10^{-1}$ & $3.54\times 10^{-2}$ & $1.48\times 10^{-2}$ & $7.59\times 10^{-3}$ & $3.89\times 10^{-3}$  \tabularnewline

$\|\lambda_{\eps,h}(\cdot,T)\|_{L^2(0,1)}$ & $2.02\times 10^{-1}$ & $1.68\times 10^{-1}$ & $1.65\times 10^{-1}$ & $1.67\times 10^{-1}$ & $1.68\times 10^{-1}$  \tabularnewline

$\kappa_{\eps}$ & $4.44\times 10^{9}$ & $4.20\times 10^{11}$ & $3.84\times 10^{13}$ & $3.25\times 10^{15}$ & $5.72\times 10^{16}$  \tabularnewline

\hline
\end{tabular}}
\caption[Mixed formulation \eqref{eq:mf1} --  $r=10^2$ and $\eps=10^{-2}$]{Mixed formulation (\ref{eq:mf1}) -  $r=10^2$ and $\eps=10^{-2}$ with  $\om=(0.2,0.5)$.}
\label{tab:ex_r100_e02_0205}
\end{table}

\begin{table}[http]
\centering
\scalebox{0.95}{
\begin{tabular}{|c|ccccc|}
\hline 
$h$ & $1.41\times 10^{-1}$ & $7.07\times 10^{-2}$ & $3.53\times 10^{-2}$ & $1.76\times 10^{-2}$ & $8.83\times 10^{-3}$ \tabularnewline
\hline

$\|L^{\star}\ph_{\eps,h}\|_{L^2(Q_T)}$ & $6.19\times 10^{-2}$ & $1.57\times 10^{-1}$ & $1.56\times 10^{-1}$ & $1.50\times 10^{-1}$ & $6.21\times 10^{-2}$\tabularnewline


$\frac{\|\rho_0 (v_{\eps} - v_{\eps,h})\|_{L^2(q_T)}}{\Vert \rho_0 v_{\eps} \Vert_{L^2(q_T)}}$ &  $1.02$ &  $7.36\times 10^{-1}$ &  $3.65\times 10^{-1}$ &   $1.52\times 10^{-1}$ &  $3.01\times 10^{-2}$ \tabularnewline


$\frac{\|y_{\eps} - \lambda_{\eps,h}\|_{L^2(Q_T)}}{\Vert y_{\eps} \Vert_{L^2(Q_T)}}$ & $6.74\times 10^{-1}$ & $5.51\times 10^{-1}$ & $2.42\times 10^{-1}$ & $1.05\times 10^{-1}$ & $1.81\times 10^{-2}$  \tabularnewline


$\|\lambda_{\eps,h}(\cdot,T)\|_{L^2(0,1)}$ & $2.23\times 10^{-1}$ & $1.76\times 10^{-1}$ & $7.86\times 10^{-2}$ & $4.87\times 10^{-2}$ & $3.28\times 10^{-2}$  \tabularnewline

$\kappa_{\eps}$ & $5.31\times 10^{9}$ & $8.31\times 10^{11}$ & $9.64\times 10^{13}$ & $1.47\times 10^{16}$ & $1.50\times 10^{18}$  \tabularnewline

\hline
\end{tabular}}
\caption[Mixed formulation \eqref{eq:mf1} --  $r=10^2$ and $\eps=10^{-3}$]{Mixed formulation (\ref{eq:mf1}) -  $r=10^2$ and $\eps=10^{-4}$ with  $\om=(0.2,0.5)$.}
\label{tab:ex_r100_e04_0205}
\end{table}

\begin{table}[t]
\centering
\scalebox{0.95}{
\begin{tabular}{|c|ccccc|}
\hline 
$h$ & $1.41\times 10^{-1}$ & $7.07\times 10^{-2}$ & $3.53\times 10^{-2}$ & $1.76\times 10^{-2}$ & $8.83\times 10^{-3}$ \tabularnewline
\hline

$\|L^{\star}\ph_{\eps,h}\|_{L^2(Q_T)}$ & $6.23\times 10^{-2}$ & $1.63\times 10^{-1}$ & $1.77\times 10^{-1}$ & $2.66\times 10^{-1}$ & $2.24\times 10^{-1}$\tabularnewline


$\frac{\|\rho_0 (v_{\eps} - v_{\eps,h})\|_{L^2(q_T)}}{\Vert \rho_0 v_{\eps} \Vert_{L^2(q_T)}}$ &  $1.50$ &  $1.11$ &  $9.53\times 10^{-1}$ &   $8.33\times 10^{-1}$ &  $7.19\times 10^{-1}$ \tabularnewline


$\frac{\|y_{\eps} - \lambda_{\eps,h}\|_{L^2(Q_T)}}{\Vert y_{\eps} \Vert_{L^2(Q_T)}}$ & $1.08 $ & $1.09$ & $9.4\times 10^{-1}$ & $7.69\times 10^{-1}$ & $5.15\times 10^{-1}$  \tabularnewline


$\|\lambda_{\eps,h}(\cdot,T)\|_{L^2(0,1)}$ & $2.24\times 10^{-1}$ & $1.79\times 10^{-1}$ & $8.10\times 10^{-2}$ & $5.67\times 10^{-2}$ & $1.71\times 10^{-2}$  \tabularnewline

$\kappa_{\eps}$ & $5.32\times 10^{9}$ & $8.59\times 10^{11}$ & $9.86\times 10^{13}$ & $1.84\times 10^{16}$ & $3.07\times 10^{18}$  \tabularnewline

\hline
\end{tabular}}
\caption[Mixed formulation \eqref{eq:mf1} --  $r=10^2$ and $\eps=10^{-8}$]{Mixed formulation (\ref{eq:mf1}) -  $r=10^2$ and $\eps=10^{-8}$ with  $\om=(0.2,0.5)$.}
\label{tab:ex_r100_e08_0205}
\end{table}

\begin{table}[t]
\centering
\scalebox{0.95}{
\begin{tabular}{|c|ccccc|}
\hline 
$h$ & $1.41\times 10^{-1}$ & $7.07\times 10^{-2}$ & $3.53\times 10^{-2}$ & $1.76\times 10^{-2}$ & $8.83\times 10^{-3}$ \tabularnewline
\hline

$\|L^{\star}\ph_{\eps,h}\|_{L^2(Q_T)}$ & $2.86\times 10^{-1}$ & $7.15\times 10^{-2}$ & $1.84\times 10^{-2}$ & $4.86\times 10^{-3}$ & $1.40\times 10^{-3}$\tabularnewline


$\frac{\|\rho_0 (v_{\eps} - v_{\eps,h})\|_{L^2(q_T)}}{\Vert \rho_0 v_{\eps} \Vert_{L^2(q_T)}}$ &  $1.11\times 10^{-1}$ &  $6.21\times 10^{-2}$ &  $3.29\times 10^{-2}$ &   $1.68\times 10^{-2}$ &  $8.57\times 10^{-3}$ \tabularnewline


$\frac{\|y_{\eps} - \lambda_{\eps,h}\|_{L^2(Q_T)}}{\Vert y_{\eps} \Vert_{L^2(Q_T)}}$ & $5.16\times 10^{-2}$ & $2.84\times 10^{-2}$ & $1.48\times 10^{-2}$ & $7.59\times 10^{-3}$ & $3.89\times 10^{-3}$  \tabularnewline


$\|\lambda_{\eps,h}(\cdot,T)\|_{L^2(0,1)}$ & $1.53\times 10^{-1}$ & $1.61\times 10^{-1}$ & $1.65\times 10^{-1}$ & $1.67\times 10^{-1}$ & $1.68\times 10^{-1}$  \tabularnewline

$\kappa_{\eps}$ & $9.15\times 10^{8}$ & $2.07\times 10^{10}$ & $8.05\times 10^{11}$ & $3.25\times 10^{13}$ & $1.45\times 10^{15}$  \tabularnewline

\hline
\end{tabular}}
\caption[Mixed formulation \eqref{eq:mf1} --  $r=10^{-2}$ and $\eps=10^{-2}$]{Mixed formulation (\ref{eq:mf1}) -  $r=10^{-2}$ and $\eps=10^{-2}$ with  $\om=(0.2,0.5)$.}
\label{tab:ex_r001_e02_0205}
\end{table}

\begin{table}[t]
\centering
\scalebox{0.95}{
\begin{tabular}{|c|ccccc|}
\hline 
$h$ & $1.41\times 10^{-1}$ & $7.07\times 10^{-2}$ & $3.53\times 10^{-2}$ & $1.76\times 10^{-2}$ & $8.83\times 10^{-3}$ \tabularnewline
\hline

$\|L^{\star}\ph_{\eps,h}\|_{L^2(Q_T)}$ & $10.77$ & $3.821$ & $1.018$ & $2.59\times 10^{-1}$ & $6.56\times 10^{-2}$\tabularnewline


$\frac{\|\rho_0 (v_{\eps} - v_{\eps,h})\|_{L^2(q_T)}}{\Vert \rho_0 v_{\eps} \Vert_{L^2(q_T)}}$ &  $4.63\times 10^{-1}$ &  $2.23\times 10^{-1}$ &  $1.10\times 10^{-1}$ &   $5.52\times 10^{-2}$ &  $2.74\times 10^{-2}$ \tabularnewline


$\frac{\|y_{\eps} - \lambda_{\eps,h}\|_{L^2(Q_T)}}{\Vert y_{\eps} \Vert_{L^2(Q_T)}}$ & $1.55\times 10^{-1}$ & $9.03\times 10^{-2}$ & $4.08\times 10^{-2}$ & $2.46\times 10^{-2}$ & $1.27\times 10^{-2}$  \tabularnewline


$\|\lambda_{\eps,h}(\cdot,T)\|_{L^2(0,1)}$ & $3.22\times 10^{-2}$ & $2.85\times 10^{-2}$ & $2.99\times 10^{-2}$ & $3.08\times 10^{-2}$ & $3.12\times 10^{-2}$  \tabularnewline

$\kappa_{\eps}$ & $3.04\times 10^{9}$ & $1.33\times 10^{11}$ & $7.55\times 10^{12}$ & $3.88\times 10^{14}$ & $1.96\times 10^{16}$  \tabularnewline

\hline
\end{tabular}}
\caption[Mixed formulation \eqref{eq:mf1} --  $r=10^{-2}$ and $\eps=10^{-4}$]{Mixed formulation (\ref{eq:mf1}) -  $r=10^{-2}$ and $\eps=10^{-4}$ with  $\om=(0.2,0.5)$.}
\label{tab:ex_r001_e04_0205}
\end{table}

\begin{table}[t]
\centering
\scalebox{0.915}{
\begin{tabular}{|c|ccccc|}
\hline 
$h$ & $1.41\times 10^{-1}$ & $7.07\times 10^{-2}$ & $3.53\times 10^{-2}$ & $1.76\times 10^{-2}$ & $8.83\times 10^{-3}$ \tabularnewline
\hline

$\|L^{\star}\ph_{\eps,h}\|_{L^2(Q_T)}$ & $21.872$ & $19.388$ & $26.098$ & $28.310$ & $21.249$\tabularnewline

$\|\rho_0 (v_{\eps} - v_{\eps,h})\|_{L^2(q_T)}$ &  $14.989$ &  $9.459$ &  $6.606$ &  $4.175$ &  $1.556$ \tabularnewline

$\frac{\|\rho_0 (v_{\eps} - v_{\eps,h})\|_{L^2(q_T)}}{\Vert \rho_0 v_{\eps} \Vert_{L^2(q_T)}}$ &  $1.33$ &  $8.43\times 10^{-1}$ &  $5.89\times 10^{-1}$ &   $3.72\times 10^{-1}$ &  $1.38\times 10^{-1}$ \tabularnewline


$\frac{\|y_{\eps} - \lambda_{\eps,h}\|_{L^2(Q_T)}}{\Vert y_{\eps} \Vert_{L^2(Q_T)}}$ & $5.73\times 10^{-1}$ & $4.71\times 10^{-}$ & $3.51\times 10^{-1}$ & $2.11\times 10^{-1}$ & $6.82\times 10^{-2}$  \tabularnewline


$\|\lambda_{\eps,h}(\cdot,T)\|_{L^2(0,1)}$ & $3.31\times 10^{-2}$ & $1.31\times 10^{-2}$ & $5.99\times 10^{-3}$ & $2.83\times 10^{-3}$ & $8.26\times 10^{-4}$  \tabularnewline

$\kappa_{\eps}$ & $4.08\times 10^{9}$ & $3.04\times 10^{11}$ & $4.54\times 10^{13}$ & $6.79\times 10^{15}$ & $1.30\times 10^{18}$  \tabularnewline

\hline
\end{tabular}}
\caption[Mixed formulation \eqref{eq:mf1} --  $r=10^{-2}$ and $\eps=10^{-8}$]{Mixed formulation (\ref{eq:mf1}) -  $r=10^{-2}$ and $\eps=10^{-8}$ with  $\om=(0.2,0.5)$.}
\label{tab:ex_r001_e08_0205}
\end{table}

\

\noindent\textbf{Acknowledgments:} This work has been partially done while the second author was visiting the 
		Blaise Pascal University (Clermont-Ferrand, France). 
		He wishes to thank the members of the Laboratory of Mathematics for their kind hospitality.


\end{document}